\newcommand{\eref}[1]{(\ref{#1})}
\newcommand{\R}{\mathbb{R}}
\newcommand{\hh}{{{\dot{\mathrm{H}}^{s_p}}\times{\dot{\mathrm{H}}^{s_p-1}}}}
\newcommand{\tpu}{{T_+((u_0,u_1))}}
\newcommand{\tmu}{{T_-((u_0,u_1))}}
\newcommand{\rr}{_{rad}}
\newcommand{\supp}{\mathrm{supp}\,}
\newcommand{\n}[1]{\left\Vert#1\right\Vert} 
\newcommand{\nl}[2]{\left\Vert#1\right\Vert_{L_x^{#2}}} 
\newcommand{\nh}[1]{\left\Vert#1\right\Vert_{\dot H^{s_p}}}
\newcommand{\nH}[1]{\left\Vert#1\right\Vert_{\dot H^{s_p-1}}}
\newcommand{\nhh}[2]{{\left\Vert\left(#1,#2\right)\right\Vert_{\hh}}} 
\newcommand{\nhhx}[1]{{\left\Vert#1\right\Vert_{\hh}}} 
\newcommand{\nll}[3]{\left\Vert#1\right\Vert_{L_t^{#2}L_x^{#3}}} 
\newcommand{\nlli}[3]{\left\Vert#1\right\Vert_{L_I^{#2}L_x^{#3}}} 
\newcommand{\nllij}[3]{\left\Vert#1\right\Vert_{L_{I_j}^{#2}L_x^{#3}}} 
\newcommand{\ns}[2]{\left\Vert#1\right\Vert_{S_p\left(#2\right)}} 
\newcommand{\nsi}[1]{\left\Vert#1\right\Vert_{S_p(I)}} 
\newcommand{\nw}[2]{\left\Vert#1\right\Vert_{W\left(#2\right)}} 
\newcommand{\nwi}[1]{\left\Vert#1\right\Vert_{W(I)}} 
\newcommand{\scl}[5]{{\frac{1}{{#3}^{#2}}#1\left(\frac{#4}{#3},#5\right)}} 
\newcommand{\scll}[3]{{\frac{1}{{\lambda(t)}^{#2}}#1\left(\frac{#3}{\lambda(t)},t\right)}} 
\newcommand{\scllb}[3]{{\frac{1}{{\lambda(t)}^{#2+1}}\partial_t#1\left(\frac{#3}{\lambda(t)},t\right)}} 
\newcommand{\Scl}[5]{{\frac{1}{{#3}^{#2}}#1\left(\frac{#4}{#3},\frac{#5}{#3}\right)}} 
\newcommand{\partila}{\partial}
\newtheorem{thm}{Theorem}[section]
\newtheorem{lem}[thm]{Lemma}
\newtheorem{pp}[thm]{Proposition}
\newtheorem{cor}[thm]{Corollary}
\theoremstyle{remark}
\newtheorem{rem}[thm]{Remark}
\theoremstyle{definition}
\newtheorem{dfn}[thm]{Definition}
\numberwithin{equation}{section}
\title{Nondispersive radial solutions to energy supercritical 
non-linear wave equations, with applications\footnote{2000 MSC number 35L70}}
\author{Carlos E. Kenig\footnote{The first author was supported in part by NSF 
grant DMS-0456583.}\\ \\
Department of Mathematics\\
University of Chicago\\
Chicago, IL 60637\\
USA\\
cek@math.uchicago.edu \\ \\
Frank Merle\footnote{The second one was supported in part by CNRS. Part of this 
research was carried out during visits of the second author to the University 
of Chicago and IHES. This research was also supported in part by 
ANR ONDENONLIN.}\\
Departement de Mathematiques\\
Universit\'e de Cergy-Pontoise\\
Pontoise\\
95302 Cergy-Pontoise\\
FRANCE\\
frank.merle@math.u-cergy.fr}
\date{}
\begin{document}
\maketitle

\section{Introduction}

In this paper,
we 
consider solutions $u$ to 
\begin{equation}\label{w}
\left\{\begin{array}{l} (\partial_t^2-\triangle)u\pm|u|^{p-1}u=0\quad\quad(x,t)\in\R^3\times\R\\ \\
u\big|_{t=0}=u_0\\ \\
\partial_t u\big|_{t=0}=u_1
\end{array}\right.
\end{equation}
in the range $p\ge5$. We deal with $(u_0,u_1)$ in the scale invariant space $\dot{\mathrm{H}}^{s_p}\times$ $\dot{\mathrm{H}}^{s_p-1}$, $s_p=\frac{3}{2}-\frac{2}{p-1}$. 
(See Definition \ref{d2.10} for the precise definition of solution that we use). We only consider the case when $(u_0,u_1)$ and $u$ are radial. 

We first obtain a pointwise decay estimate for compact radial solutions
to energy critical  and supercritical non-linear wave equations.
We say that $u$ has the ``compactness property'' if it is defined for $t\in(-\infty,+\infty)$ and there exists $\lambda(t)\geq A_0>0$, 
$t\in(-\infty,+\infty)$ so that, for
\begin{displaymath}
K=\left\{\left(\frac{1}{\lambda(t)^{\frac{2}{p-1}}}u\left(\frac{x}{\lambda(t)},t\right),
               \frac{1}{\lambda(t)^{\frac{2}{p-1}+1}}\partial_t u\left(\frac{x}{\lambda(t)},t\right)\right)\;:\;t\in(-\infty,+\infty)\right\}
\end{displaymath}
 $\overline K$ is compact in $\hh$. Our decay estimate then is  (Theorem \ref{t3.2} and Corollary \ref{c3.7} ): There exist $C_0>0$, $r_0>1$ such that, for all 
$t\in\R$, $|x|\geq r_0$, we have 
\begin{equation}\label{e}
\begin{array}{c}
\displaystyle|u(x,t)|\leq \frac{C_0}{|x|}, \\
\displaystyle(\int_{|y|\ge|x|}|\nabla u(y,t)|^2 dy)^{\frac12} 
+|x|^{(p-3)}(\int_{|y|\ge|x|}|\partial_t u(y,t)|^2 dy)^{\frac12}\leq \frac{C_0}{|x|^{\frac12}}.
\end{array}
\end{equation}

Note that, for $p\ge5$,  this estimate ``breaks the scaling''. Note also that it is valid in both the focusing and 
defocusing cases (the $\pm$ signs in (\ref{w})). 
Also, since $W(x)=\left(1+|x|^2/3\right)^{-1/2}$ is a solution in the energy critical
focusing case ($p=5$, $-$ in (\ref{w})), which clearly has $(W(x),0)$ verifying the ``compactness property'', the estimate (\ref{e}) is optimal in the 
range $p\geq5$.

The proof of (\ref{e}) is accomplished by observing that, with $r=|x|$, $w(r,t)=|x|u(x,t)$ solves a non-linear wave equation in one space variable 
\eref{e3.6} and exploits the vector fields $\partial_r+\partial_t$, $\partial_r-\partial_t$ (see Lemma \ref{l3.11}). After this, a ``convexity argument'' using the linear wave equation (Lemma \ref{l3.16})  and an iteration give (\ref{e}).

As a corollary, the solution $u$ which satisfies the compactness property has to be identical to zero in the defocusing situation, for $p\ge5$. 

The importance of solutions satisfying the ``compactness property'' in critical non-linear dispersive and wave equations is by now well established. 
It is  also well understood that estimates like (\ref{e}) are fundamental to obtain,  ``rigidity theorems''. To our knowledge, this is the 
first example of such an estimate in the energy supercritical setting, for non-linear dispersive or wave equations.

Instances where solutions to (\ref{w}) with the ``compactness property'' have been important are, for example, in the study of global well-posedness and 
scattering in both focusing and defocusing cases. In fact, the concentration-compactness/rigidity theorem method that we introduced in \cite{KM1} to study 
global well-posedness and scattering for the focusing, energy critical non-linear Schr\"o\-din\-ger equation in the radial case and which we also applied in 
non-radial settings to the energy critical, focusing nonlinear wave equation  in \cite{KM2} and to $\dot{H}^{1/2}$ bounded solutions of the defocusing 
cubic non-linear Schr\"o\-din\-ger equation in $\R^3$ \cite{KM3}, has, as a major component, a reduction to a ``rigidity theorem'' for solutions having the 
``compactness property''. (The concentration compactness/rigidity theorem method that we introduced in \cite {KM1} has had a wide range of 
applicability by several authors. For instance, it was applied to the ``mass-critical'' NLS in the radial case, in dimension $d\geq2$, in both focusing and 
defocusing cases, in works of Killip, Tao, Vi\c san and Zhang, \cite{TVZ}, \cite{KTV}, \cite{KVZ}, to the focusing cubic NLS for $d=3$, by 
Holmer--Roudenko \cite{HR} and Duyckaerts--Holmer--Roudenko \cite{DHR}, to corotational wave maps into $S^2$ and to $d=4$ Yang--Mills in the radial case, 
in work of C\^ote--Kenig--Merle \cite{CKM}, to the energy critical, focusing NLS for $d\geq5$, in the non-radial case, by Killip--Vi\c san \cite{KV} and 
recently in the work of Tao \cite{T1,T2} on global solutions of the wave map system from $\R^2$ into the hyperbolic plane $H^2$). 
Note that after a first version of our paper was posted in the ArXiv in
October $2008$ ($arXiv:0810.4834$) and motivated by this, Killip and Visan
($arXiv:0812.2084$) have posted corresponding results for defocusing NLS in
dimensions 5 and higher, without radial assumptions, obtained using the
proof of the decay estimate for nondipersive solutions to NLS in high
dimensions that they had previously obtained in their work \cite{KV} in the
energy critical setting in dimensions 5 and higher.

In the last three sections of the paper, we apply estimate (\ref{e}) to radial solutions of (\ref{w}) in the defocusing case , for the energy 
supercritical case $p>5$. We apply our concentration-compactness/ rigidity theorem method, using crucially estimate (\ref{e}), 
to show, (in the spirit of our work \cite{KM3}) that if 
\begin{displaymath}
\sup_{0<t<\tpu}\nhh{u(t)}{\partial_t u(t)}<\infty,
\end{displaymath} 
where $\tpu$ is the ``final  time of existence'' (see Definition \ref{d2.10}), then $\tpu=+\infty$ and $u$ scatters at $+\infty$ 
(see Remark \ref{r2.12} for a definition of scattering). Thus, if $\tpu<+\infty$, we must have 
\begin{equation}\label{e1.3}
\limsup_{0<t<\tpu}\nhh{u(t)}{\partial_tu(t)}=+\infty.
\end{equation}
Note that this is similar to the result in \cite{KM3} and also to the $L^{3,\infty}$ result of Escauriaza--Seregin--\v Sverak \cite{ESS} for 
Navier--Stokes. 
Note that this type of result for the defocusing energy critical case ($p=5$) has a long history. In fact, in this case, the analog of (1.3) always holds and is a consequence of the conservation of energy. For $p=5$, Struwe \cite{Sm} in the radial case and Grillakis \cite{Gm} in the general case, showed that, for regular data, $T_+((u_0,u_1))<+\infty$ is impossible, and Shatah-Struwe \cite{SS2m}, \cite{SS3m} extended this to global well-posedness and preservation of higher regularity for data in $\dot{\mathrm{H}}^{1}\times L^2$, while Bahouri-Shatah \cite{BSm} establish scattering for such data. These result are based on the facts that, for small local energy data, one has global existence and that local energy concentration is excluded from the Morawetz identity \cite{Mm}. The key point here is that both the Morawetz identity and the energy have the same scaling, which is also the scaling of the critical well-posedness space $\dot{\mathrm{H}}^{1}\times L^2$. This point is not available in the energy supercritical case and we are thus forced to proceed differently.

Our proof of the application reduces maters to establishing a ``rigidity theorem'' (Section 4).
In Section 5, we establish the ``rigidity theorem'' for solutions with $T_+(u_0,u_1)$ $<\infty$ (where we follow ideas in \cite{KM1}, \cite{KM2}) and for 
solutions with the ``compactness property''. Here, the decay estimate \eref{e} is fundamental to allow us to use  ideas in \cite{KM2}. Finally, 
in Section 6 we present a general argument (in the spirit of \cite{MM1,MM2}), which shows that the general ``rigidity theorem'' is a consequence of 
the special cases proved in Section 5.

We expect that  estimates in the spirit of \eref{e} will continue to have crucial applications to \eref{w}. Further applications and extensions to 
higher dimensions will appear in future publications.

Acknowledgement: We are grateful to Chengbo Wang for pointing out an error in our statement and application of the Hardy-Littlewood-Sobolev embedding in a previous version of this paper.

\section{The Cauchy Problem}

In this section we will sketch the theory of the local Cauchy problem
\begin{equation}\label{cp}
\left\{\begin{array}{l} (\partial_t^2-\triangle)u \ + \ \mu |u|^{p-1}u=0\quad\quad(x,t)\in\R^3\times\R\\ \\
u\big|_{t=0}=u_0\in\dot H^{s_p}\\ \\
\partial_t u\big|_{t=0}=u_1\in\dot H^{s_p-1}
\end{array}\right.
\end{equation}
where $\mu=1$ (defocusing) or $\mu=-1$ (focusing) and
$$s_p=\frac{3}{2}-\frac{2}{p-1},$$ 
which is the critical index for \eref{cp}. We will concentrate in the energy suprecritical case, $5<p$, with 
the energy critical case $p=5$ being covered in various places, in particular in \cite{KM2}, where references are also given. We say that \eref{cp} 
is $\hh$ critical, because if $u$ is a solution of \eref{cp} and $\lambda>0$, by scaling
$u_\lambda(x,t)=\frac{1}{\lambda^{2/p-1}}u\left(\frac{x}{\lambda},\frac{t}{\lambda}\right)$ is also a solution, with initial data
$(u_{0,\lambda}(x),u_{1,\lambda}(x))   =   \frac{1}{\lambda^{2/p-1}} ( u_0\left(\frac{x}{\lambda}\right), \frac{1}{\lambda}u_1\left(\frac{x}{\lambda}\right))$,
and we have
\begin{displaymath}
\nhh{u_{0,\lambda}}{u_{1,\lambda}}=\nhh{u_{0}}{u_{1}}.
\end{displaymath}

We will start out with some preliminary results that are needed for the theory of the local Cauchy problem.

\begin{lem}[Strichartz estimates \cite{GV}]\label{l2.2}
Consider $w(x,t)$ the solution of the linear Cauchy problem
\begin{equation}\label{e2.3}
\left\{\begin{array}{l} (\partial_t^2-\triangle)w=h\quad\quad(x,t)\in\R^3\times\R\\ \\
w\big|_{t=0}=w_0\\ \\
\partial_t w\big|_{t=0}=w_1
\end{array}\right.
\end{equation}
so that
\begin{displaymath}
w(t)=S(t)(w_0,w_1)+\int_0^t\frac{\sin\left((t-s)\sqrt{-\triangle}\right)}{\sqrt{-\triangle}}h(s)\;ds,
\end{displaymath}
where
\begin{displaymath}
S(t)(w_0,w_1)=\cos\left(t\sqrt{-\triangle}\right)w_0+\left(-\triangle\right)^{-1/2}\sin\left(t\sqrt{-\triangle}\right)w_1.
\end{displaymath}
Then
\begin{multline*}
 \sup_{t\in(-\infty,+\infty)}\nhh{w(t)}{\partial_t w(t)}
 +\nll{D^{s_p-1/2}w}{4}{4}\\
 +\nll{D^{s_p-3/2}\partial_tw}{4}{4}
 +\nll{w}{2(p-1)}{2(p-1)}+\nll{w}{\frac{5}{4}(p-1)}{\frac{5}{2}(p-1)}\\
 \leq C[\nhh{w_0}{w_1}+\nll{D^{s_p-1/2}h}{4/3}{4/3}].
\end{multline*}
\end{lem}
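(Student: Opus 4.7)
The plan is to deduce the lemma directly from the classical inhomogeneous Strichartz inequality for the 3D wave equation of Ginibre--Vega. In dimension three a pair $(q,r)$ with $q,r\ge 2$ and $(q,r)\ne(2,\infty)$ is wave-admissible precisely when $\tfrac{1}{q}+\tfrac{1}{r}\le \tfrac{1}{2}$, in which case the homogeneous flow satisfies
\begin{equation*}
\|D^\alpha S(t)(w_0,w_1)\|_{L_t^q L_x^r}\lesssim \|(w_0,w_1)\|_{\hh},\qquad \alpha=s_p-\tfrac{3}{2}+\tfrac{1}{q}+\tfrac{3}{r},
\end{equation*}
together with the companion bound for $\partial_t w$ at one derivative less. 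The key bookkeeping step is thus to verify, for each of the four norms on the left-hand side, that the underlying pair $(q,r)$ is wave-admissible at exactly the scaling-correct $\alpha$, and then to set the source term against the dual pair $(4/3,4/3)$.

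Running through the pairs: $(q,r)=(4,4)$ is sharply admissible, and the scaling gives $\alpha=s_p-\tfrac{1}{2}$ for $w$ and $\alpha=s_p-\tfrac{3}{2}$ for $\partial_t w$ (whose data lie in $\dot H^{s_p-1}$). The diagonal pair $(2(p-1),2(p-1))$ is admissible since $\tfrac{1}{p-1}\le \tfrac{1}{2}$ for $p\ge 3$, and the scaling identity reduces to $s_p-\tfrac{3}{2}+\tfrac{2}{p-1}=0$, which holds by the very definition of $s_p$. For $(q,r)=(\tfrac{5(p-1)}{4},\tfrac{5(p-1)}{2})$, one has $\tfrac{1}{q}+\tfrac{1}{r}=\tfrac{6}{5(p-1)}\le \tfrac{1}{2}$ whenever $p\ge \tfrac{17}{5}$ (so certainly in the range $p\ge 5$), and again the scaling gives $\alpha=0$.

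For the source term, the $TT^*$ argument of Ginibre--Vega (equivalently the Christ--Kiselev retarded-kernel lemma) shows that the Duhamel piece is controlled in each left-hand norm by $\|D^{s_p-1/2}h\|_{L_t^{4/3} L_x^{4/3}}$, as $(4/3,4/3)$ is the dual of the admissible pair $(4,4)$ and the scaling balance works out: for each left-hand $(q,r,\alpha)$ one checks $\tfrac{1}{q}+\tfrac{3}{r}-\alpha=-2+\tfrac{1}{\tilde q'}+\tfrac{3}{\tilde r'}-(s_p-\tfrac{1}{2})=\tfrac{3}{2}-s_p$. The sup-in-time term then comes from the conservation of $\|(S(t)(w_0,w_1),\partial_t S(t)(w_0,w_1))\|_{\hh}$ in time and the dual energy estimate for the Duhamel piece. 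I do not expect any serious obstacle: the whole argument is bookkeeping on top of the Keel--Tao/Ginibre--Vega machinery, and the only substantive content of the lemma is that the two ``nonlinear'' pairs $L^{2(p-1)}$ and $L_t^{\tfrac{5}{4}(p-1)}L_x^{\tfrac{5}{2}(p-1)}$ are positioned so as to land at the scale-invariant regularity $\dot H^{s_p}$ with no derivative loss, precisely the feature needed later for estimating $|u|^{p-1}u$.
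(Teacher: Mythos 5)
Your proposal is correct and follows the same route the paper intends: the lemma is stated with a bare citation to Ginibre--Velo, and your derivation is exactly the standard admissibility/scaling bookkeeping that reference provides, with each of the four exponent pairs correctly verified to be wave-admissible in $\R^3$ at the scaling-correct derivative order, and the Duhamel term handled by the $TT^*$/Christ--Kiselev argument against the dual of the sharply admissible $(4,4)$ pair.
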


\begin{lem}[Chain rule for fractional derivatives \cite{KPV}]\label{l2.3}
If $F\in C^2$, $F(0)=0$, $F'(0)=$ and $$|F''(a+b)|\leq C
[|F''(a)|+|F''(b)|]
$$ and $$|F'(a+b)|\leq C[|F'(a)|+|F'(b)|],$$
we have, for $0<\alpha<1$
\begin{displaymath}
\nl{D^\alpha F(u)}{p}\leq C\nl{F'(u)}{p_1}\nl{D^\alpha u}{p_2},\quad\frac{1}{p}=\frac{1}{p_1}+\frac{1}{p_2},
\end{displaymath}
\begin{multline*}
\nl{D^\alpha(F(u)-F(v))}{p}\leq C\left[\nl{F'(u)}{p_1}+\nl{F'(v)}{p_1}\right]
\nl{D^\alpha(u-v)}{p_2}+\\ + C\left[\nl{F''(u)}{r_1}+\nl{F''(v)}{r_1}\right]
\left[\nl{D^\alpha u}{r_2}+\nl{D^\alpha v}{r_2}\right]\nl{u-v}{r_3},
\end{multline*}
where $\frac{1}{p}=\frac{1}{r_1}+\frac{1}{r_2}+\frac{1}{r_3}$ and $\quad\frac{1}{p}=\frac{1}{p_1}+\frac{1}{p_2}.$
\end{lem}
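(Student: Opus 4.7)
The plan is to reduce both inequalities to the Stein-type pointwise characterization of the fractional derivative, valid for $0<\alpha<1$:
\begin{displaymath}
D^\alpha f(x) \sim \left(\int_{\R^3} \frac{|f(x+y)-f(x)|^2}{|y|^{3+2\alpha}}\, dy\right)^{1/2},
\end{displaymath}
together with a fractional Leibniz (paraproduct) rule. This converts fractional differentiation of composite functions into pointwise difference inequalities that are amenable to H\"older's inequality.

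For the first estimate, I would write, via the mean value theorem,
\begin{displaymath}
F(u(x+y))-F(u(x)) = (u(x+y)-u(x))\int_0^1 F'\bigl((1-s)u(x)+s u(x+y)\bigr)\,ds,
\end{displaymath}
and invoke the hypothesis $|F'(a+b)|\le C[|F'(a)|+|F'(b)|]$, applied to the convex combination $(1-s)u(x)+s u(x+y)$, to dominate the integrand by $C[|F'(u(x))|+|F'(u(x+y))|]$. Inserted into the square-function formula, this splits $D^\alpha F(u)(x)$ into a ``centered'' piece that factors pointwise as $|F'(u(x))|\cdot D^\alpha u(x)$ (closed by H\"older with $\tfrac{1}{p}=\tfrac{1}{p_1}+\tfrac{1}{p_2}$), and an ``off-centered'' piece in which $F'$ is evaluated at $u(x+y)$, which is handled by a maximal-function majorization (equivalently, a Muckenhoupt $A_2$ weighted inequality) to produce the same product $\nl{F'(u)}{p_1}\nl{D^\alpha u}{p_2}$.

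For the second estimate, I would factor
\begin{displaymath}
F(u)-F(v) = (u-v) G(u,v),\qquad G(u,v) = \int_0^1 F'\bigl(v+t(u-v)\bigr)\, dt,
\end{displaymath}
and apply a fractional Leibniz rule $D^\alpha(fg) \lesssim f\cdot D^\alpha g + g\cdot D^\alpha f$ (interpreted via paraproducts). The piece $G(u,v)\cdot D^\alpha(u-v)$ is controlled, using $|G(u,v)|\lesssim |F'(u)|+|F'(v)|$ and H\"older with $\tfrac{1}{p}=\tfrac{1}{p_1}+\tfrac{1}{p_2}$, by the first summand on the right. For the remaining piece $(u-v)\cdot D^\alpha G(u,v)$, I would estimate $D^\alpha G(u,v)$ by the same square-function device, this time applying the mean value theorem to $F'$ to extract a factor of $F''$ at an intermediate argument; the hypothesis on $F''$ then reduces the $F''$ factor to $|F''(u)|+|F''(v)|$, and the surviving differences are absorbed into $D^\alpha u + D^\alpha v$. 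A three-factor H\"older with $\tfrac{1}{p}=\tfrac{1}{r_1}+\tfrac{1}{r_2}+\tfrac{1}{r_3}$ delivers the second summand.

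The main technical obstacle is handling the ``off-centered'' factors like $F'(u(x+y))$ and $F''$ evaluated at the convex combination: a naive pointwise bound does not recover the $L^{p_1}$ norm of $F'(u)$ itself, so one must deploy a maximal-function inequality, a paraproduct decomposition, or a weighted-norm argument. Since this is the well-known chain rule of Kenig--Ponce--Vega \cite{KPV}, the detailed verification would be quoted from their paper rather than reproduced here.
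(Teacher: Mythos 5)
The paper does not prove Lemma~\ref{l2.3}: it is stated with the citation \cite{KPV} and used as a black box. There is therefore no in-paper argument to compare your sketch against, and deferring the detailed verification to Kenig--Ponce--Vega, as you do at the end, is in fact exactly what the paper does.

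That said, your outline is broadly faithful to the strategy in \cite{KPV}: the proof there does proceed through the Strichartz/Stein square-function characterization of $D^\alpha$, the mean value formula $F(a)-F(b)=(a-b)\int_0^1 F'(b+t(a-b))\,dt$, and the structural hypotheses $|F'(a+b)|\le C(|F'(a)|+|F'(b)|)$, $|F''(a+b)|\le C(|F''(a)|+|F''(b)|)$ to replace $F'$ and $F''$ at intermediate points by their values at $u$ and $v$. The one place you are under-specific is the treatment of the ``off-centered'' contribution, i.e. the square function involving $F'(u(x+y))$ rather than $F'(u(x))$; your appeal to ``a maximal-function majorization (equivalently a Muckenhoupt $A_2$ weighted inequality)'' is the right kind of idea but is not a transparent step, and this is precisely the part where \cite{KPV} do real work. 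Similarly, for the difference estimate, the honest route in \cite{KPV} is not a formal fractional Leibniz rule applied to $(u-v)\cdot G(u,v)$ but a direct manipulation of the square function for $F(u)-F(v)$ using the mean value theorem twice (once for $F$, once for $F'$); your paraproduct reformulation is plausible but would require separately controlling $D^\alpha G(u,v)$, which is itself a chain-rule problem of the same difficulty. None of this is a gap in the present paper, since it relies on the lemma as quoted; but if you intend your sketch as a self-contained proof, the off-centered term and the $D^\alpha G(u,v)$ estimate are where you would need to do the actual analysis rather than invoke a slogan.
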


We now define the $S_p(I)$, $W(I)$ norms, for  a time interval $I$ by
\begin{displaymath}
\nsi{v}=\nlli{v}{2(p-1)}{2(p-1)}\quad\mathrm{and}\quad \nwi{v}=\nlli{v}{4}{4}.
\end{displaymath}
We now note the following two important consequences of Lemma \ref{l2.3} and the definitions:

Let $F(u)= \pm |u|^{p-1}u$, $5<p<\infty$. Recall that $s_p=\frac{3}{2}-\frac{2}{p-1}$, so that $1<s_p<3/2$. We will also set $\alpha_p=s_p-1/2$, so that
$\frac{1}{2}<\alpha_p<1$. Then:
\begin{equation}\label{e2.4}
\nlli{D^{\alpha_p}F(u)}{4/3}{4/3}\leq C\nsi{u}^{(p-1)}\nwi{D^{\alpha_p}u}
\end{equation}
and
\begin{multline}\label{e2.5}
 \nlli{D^{\alpha_p}(F(u)-F(v))}{4/3}{4/3}\\
 \leq C[\nlli{F'(u)}{2}{2}+\nlli{F'(v)}{2}{2}]\nwi{D^{\alpha_p}(u-v)}\\
 +C[\nlli{F''(u)}{2(p-1)/p-2}{2(p-1)/p-2}+\nlli{F''(v)}{2(p-1)/p-2}{2(p-1)/p-2}]\\
 \times[\nwi{D^{\alpha_p}u}+\nwi{D^{\alpha_p}v}]\nsi{u-v}.
\end{multline}
Recalling also that $|F'(u)|\approx|u|^{p-1}$ and that $|F''(u)|\approx|u|^{p-2}$, using Lemma \ref{l2.2}, \eref{e2.4} and \eref{e2.5}, we obtain, 
in a standard manner (see \cite{Pm}, \cite{GSV}, \cite{SS3m}, \cite{KM1}, \cite{KM2}).
\begin{thm}\label{t2.6}
Assume that $(u_0,u_1)\in\hh$, $5<p<\infty$, $0\in \stackrel{\scriptscriptstyle{\circ}}{I}$,\\ $\nhh{u_0}{u_1}\leq A$. 
Then, there exists $\delta=\delta(A,p)>0$ such that if 
\begin{displaymath}
\left\Vert S(t)(u_0,u_1)\right\Vert_{S_p(I)}<\delta,
\end{displaymath}
there exists a unique solution $u$ to \eref{cp} in $\R^3\times I$ (in the sense of the integral equation), with $(u,\partial_tu)\in C(I;\hh)$,
$$\nsi{u}<2\delta, \ \ \nwi{D^{\alpha_p}u}+\nwi{D^{\alpha_p-1}\partial_tu}<+\infty,$$  and in addition, 
$\nlli{u}{\frac{5}{4}(p-1)}{\frac{5}{2}(p-1)}<\infty.$ \\
Also, if $(u_{0,k},u_{1,k})\to(u_0,u_1)$ as $k\to\infty$ in $\hh$, then $$(u_k,\partial_tu_k)\to(u,\partial_tu)\quad\mathrm{in}\;C(I;\hh),$$ 
where $u_k$ is the
solution corresponding to $(u_{0,k},u_{1,k})$.
\end{thm}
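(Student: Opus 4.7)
The plan is a standard Picard contraction adapted to the critical spaces dictated by the Strichartz estimate. I would apply it to the Duhamel map
$$\Phi(v)(t) = S(t)(u_0,u_1) - \mu \int_0^t \frac{\sin\bigl((t-s)\sqrt{-\triangle}\bigr)}{\sqrt{-\triangle}} |v|^{p-1}v(s)\,ds$$
on the complete metric space
$$X = \{\, v \in C(I;\dot H^{s_p}) : \|v\|_{S_p(I)} \le 2\delta,\ \|D^{\alpha_p} v\|_{W(I)} + \|D^{\alpha_p-1}\partial_t v\|_{W(I)} \le M \,\},$$
with metric $d(v,w) = \|v-w\|_{S_p(I)} + \|D^{\alpha_p}(v-w)\|_{W(I)}$, where $M=M(A)$ and $\delta=\delta(A,p)>0$ are chosen below.

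For the self-mapping property, Lemma \ref{l2.2} applied to $\Phi(v) - S(t)(u_0,u_1)$ together with \eref{e2.4} yields
\begin{align*}
\|\Phi(v)\|_{S_p(I)} &\le \|S(t)(u_0,u_1)\|_{S_p(I)} + C\|v\|_{S_p(I)}^{p-1}\|D^{\alpha_p}v\|_{W(I)} \\
&\le \delta + C(2\delta)^{p-1}M,
\end{align*}
and, by the same estimate, $\|D^{\alpha_p}\Phi(v)\|_{W(I)} + \|D^{\alpha_p-1}\partial_t\Phi(v)\|_{W(I)} + \sup_I \|(\Phi(v),\partial_t\Phi(v))\|_{\hh} \le CA + C(2\delta)^{p-1}M$. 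Setting $M=2CA$ and then choosing $\delta$ small enough that $C(2\delta)^{p-1}M \le \min(\delta, CA)$ gives $\Phi(X)\subset X$. For contraction, apply Lemma \ref{l2.2} to $\Phi(v)-\Phi(w)$ and combine with \eref{e2.5}, using the matchings $\|F'(v)\|_{L^2_{t,x}(I)} \approx \|v\|_{S_p(I)}^{p-1}$ and $\|F''(v)\|_{L^{2(p-1)/(p-2)}_{t,x}(I)} \approx \|v\|_{S_p(I)}^{p-2}$, to obtain
$$d(\Phi(v),\Phi(w)) \le C\bigl[(2\delta)^{p-1} + (2\delta)^{p-2}M\bigr]\, d(v,w) \le \tfrac12 d(v,w),$$
after further shrinking $\delta$. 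The unique fixed point $u \in X$ is the desired solution.

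The extra bound $\|u\|_{L_I^{5(p-1)/4}L_x^{5(p-1)/2}} < \infty$ and the continuity $(u,\partial_t u)\in C(I;\hh)$ follow from a final application of Lemma \ref{l2.2} to $u$ together with standard continuity in $t$ of the linear flow and of the Duhamel integral in $\hh$. For the stability assertion, given $(u_{0,k},u_{1,k}) \to (u_0,u_1)$ in $\hh$, one first subdivides $I$ into finitely many subintervals on each of which the Strichartz smallness hypothesis continues to hold for both the limit and the approximants (for all large $k$), and then iterates the contraction estimate on the difference $u_k-u$ from one subinterval to the next.

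The main technical obstacle is the critical-scaling bookkeeping: one must verify that \eref{e2.4}--\eref{e2.5} close against the dual Strichartz norm $L^{4/3}L^{4/3}$ with $D^{\alpha_p}$ placed in $W=L^4L^4$, so that the remaining factors are exactly powers of $\|v\|_{S_p(I)}$. This tight H\"older balance, inherited from Lemma \ref{l2.3}, is what makes smallness of the single auxiliary quantity $\|S(t)(u_0,u_1)\|_{S_p(I)}$ (rather than smallness of $\|(u_0,u_1)\|_{\hh}$ itself) sufficient for local existence, and is the feature that will later enable the profile-decomposition arguments of Sections 4--6.
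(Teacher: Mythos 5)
Your proposal is the standard Picard contraction on a metric ball defined by the $S_p$ and $D^{\alpha_p}$-$W$ norms, closed via the Strichartz estimates of Lemma~\ref{l2.2} and the product/difference estimates \eref{e2.4}--\eref{e2.5}; this is exactly the argument the paper invokes when it says the result follows ``in a standard manner'' from those ingredients (citing \cite{Pm}, \cite{GSV}, \cite{SS3m}, \cite{KM1}, \cite{KM2}). The H\"older bookkeeping you check (pairing $\nl{F'(v)}{2}\sim\nsi{v}^{p-1}$ and $\nl{F''(v)}{2(p-1)/(p-2)}\sim\nsi{v}^{p-2}$ against the dual norm $L^{4/3}_tL^{4/3}_x$) matches the paper's, so this is the same proof.
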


\begin{rem}[Higher regularity of solutions]\label{r2.7}
If $$(u_0,u_1)\in(\dot H^{s_p}\cap\dot H^{s_p+\mu},\dot H^{s_p-1}\cap\dot H^{s_p-1+\mu}),$$ $0\leq\mu\leq1$ and $(u_0,u_1)$ verifies the conditions 
of Theorem \ref{t2.6}, then $(u,\partial_tu)\in C(I;\dot H^{s_p}\cap\dot H^{s_p+\mu}\times\dot H^{s_p-1}\cap\dot H^{s_p-1+\mu})$ and
\begin{displaymath}
\nwi{D^{\alpha_p+\mu}u}+\nwi{D^{\alpha_p}}+\nwi{\partial_tD^{\alpha_p-1+\mu}u}+\nwi{\partial_tD^{\alpha_p-1}u}<\infty,
\end{displaymath}
$\nsi{u}\leq2\delta$. Se \cite{GSV}, for example, for a similar result.
\end{rem}

\begin{rem}\label{r2.8}
There exists $\widetilde\delta=\widetilde\delta_p$ so that, if $\nhh{u_0}{u_1}<\widetilde\delta$, the conclusion of Theorem \ref{t2.6} holds
with $I=\R$. 
This is because of Lemma \ref{l2.2}.
\end{rem}

\begin{rem}\label{r2.9}
Given $(u_0,u_1)\in\hh$, there exists $(0\in)I$ such that the conclusion of Theorem \ref{t2.6} holds. This is because of Lemma \ref{l2.2}.
\end{rem}

\begin{dfn}\label{d2.10}
Let $t_0\in I$. We say that $u$ is a solution of \eref{cp} in $I$ if $(u,\partial_tu)\in C(I;\hh)$, $D^{\alpha_p}u\in W(I)$, $u\in S_p(I)$,
$(u,\partial_tu)\big|_{t=t_0}=(u_0,u_1)$ and the integral equation 
\begin{displaymath}
u(t)=S(t-t_0)((u_0,u_1))-\int_{t_0}^t\frac{\sin\left((t-s)\sqrt{-\triangle}\right)}{\sqrt{-\triangle}}F(u(s))\;ds
\end{displaymath}
holds, with $F(u)=|u|^{p-1}u$, $x\in\R^3$, $t\in I$.
\end{dfn}

It is easy to see that solutions of \eref{cp} are unique (see \cite{Czm} and the argument in 2.10 of \cite{KM1}). This allows us to define a maximal interval 
$I((u_0,u_1))$, where the solution is defined.
\begin{displaymath}
I((u_0,u_1))=\left(t_0-\tmu,t_0+\tpu\right)
\end{displaymath}
and if $I'\subset\subset I((u_0,u_1))$, $t_0\in I'$, then $u$ solves \eref{cp} in $\R^3\times I'$, so that 
$(u,\partial_tu)\in C(I';\hh)$, $D^{\alpha_p}u\in W(I')$, $u\in S_p(I')$, $\partial_tD^{\alpha_p-1}u\in W(I')$ (using \eref{e2.4} and Lemma \ref{l2.2}).

\begin{lem}[Standard finite blow-up criterion]\label{l2.11}
If $\tpu<+\infty$, then
\begin{displaymath}
\ns{u}{[t_0,t_0+\tpu)}=+\infty.
\end{displaymath}
\end{lem}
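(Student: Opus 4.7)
The plan is to argue by contradiction: assume $\nsi{u}<\infty$ on $I=[t_0,t_0+\tpu)$ and show that this allows one to extend $u$ past $t_0+\tpu$, contradicting maximality.

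First I would partition $[t_0,t_0+\tpu)$ into finitely many consecutive subintervals $I_j=[t_j,t_{j+1}]$ so that $\|u\|_{S_p(I_j)}<\eta$ for a small $\eta$ to be chosen. On a single $I_j$, the integral equation from Definition \ref{d2.10} combined with the Strichartz estimate (Lemma \ref{l2.2}) and the nonlinear estimate \eref{e2.4} yields
\begin{displaymath}
\nllij{D^{\alpha_p}u}{4}{4}\leq C\nhh{u(t_j)}{\partial_t u(t_j)}+C\eta^{p-1}\nllij{D^{\alpha_p}u}{4}{4}.
\end{displaymath}
Choosing $\eta$ so that $C\eta^{p-1}\leq 1/2$, one absorbs the last term, and then Strichartz again gives $\sup_{t\in I_j}\nhh{u(t)}{\partial_tu(t)}\leq C'\nhh{u(t_j)}{\partial_t u(t_j)}$. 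Iterating over the finitely many $I_j$ produces a bound $M=M(\nhhx{(u_0,u_1)},\nsi{u})<\infty$ such that $\nhh{u(t)}{\partial_tu(t)}\leq M$ for every $t\in I$.

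Next I would show that $(u(t),\partial_tu(t))$ has a limit in $\hh$ as $t\to(t_0+\tpu)^-$. For $t<t'$ both in a terminal piece $I_J$, applying Strichartz and \eref{e2.5} on $[t,t']$ (with data $(u(t),\partial_tu(t))$) bounds the difference $\nhh{S(\cdot-t)(u(t),\partial_tu(t))-(u(t'),\partial_tu(t'))\cdots}{\cdots}$ by $C\|u\|_{S_p([t,t'])}^{p-1}\|D^{\alpha_p}u\|_{W([t,t'])}$, which tends to $0$ as $t,t'\to (t_0+\tpu)^-$ since $\nsi{u}<\infty$ implies $\|u\|_{S_p([t,t_0+\tpu))}\to 0$. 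Together with the continuity of the linear evolution $S(\tau)$ on $\hh$ as $\tau\to 0$, this gives the Cauchy property, and hence a limit $(v_0,v_1)\in\hh$.

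Finally, I would apply Remark \ref{r2.9} at time $t_0+\tpu$ with data $(v_0,v_1)$ to solve \eref{cp} on some interval $(t_0+\tpu-\varepsilon,t_0+\tpu+\varepsilon)$. By the continuous dependence in Theorem \ref{t2.6} and uniqueness, this solution agrees with $u$ on $(t_0+\tpu-\varepsilon,t_0+\tpu)$, thereby extending $u$ strictly past $t_0+\tpu$ and contradicting the definition of $\tpu$. I expect the only delicate step to be the Cauchy argument at the endpoint, since one must verify that $\|u\|_{S_p([t,t_0+\tpu))}\to 0$ (a consequence of finiteness of $\nsi{u}$ and dominated convergence) and carefully combine it with \eref{e2.5}; everything else is a routine bootstrap from the small-data theory.
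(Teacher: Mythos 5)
Your overall strategy---argue by contrapositive, use a finite partition to bootstrap a uniform $\hh$ bound from $\nsi{u}<\infty$, then extend the solution past $t_0+\tpu$ to contradict maximality---is indeed the standard one and matches the route in the cited references (\cite{Czm}, \cite{KM1} Lemma 2.11). The bootstrap on each $I_j$ is fine (with the minor caveat that for the last piece touching the endpoint one should work on $[t_J, t_0+\tpu-\varepsilon]$ and let $\varepsilon\to 0$, since $D^{\alpha_p}u\in W(I')$ is only known a priori for $I'\subset\subset I$).

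There is, however, a genuine gap in the Cauchy step. You bound $\nhhx{(u(t'),\partial_t u(t'))-\vec S(t'-t)(u(t),\partial_t u(t))}$ by the Duhamel tail, which indeed tends to $0$, and then invoke ``continuity of the linear evolution $S(\tau)$ as $\tau\to 0$.'' But strong continuity of $S(\tau)$ is not uniform on bounded subsets of $\hh$, and the data $(u(t),\partial_t u(t))$ is itself moving with $t$; so $\nhhx{\vec S(t'-t)(u(t),\partial_t u(t))-(u(t),\partial_t u(t))}$ need not go to $0$ just because $t'-t\to 0$. As written, the Cauchy property does not follow. The fix is to freeze the base time: pick $t_1$ close to $t_0+\tpu$ so that the Duhamel tail from $t_1$ onward is $<\varepsilon/2$ (using $\nsi{u}<\infty$), and then use that $\tau\mapsto \vec S(\tau)(u(t_1),\partial_t u(t_1))$ is \emph{uniformly} continuous on the compact interval $[0,t_0+\tpu-t_1]$, since its data is now fixed. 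Writing both $(u(t),\partial_t u(t))$ and $(u(t'),\partial_t u(t'))$ via Duhamel from $t_1$ then gives the Cauchy property.

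It is worth noting that the reference proof avoids the Cauchy argument altogether, and is cleaner: once one has the uniform $\hh$ bound $M$, one estimates, for $t_1<t_0+\tpu$,
\begin{displaymath}
\ns{S(\cdot-t_1)(u(t_1),\partial_t u(t_1))}{[t_1,t_0+\tpu)}\leq\ns{u}{[t_1,t_0+\tpu)}+C\ns{u}{[t_1,t_0+\tpu)}^{p-1}\nw{D^{\alpha_p}u}{[t_1,t_0+\tpu)},
\end{displaymath}
which tends to $0$ as $t_1\uparrow t_0+\tpu$. Since $\nhh{u(t_1)}{\partial_t u(t_1)}\leq M$, Strichartz gives $\ns{S(\cdot-t_1)(u(t_1),\partial_t u(t_1))}{\R}<\infty$, so by absolute continuity of the time integral one may choose $\varepsilon>0$ with $\ns{S(\cdot-t_1)(u(t_1),\partial_t u(t_1))}{[t_1,t_0+\tpu+\varepsilon]}<\delta(M)$; Theorem \ref{t2.6} then produces a solution on $[t_1,t_0+\tpu+\varepsilon]$ agreeing with $u$ by uniqueness, contradicting the definition of $\tpu$. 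This route buys you a shorter argument and sidesteps the equicontinuity issue entirely, while your route (once patched as above) additionally yields that $(u(t),\partial_t u(t))$ extends continuously to the closed interval.
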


See \cite{Czm}, \cite{KM1}, Lemma 2.11,  for instance, for a similar proof.

\begin{rem}[Scattering]\label{r2.12}(See \cite{Czm} and remark 2.15 in \cite{KM1} for a similar argument). If $u$ is a solution of \eref{cp} in $\R^3\times I$,
$I=[a,+\infty)$ (or $I=(-\infty, a]$), there exists $(u_0^+,u_1^+)\in\hh$ ($(u_0^-,u_1^-)\in\hh$) so that
\begin{displaymath}
\lim_{t\to+\infty}\nhhx{(u(t),\partial_t u)-\left(S(t)(u_0^+,u_1^+),\partial_tS(t)(u_0^+,u_1^+)\right)}=0
\end{displaymath}
(with a similar statement as $t\downarrow-\infty$). This is a consequence of $\ns{u}{[a,+\infty)}<\infty$.
\end{rem}

We next turn to a perturbation theorem that will be needed for our applications. We first recall an inhomogeneous Strichartz estimate:

\begin{lem}\label{l2.13}
Let $\beta=\theta\alpha_p=\theta(s_p-1/2)=\theta(1-2/p-1)$, where $0<\theta<1$. Define $q$ by $\frac{1}{q}=\frac{1-\theta}{2(p-1)}+\frac{\theta}{4}$.
Assume that $\theta$ is so close to 1 that 
$$q<6
 \ \ \mbox{and} \ \ \frac{4}{q}<1+\frac{1}{2(p-1)}. $$
Define $\widetilde q$ by the equation 
$\frac{1}{2}=\frac{1}{q}+\frac{1}{\widetilde q}$. Then,
\begin{equation}\label{e2.14}
\nlli{D^\beta\int_0^t\frac{\sin\left((t-s)\sqrt{-\triangle}\right)}{\sqrt{-\triangle}}h(s)\;ds}{q}{q}\leq C\nlli{D^\beta h}{\widetilde q'}{\widetilde q'}
\end{equation}
and
\begin{equation}\label{e2.15}
\nsi{\int_0^t\frac{\sin\left((t-s)\sqrt{-\triangle}\right)}{\sqrt{-\triangle}}h(s)\;ds}\leq C\nlli{D^\beta h}{\widetilde q'}{\widetilde q'}.
\end{equation}
\end{lem}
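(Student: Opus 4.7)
These are off-diagonal inhomogeneous Strichartz estimates for the three-dimensional wave equation. The plan is to establish both \eref{e2.14} and \eref{e2.15} by complex interpolation between two endpoint cases, combined with the Christ-Kiselev lemma to pass from non-retarded bilinear estimates to the retarded Duhamel integral.

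At $\theta=1$ one has $q = \widetilde q = 4$, $\widetilde q' = 4/3$, and $\beta = \alpha_p$. Then \eref{e2.14} becomes the classical $(4,4)$-Strichartz bound $\nlli{D^{\alpha_p}\int_0^t\sin((t-s)\sqrt{-\triangle})/\sqrt{-\triangle}\,h(s)\,ds}{4}{4} \leq C\,\nlli{D^{\alpha_p} h}{4/3}{4/3}$, and \eref{e2.15} becomes $\nlli{\int_0^t\sin((t-s)\sqrt{-\triangle})/\sqrt{-\triangle}\,h(s)\,ds}{2(p-1)}{2(p-1)} \leq C\,\nlli{D^{\alpha_p} h}{4/3}{4/3}$; both are immediate from Lemma \ref{l2.2} applied with zero initial data. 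At $\theta=0$ one has $q = 2(p-1)$, $\widetilde q' = 2(p-1)/p$, and $\beta = 0$, so both \eref{e2.14} and \eref{e2.15} collapse to the common inhomogeneous Strichartz bound
\begin{equation*}
\nlli{\int_0^t\frac{\sin((t-s)\sqrt{-\triangle})}{\sqrt{-\triangle}}\,h(s)\,ds}{2(p-1)}{2(p-1)} \leq C\,\nlli{h}{2(p-1)/p}{2(p-1)/p}.
\end{equation*}

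I would derive this $\theta=0$ endpoint by applying the $TT^{*}$ argument to the homogeneous Strichartz $\nll{S(t)(w_0,w_1)}{2(p-1)}{2(p-1)} \leq C\,\nhh{w_0}{w_1}$ from Lemma \ref{l2.2}, producing the non-retarded bilinear version, and then invoking Christ-Kiselev to convert to the retarded Duhamel form. With both endpoints in hand, Stein's complex interpolation theorem applied to the analytic family $T_z h = D^{z\alpha_p}\int_0^t \sin((t-s)\sqrt{-\triangle})/\sqrt{-\triangle}\,D^{-z\alpha_p}h(s)\,ds$, $\mathrm{Re}\,z \in [0,1]$, interpolates input exponent $\widetilde q'$ and output exponent $q$ linearly and, after the substitution $g = D^{-\beta}h$, yields \eref{e2.14}; the parallel interpolation with output norm fixed at $\|\cdot\|_{L^{2(p-1)}_I L^{2(p-1)}_x}$ gives \eref{e2.15}. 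The hypotheses $q<6$ and $\frac{4}{q} < 1 + \frac{1}{2(p-1)}$ (requiring $\theta$ sufficiently close to $1$) encode the admissibility and Sobolev constraints on the interpolated pairs along the strip.

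The main technical obstacle is the $\theta=0$ endpoint itself: the natural $TT^{*}$-dual of $(2(p-1),2(p-1))$ is $(2(p-1))' = 2(p-1)/(2p-3)$ rather than the required $2(p-1)/p$, so naive $TT^{*}$ alone does not suffice. Bridging this gap requires supplementing $TT^{*}$ with a Hardy-Littlewood-Sobolev-in-time argument based on the 3D wave dispersive bound $\nl{e^{it\sqrt{-\triangle}} f}{r} \leq C|t|^{-(1-2/r)}\,\nl{D^{1-2/r} f}{r'}$, in the spirit of the off-diagonal inhomogeneous Strichartz theory of Foschi and Vilela; this is the step I expect to be most delicate.
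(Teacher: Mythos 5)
The paper offers no internal proof for this lemma: it is established by a direct citation of Corollary~8.7 and Remark~8.4 of Taggart's ``Inhomogeneous Strichartz estimates'' (which build on Harmse, Oberlin, Foschi and Vilela), applied at the one value of $\theta$ satisfying the stated constraints; there is no interpolation between $\theta$-endpoints. Your proposal therefore takes a genuinely different route, but it has a gap at its $\theta=0$ base case. For $p>5$ and $\theta=0$ one has $q=2(p-1)>8$, so the lemma's hypothesis $q<6$ already fails there, and this exclusion is not an accident: since $\frac{1}{\widetilde q}=\frac{1}{2}-\frac{1}{q}$, the condition $q<6$ is exactly $\widetilde q>3$, i.e.\ $\frac{1}{\widetilde q}<1-\frac{2}{\widetilde q}$, which is precisely the Foschi acceptability condition for the pair $(\widetilde q,\widetilde q)$ in three space dimensions. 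Your proposed left endpoint therefore lies strictly outside the off-diagonal Foschi--Vilela--Taggart theory you invoke, and it cannot be reached by $TT^{*}$ plus Christ--Kiselev plus HLS-in-time: you yourself observe that $TT^{*}$ only produces the H\"older dual $2(p-1)/(2p-3)$, and the HLS-in-time bridge breaks down for $\widetilde q\leq 3$ for exactly the same reason the acceptability condition does.

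There is also an error in the dispersive estimate you would rely on: in $\R^{3}$ the correct frequency-global bound for the half-wave propagator is $\|e^{it\sqrt{-\triangle}}f\|_{L^{r}}\leq C|t|^{-(1-2/r)}\|D^{2-4/r}f\|_{L^{r'}}$, with Sobolev index $(d+1)(\frac12-\frac1r)=2-\frac{4}{r}$, not $1-\frac{2}{r}$ as you wrote. More fundamentally, to run an interpolation you would need a valid left endpoint at some $\theta_{1}>0$ with $q(\theta_{1})<6$, but establishing that endpoint already requires the full Foschi--Vilela--Taggart machinery for a non-dual pair of exponents, after which there is nothing left to interpolate: one might as well, as the paper does, apply that theory directly at the $\theta$ of interest.
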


This estimate follows by results of Harmse \cite{H}, Oberlin \cite{O}, Foschi \cite{F}, Vilela \cite{V} and Taggart \cite{Ta}. The version 
we are using here is in Corollary 8.7 of \cite{Ta}, using also Remark 8.4 in the same paper.

\begin{thm}[Perturbation Theorem]\label{t2.16}
Let $I\subset\R$ be  a time interval, $t_0\in I$, $(u_0,u_1)\in\hh$ and some constants $M,A,A'>0$. Let $\widetilde u$ be defined on $\R^3\times I$ 
and satisfy
\begin{displaymath}
\sup_{t\in I}\nhh{\widetilde u(t)}{\partial_t\widetilde u(t)}\leq A,
\end{displaymath}
$\nsi{\widetilde u(t)}\leq M$ and $\nw{D^{\alpha_p}\widetilde u}{I'}<\infty$ for each $I'\subset\subset I$. Assume that 
\begin{displaymath}
(\partial_t^2-\triangle)(\widetilde u)=-F(\widetilde u)+e,\quad\quad(x,t)\in\R^3\times I
\end{displaymath}
(in the sense of the appropriate integral equation) and that
\begin{displaymath}
\nhh{u_0-\widetilde u(t_0)}{u_1-\partial_t\widetilde u(t_0)}\leq A'
\end{displaymath}
and that
\begin{displaymath}
\nlli{D^{\alpha_p}e}{4/3}{4/3}+\nsi{S(t-t_0)\left(u_0-\widetilde u(t_0),u_1-\partial_t\widetilde u(t_0)\right)}\leq\epsilon.
\end{displaymath}
Then there exists $\epsilon_0=\epsilon_0(M,A,A')$ such that there exists a solution $u$ of \eref{cp} in $I$, with 
$(u(t_0),\partial_tu(t_0))=(u_0,u_1)$, for $0<\epsilon<\epsilon_0$, with $\nsi{u}\leq C(M,A,A')$ and for all $t\in I$,
\begin{displaymath}
\nhhx{(u(t),\partial_tu(t))-(\widetilde u(t),\partial_t\widetilde u(t))}\leq C(M,A,A')(A'+\epsilon^\alpha),\;\alpha>0.
\end{displaymath}
\end{thm}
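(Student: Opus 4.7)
The plan is a finite--partition Strichartz--bootstrap argument for the difference $w=u-\widetilde u$, which formally satisfies
\begin{displaymath}
(\partial_t^2-\triangle)w=-\bigl(F(\widetilde u+w)-F(\widetilde u)\bigr)-e,\qquad (w,\partial_tw)\big|_{t=t_0}=\bigl(u_0-\widetilde u(t_0),\,u_1-\partial_t\widetilde u(t_0)\bigr).
\end{displaymath}
Decompose $w=z+v$ with $z(t)=S(t-t_0)\bigl(u_0-\widetilde u(t_0),\,u_1-\partial_t\widetilde u(t_0)\bigr)$. The hypothesis on $e$ and on the linear evolution of the data difference, together with Lemma \ref{l2.2}, yields $\ns{z}{I}\le\epsilon$, $\nlli{D^{\alpha_p}e}{4/3}{4/3}\le\epsilon$, and $\nwi{D^{\alpha_p}z}\le CA'$. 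These are the small source terms that will drive a bootstrap on the nonlinear remainder $v$.

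Using $\ns{\widetilde u}{I}\le M$, partition $I=\bigcup_{j=1}^{N}I_j$ into consecutive closed subintervals on which $\ns{\widetilde u}{I_j}\le\eta$ and $\ns{z}{I_j}\le\eta$, with $\eta=\eta(p)$ a small constant to be chosen and $N=N(M,p)$. On each $I_j$, run a continuity--in--time argument for $v$ from the Duhamel formulation, estimating $F(\widetilde u+z+v)-F(\widetilde u)$ via the chain--rule bounds \eref{e2.4}--\eref{e2.5} together with $|F'(u)|\approx|u|^{p-1}$ and $|F''(u)|\approx|u|^{p-2}$. Outer factors involving only $\widetilde u$ or $z$ are controlled by positive powers of $\eta$ and thereby produce small multipliers; outer factors involving $v$ get absorbed to the left--hand side once $\ns{v}{I_j}$ itself is shown to be small. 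Iterating from $I_j$ to $I_{j+1}$ costs one application of Lemma \ref{l2.2} to transport the $\hh$--norm of $(v,\partial_tv)$ across the dividing time; after $N$ steps one arrives at the stated bound, with the power $\epsilon^\alpha$ for some $\alpha>0$ reflecting the fact that $\epsilon$ enters the nonlinear part of the bootstrap at a strictly superlinear order.

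The main obstacle is that a naive application of \eref{e2.5} to $F(\widetilde u+w)-F(\widetilde u)$ produces a contribution proportional to $\nw{D^{\alpha_p}\widetilde u}{I_j}\cdot\ns{w}{I_j}$; although $\nw{D^{\alpha_p}\widetilde u}{I_j}$ is finite by hypothesis, it is \emph{not} small on individual pieces $I_j$ and so cannot be absorbed by the bootstrap. This is exactly the role of Lemma \ref{l2.13}: its exotic pair with $\beta=\theta\alpha_p<\alpha_p$ bounds the $S_p(I_j)$--component of the Duhamel integral by $\nllij{D^\beta h}{\widetilde q'}{\widetilde q'}$, and performing the chain--rule estimate at the lower order $\beta$ in place of $\alpha_p$ lets every derivative falling on $\widetilde u$ be replaced by an undifferentiated factor, ultimately bounded by $\ns{\widetilde u}{I_j}^{p-1}\le\eta^{p-1}$. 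With this dangerous term neutralized, the bootstrap on $v$ closes on each $I_j$, and the iterative structure described above completes the argument.
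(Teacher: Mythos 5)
Your high-level architecture (split off the free flow of the data difference, partition $I$ into subintervals where the relevant Strichartz norms of $\widetilde u$ are small, run a bootstrap, iterate across the pieces) matches the paper's. But your diagnosis of the obstruction, and hence your explanation of \emph{why} Lemma~\ref{l2.13} is needed, is wrong, and this is the heart of the proof.

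You claim that the dangerous term coming from \eref{e2.5} is $\nw{D^{\alpha_p}\widetilde u}{I_j}\cdot\ns{w}{I_j}$, on the grounds that $\nw{D^{\alpha_p}\widetilde u}{I_j}$ ``is finite by hypothesis [but] not small on individual pieces.'' That premise is false: once one knows $\nwi{D^{\alpha_p}\widetilde u}\le\widetilde M<\infty$ (the paper's step \eref{e2.17}), the $L_t^4L_x^4$ norm of the \emph{fixed} function $D^{\alpha_p}\widetilde u$ can be made $\le\eta$ on each piece simply by refining the partition, exactly as for $\ns{\widetilde u}{I_j}$. The genuinely dangerous factor is the companion $\nw{D^{\alpha_p}(\widetilde u+w)}{I_j}$, equivalently $\nw{D^{\alpha_p}w}{I_j}$, and the factor $\nw{D^{\alpha_p}(u-v)}{I_j}=\nw{D^{\alpha_p}w}{I_j}$ appearing in the \emph{first} term of \eref{e2.5}. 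These depend on the solution difference $w$ itself, and the hypothesis gives smallness of the data difference only in $S_p$ (namely $\ns{S(t-t_0)(w_0,w_1)}{I}\le\epsilon$), not in the derivative norm $W$; there one has merely $\nwi{D^{\alpha_p}z}\lesssim A'$, which is \emph{not} small. Consequently the per-piece error in a naive $\alpha_p$-level bootstrap is of order $\eta^{p-1}A'$; accumulated over $N\sim(M/\eta)^{2(p-1)}$ pieces (and $\eta$ must shrink as $A'$ grows to absorb the other terms), this blows up. This is why one cannot bootstrap on $\nw{D^{\alpha_p}w}{I_j}$.

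Your description of Lemma~\ref{l2.13}'s role is also off. The chain rule at level $\beta$ does \emph{not} ``let every derivative falling on $\widetilde u$ be replaced by an undifferentiated factor''; the resulting estimate \eref{e2.26} still contains $\nllij{D^\beta\widetilde u}{q}{q}$. What the exotic pair actually buys is the \emph{interpolation} identity \eref{e2.18}, $\nll{D^\beta f}{q}{q}\le C\ns{f}{I}^{1-\theta}\nwi{D^{\alpha_p}f}^{\theta}$: applied to $z$ it turns the hypothesis $\ns{z}{I}\le\epsilon$ together with the merely bounded $\nwi{D^{\alpha_p}z}\lesssim A'$ into a \emph{small} quantity $\epsilon^{1-\theta}(A')^{\theta}$ (this is \eref{e2.21}). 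Thus the right bootstrap pair is $\big(\ns{w}{I_j},\ \nllij{D^\beta w}{q}{q}\big)$, both of which start small, and the exotic dual Strichartz estimates \eref{e2.14}--\eref{e2.15} close the loop from the $L^{\widetilde q'}_{I_j}L^{\widetilde q'}_x$ source norm back onto that same pair. Without this change of norms the continuity/iteration argument cannot close. You should repair the proposal by replacing your paragraph on the ``main obstacle'' with this correct accounting, and by bootstrapping on $\nllij{D^\beta w}{q}{q}$ rather than on any $W$-norm of $w$.
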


A version of this result, in the context of NLS, was first proved in \cite{CKSTTm}. Other versions for NLS appear in \cite{TVm}. A proof of the corresponding result to Theorem 2.11 for NLS, $p=5$ is given in \cite{K}. Using Lemma \ref{l2.13} it readily extends to our case. We will sketch 
the argument now for the reader's convenience. 

\begin{proof}
In the proof it suffices to consider the case $t_0=0$, $I=[0,L]$, $L<+\infty$ and to assume that $u$ exists and then obtain a priori estimates for it. 
After that, an application of Theorem \ref{t2.6} concludes the proof. The first remark is that
\begin{equation}\label{e2.17}
\nwi{D^{\alpha_p}\widetilde u}\leq\widetilde M,\quad\widetilde M=\widetilde M(M,A').
\end{equation}
To see this, split $I=\bigcup_{j=1}^\gamma I_j$, $\gamma=\gamma(M)$ so that on each $I_j$ we have $\ns{\widetilde u}{I_j}\leq\eta$, 
where $\eta$ is to be determined. Let $I_{j_0}=[a_{j_0},b_{j_0}]$, so that the integral equation gives
\begin{multline*}
\widetilde u(t)=S(t)(\widetilde u(a_{j_0}),\partial_t\widetilde u(a_{j_0}))+
\int_{a_{j_0}}^t\frac{\sin\left((t-s)\sqrt{-\triangle}\right)}{\sqrt{-\triangle}}e\;ds-\\
-\int_{a_{j_0}}^t\frac{\sin\left((t-s)\sqrt{-\triangle}\right)}{\sqrt{-\triangle}}F(\widetilde u)\;ds.
\end{multline*}
We then apply Lemma \ref{l2.2} to obtain
\begin{multline*}
\sup_{t\in I_{j_0}}\nhh{\widetilde u(t)}{\partial_t\widetilde u(t)}+\nw{D^{\alpha_p}\widetilde u}{I_{j_0}}\\
\leq CA+C\left\Vert D^{\alpha_p}e\right\Vert_{L_{I_{j_0}}^{4/3}L_x^{4/3}}+
C\left\Vert D^{\alpha_p}F(\widetilde u)\right\Vert_{L_{I_{j_0}}^{4/3}L_x^{4/3}}\\
\leq CA+C\epsilon+C\ns{\widetilde u}{I_{j_0}}^{p-1}\nw{D^{\alpha_p}\widetilde u}{I_{j_0}},
\end{multline*}
where in the last step we have applied our hypothesis on $e$ and \eref{e2.4}. If we then choose $\eta$ so that $C\eta^{p-1}\leq1/2$, \eref{e2.17} follows.

We now choose $\beta$, $q$ and $\widetilde q$ as in Lemma \ref{l2.13}, so that \eref{e2.14} and \eref{e2.15} hold. We also note the following:
\begin{equation}\label{e2.18}
\nll{D^\beta f}{q}{q}\leq C\nsi{f}^{1-\theta}\nwi{D^{\alpha_p}f}^\theta
\end{equation}
and
\begin{equation}\label{e2.19}
\nll{|f|^{p-1}D^\beta f}{\widetilde q'}{\widetilde q'}\leq C\nsi{f}^{p-1}\nll{D^\beta f}{q}{q}.
\end{equation}
In fact, \eref{e2.18} follows from the inequality
\begin{equation}\label{e2.20}
\nl{D^\beta f}{q}\leq C\nl{f}{2(p-1)}^{1-\theta}\nl{D^{\alpha_p}f}{4}^\theta
\end{equation}
by using H\"older's inequality on $I$. \eref{e2.20} in turn follows from complex interpolation. Moreover, \eref{e2.19} follows also from H\"older's 
inequality, using the definitions of $q$, $\widetilde q$.

To carry out the proof now, note that by \eref{e2.17} and \eref{e2.18} we have $\nlli{D^\beta \widetilde u}{q}{q}$ $\leq\widetilde M$. Also, by \eref{e2.18} 
and our hypothesis we have
\begin{equation}\label{e2.21}
\nlli{D^\beta S(t)\left( (u_0-\widetilde u(0), u_1-\partial_t\widetilde u(0))\right)}{q}{q}\leq\epsilon'
\end{equation}
where $\epsilon'\leq\widetilde M\epsilon^\alpha$.

Write $u=\widetilde u+w$, so that $w$ verifies
\begin{equation}\label{e2.22}
\left\{\begin{array}{l}
\partial_t^2w-\triangle w=-(F(\tilde u+w)-F(\tilde u))-e\\ \\
w\big|_{t=0}=u_0-\widetilde u(0)\\ \\
\partial_tw\big|_{t=0}=u_1-\partial_t\widetilde u(0)
\end{array}\right.
\end{equation}
Split now $I=\bigcup_{j=1}^J I_j$, $J=J(M,\eta)$, so that on each $I_j$ we have
\begin{equation}\label{e2.23}
\ns{\widetilde u}{I_j}+\left\Vert D^\beta\widetilde u\right\Vert_{L_{I_j}^qL_x^q}\leq\eta,
\end{equation}
where $\eta>0$ is to be chosen. Set $I_j=[a_j,a_{j+1})$, $a_0=0$, $a_{J+1}=L$. The integral equation on $I_j$ gives:
\begin{multline}
w(t)= S(t-a_j)(w(a_j,\partial_tw(a_j))
-\int_{a_j}^t\frac{\sin\left((t-s)\sqrt{-\triangle}\right)}{\sqrt{-\triangle}}e(s)\;ds-\\
-\int_{a_j}^t\frac{\sin\left((t-s)\sqrt{-\triangle}\right)}{\sqrt{-\triangle}}\left[F(\widetilde u+w)-F(\widetilde u) \right]\;ds.\label{e2.24}
\end{multline}
We apply \eref{e2.18} and Lemma \ref{l2.2} to the second term and Lemma \ref{l2.13} to the third one. Thus,
\begin{multline}
\ns{w}{I_j}+\left\Vert D^{\beta}w\right\Vert_{L_{I_{j}}^{q}L_x^{q}}\leq\ns{S(t-a_j)(w(a_j),\partial_tw(a_j))}{I_j} \\
+\left\Vert D^{\beta}S(t-a_j)(w(a_j),\partial_tw(a_j))\right\Vert_{L_{I_{j}}^{q}L_x^{q}}+C\epsilon_0\\
+C\left\Vert D^{\beta}\left[F(\widetilde u+w)-F(\widetilde u) \right]\right\Vert_{L_{I_{j}}^{\widetilde q'}L_x^{\widetilde q'}}.
\label{e2.25}\end{multline}

For the last term, we use Lemma \eref{e2.3}, $|F'(u)|\approx|u|^{p-1}$, $|F''(u)\approx|u|^{p-2}$, and H\"older's inequality, to obtain
\begin{multline}
\left\Vert D^{\beta}\left[F(\widetilde u+w)-F(\widetilde u) \right]\right\Vert_{L_{I_{j}}^{\widetilde q'}L_x^{\widetilde q'}}
\leq C[\ns{\widetilde u}{I_j}^{p-1}+\ns{w}{I_j}^{p-1}]\left\Vert D^{\beta}w\right\Vert_{L_{I_{j}}^{q}L_x^{q}}\\
+
\left[\ns{\widetilde u}{I_j}^{p-2}+\ns{w}{I_j}^{p-2}\right]\left[\left\Vert D^{\beta}\widetilde u\right\Vert_{L_{I_{j}}^{q}L_x^{q}}+
\left\Vert D^{\beta}w\right\Vert_{L_{I_{j}}^{q}L_x^{q}}\right]\ns{w}{I_j}\\
\leq C(\eta)[\ns{w}{I_j}+\nllij{D^\beta w}{q}{q}]\\
+ C[\ns{w}{I_j}+\nllij{D^\beta w}{q}{q}]^p,
\label{e2.26}\end{multline}
where $C(\eta)\to0$ with $\eta\to0$.

Combining now \eref{e2.25} and \eref{e2.26}, choosing $\eta$ so small that $C(\eta)\leq1/3$ and defining
\begin{multline*}
\gamma_j=\ns{S(t-a_j)((w(a_j),\partial_tw(a_j)))}{\R}\\+\
+\nllij{D^\beta S(t-a_j)((w(a_j),\partial_tw(a_j)))}{q}{q}
+C\epsilon_0,
\end{multline*}
we see that
\begin{multline}\label{e2.27}
\ns{w}{I_j}+\nllij{D^\beta w}{q}{q}\leq\frac{3}{2}\gamma_j+
C[\ns{w}{I_j}+\nllij{D^\beta w}{q}{q}]^p.
\end{multline}

Note that the choice of $\eta$ depends only on $p$. Now, a standard continuity argument shows that there exists $C_0$, which depends only on $C$ 
(which depends only on $p$) so that, if $\gamma_j\leq C_0$, we have
\begin{equation}\label{e2.28}
\ns{w}{I_j}+\nllij{D^\beta w}{q}{q}\leq3\gamma_j
\end{equation}
and
\begin{equation}\label{e2.29}
C\left[\ns{w}{I_j}+
\nllij{D^\beta w}{q}{q}
\right]^p\leq3\gamma_j.
\end{equation}
Thus, if $\gamma_j\leq C_0$, we have
\begin{multline}\label{e2.30}
\ns{w}{I_j}+\nllij{D^\beta w}{q}{q}\leq 3\left[\ns{S(t-a_j)((w(a_j),\partial_tw(a_j)))}{\R}
\right.\\\left.
+\nllij{D^\beta S(t-a_j)((w(a_j),\partial_tw(a_j)))}{q}{q}
\right]
+3C\epsilon_0.
\end{multline}
To be able to continue in the iteration process, put $t=a_{j+1}$ in \eref{e2.24} and apply $S(t-a_{j+1})$ and use trigonometric identities. We then have
\begin{multline}\label{e2.31}
S(t-a_{j+1})((w(a_{j+1}),\partial_tw(a_{j+1})))=S(t-a_{j})((w(a_{j}),\partial_tw(a_{j})))\\
-\int_{a_j}^{a_{j+1}}\frac{\sin\left((t-s)\sqrt{-\triangle}\right)}{\sqrt{-\triangle}}e(s)\;ds\\
-\int_{a_j}^{a_{j+1}}\frac{\sin\left((t-s)\sqrt{-\triangle}\right)}{\sqrt{-\triangle}}\left[F(\widetilde u+w)-F(\widetilde u)\right]\;ds
\end{multline}
Applying the same argument as before, we see that
\begin{multline}\label{e2.32}
\ns{S(t-a_{j+1})((w(a_{j+1}),\partial_tw(a_{j+1})))}{\R}\\
+\nllij{D^\beta S(t-a_j)((w(a_j),\partial_tw(a_j)))}{q}{q}\\
\leq
\left[\ns{S(t-a_j)((w(a_j),\partial_tw(a_j)))}{\R}\right.\\\left.+
\nllij{D^\beta S(t-a_j)((w(a_j),\partial_tw(a_j)))}{q}{q}
+C\epsilon_0\right]\\
+C(\eta)\left[\ns{w}{I_j}+
\nllij{D^\beta w}{q}{q}
\right]\\+C\left[\ns{w}{I_j}+
\nllij{D^\beta w}{q}{q}
\right]^p.
\end{multline}
Again taking $\eta$ small, depending only on $p$, using \eref{e2.28} and \eref{e2.29}, we find that, if $\gamma_j\leq C_0$, we have $\gamma_{j+1}\leq10\gamma_j$.
Recall that, by assumption and \eref{e2.21}, we have $\gamma_0\leq\epsilon_0'+\epsilon_0$. Iterating, we see that $\gamma_j\leq10^j(\epsilon_0'+\epsilon_0)$
if $\gamma_j\leq C_0$. If we have $\epsilon_0$ so small that $10^{J+1}(\epsilon_0'+\epsilon_0)\leq C_0$, the condition $\gamma_j\leq C_0$ always holds, 
so that using this, together with \eref{e2.28}, we obtain the desired estimate for $\nsi{u}$. The second estimate follows from the first one, using a 
similar argument. This concludes the proof of Theorem \ref{t2.16}.  
\end{proof}

\begin{rem}\label{r2.33}
Theorem \ref{t2.16} yields the following continuity fact: let $(u_0,u_1)\in\hh$, $\nhh{u_0}{u_1}\leq A$, and let $u$ be the solution of \eref{cp}, 
with maximal interval of existence $(-\tmu,\tpu)$. Let $(u_{0,n},u_{1,n})\to(u_0,u_1)$ in $\hh$, and let $u_n$ be the corresponding solution, with maximal 
interval of existence $\left(-T_-((u_{0,n},u_{1,n})), T_+((u_{0,n},u_{1,n}))\right))$. 
Then $\tmu\leq \mbox{liminf} {T_-((u_{0,n},u_{1,n}))}$ and
$\tpu\leq \mbox{liminf} {T_+((u_{0,n},u_{1,n}))}$ as $n$ goes to infinity. Moreover, for each $t\in(-\tmu, \tpu)$, $(u_n(t),\partial_tu_n(t))\to (u(t),\partial_tu(t))$ in $\hh$.
(For the proof see Remark 2.17 in \cite{KM1}).
\end{rem}

\begin{rem}\label{r2.34}
Theorem \ref{t2.16} can also be used to show that if $K\subset\hh$, with $\overline K$ compact in $\hh$, we can find $T_{+,\overline K}>0$, $T_{-,\overline K}>0$,
such that, for all $(u_0,u_1)\in K$, we have $\tpu>T_{+,\overline K}$, $\tmu>T_{-,\overline K}$. Moreover, the family
\begin{displaymath}
\left\{(u(t),\partial_t u(t)):t\in[-T_{-,\overline K},T_{+,\overline K}], (u_0,u_1)\in K\right\}
\end{displaymath}
has compact closure in $C([-T_{-,\overline K},T_{+,\overline K}];\hh)$ and hence it is equicontinuous and bounded.
\end{rem}

We conclude this section with some results that are useful in connection with the finite speed of propagation.

Recall (see \cite{SS}) that, as a consequence of the finite speed of propagation, if $(u_0,u_1)$, $(u_0',u_1')$ verify the conditions of Theorem \ref{t2.6}, 
then the corresponding solutions agree on $\R^3\times I\cap\bigcup_{0\leq t\leq a}\left[B(x_0,(a-t))\times\{t\}\right]$ if $(u_0,u_1)=(u_0',u_1')$ on 
$B(x_0,a)$. This is proved, for instance, in Remark 2.12 of \cite{KM2}, for the case $p=5$, but given the proof of Theorem \ref{t2.6}, it also holds for $5<p<\infty$.
Similar conclusions can be drawn for $t<0$.

\begin{lem}\label{l2.35}
Let $\psi_M$ be radial, $\psi_M\in C^\infty(\R^3)$, with $\psi_M(x)=\psi(\frac{x}{M})$, where $0\leq\psi\leq1$, $\psi\equiv0$ for $|x|<1$,
$\psi\equiv1$ for $|x|\geq2$, and $\psi$ and all its derivatives are bounded. Then there exist a constant $C$, which depends only on $p$ and is 
independent of $M$, so that
\begin{equation}\label{e2.36}
\nhh{\psi_Mv_0}{\psi_Mv_1}\leq C\nhh{v_0}{v_1}.
\end{equation}
\end{lem}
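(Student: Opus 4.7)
The plan is to reduce to the case $M=1$ by the dilation invariance of the Sobolev norms, and then establish a multiplier bound $\|\psi f\|_{\dot H^s}\le C\|f\|_{\dot H^s}$ for the fixed cutoff $\psi$ and both indices $s\in\{s_p-1,s_p\}$. The elementary identities $\|f(M\cdot)\|_{\dot H^s}=M^{s-3/2}\|f\|_{\dot H^s}$ and $(\psi_M v)(My)=\psi(y)v(My)$ yield $\|\psi_M v\|_{\dot H^s}/\|v\|_{\dot H^s}=\|\psi g\|_{\dot H^s}/\|g\|_{\dot H^s}$ for $g(y):=v(My)$, so the $M$-dependence vanishes. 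We then treat $\psi$ as a fixed function with $\|\psi\|_\infty\le 1$ and $\nabla\psi\in C_c^\infty$ supported in $\{1\le|x|\le2\}$; under the hypothesis $p>5$ one has $s_p-1\in(0,1/2)$ and $s_p\in(1,3/2)$.

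For the low index $s=s_p-1\in(0,1/2)$, I would use the Gagliardo seminorm. Splitting
\begin{equation*}
|\psi(x)f(x)-\psi(y)f(y)|^2\le C\psi(x)^2|f(x)-f(y)|^2+C|f(y)|^2|\psi(x)-\psi(y)|^2,
\end{equation*}
the first term integrates to $\le C\|f\|_{\dot H^s}^2$. For the second, the kernel $K(y):=\int|\psi(x)-\psi(y)|^2|x-y|^{-3-2s}\,dx$ is uniformly bounded (from $|\psi(x)-\psi(y)|\le\min(1,C|x-y|)$) and decays like $|y|^{-3-2s}$ as $|y|\to\infty$, because $\psi$ is constant outside $\{|x|\le 2\}$. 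The integral $\int|f|^2 K\,dy$ then splits into $|y|\le R$, controlled via Sobolev embedding $\dot H^s\hookrightarrow L^{6/(3-2s)}$ and H\"older, and $|y|>R$, controlled via $|y|^{-3-2s}\le C|y|^{-2s}$ together with the Hardy--Sobolev inequality $\int|f|^2|y|^{-2s}\,dy\le C\|f\|_{\dot H^s}^2$.

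For the high index $s=s_p\in(1,3/2)$, I would use $\|\psi f\|_{\dot H^{s_p}}\approx\|\nabla(\psi f)\|_{\dot H^{s_p-1}}$ together with $\nabla(\psi f)=\psi\nabla f+f\nabla\psi$. The first piece $\psi\nabla f$ is handled by applying the low-index estimate componentwise to $\nabla f\in\dot H^{s_p-1}$. For $f\nabla\psi$, apply the Gagliardo splitting at order $s_p-1$ once more: the $|f|^2|\nabla\psi(x)-\nabla\psi(y)|^2$ piece is treated exactly as in the low-index case (with $\nabla\psi\in C_c^\infty$ playing the role of $\psi$), while for the $|\nabla\psi(y)|^2|f(x)-f(y)|^2$ piece $y$ is restricted to the compact support of $\nabla\psi$. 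The short range $|x-y|\le 1$ is then handled by writing $f(x)-f(y)=\int_0^1\nabla f(y+t(x-y))\cdot(x-y)\,dt$, changing variables $z=y+t(x-y)$, and reducing to $\|\nabla f\|_{L^2(B_3)}^2\le C\|f\|_{\dot H^{s_p}}^2$ (via Sobolev embedding and H\"older); the long range $|x-y|>1$ is bounded using $|f(x)-f(y)|^2\le 2|f(x)|^2+2|f(y)|^2$ with Hardy--Sobolev.

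The main obstacle is the $f\nabla\psi$ piece in the high-index case: because $\dot H^{s_p-1}$ is not globally controlled by $\dot H^{s_p}$, one cannot simply bound the Gagliardo seminorm of $f$ at order $s_p-1$ by $\|f\|_{\dot H^{s_p}}$. The compact support of $\nabla\psi$ must be exploited to localize everything into a bounded region, where a first-order Taylor expansion of $f$ combined with Sobolev embedding closes the estimate; the index constraints $s_p-1<1/2$ and $s_p<3/2$ are precisely what ensure integrability throughout.
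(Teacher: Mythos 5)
Your proposal is correct, but it takes a genuinely different route from the paper's. Both proofs reduce to $M=1$ by scaling, and both split $\nabla(\psi v_0)=\psi\nabla v_0+v_0\nabla\psi$ for the high-index component, but at that point they diverge: the paper invokes the Kenig--Ponce--Vega fractional Leibniz and commutator estimates (Theorems A.8 and A.12 of that reference) and closes each term with H\"older, Sobolev embedding, and the observation that $D^\alpha\phi\in L^2\cap L^\infty$ for $\phi\in C_0^\infty$; you instead work directly from the Gagliardo-seminorm characterization of $\dot H^s$ for $0<s<1$ and estimate the kernel $K(y)=\int|\psi(x)-\psi(y)|^2|x-y|^{-3-2s}\,dx$ by hand. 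Your route is more elementary and self-contained --- no fractional product rule as a black box --- and it makes visible exactly where the geometry of $\psi$ enters ($\nabla\psi$ compactly supported, $\psi\equiv0$ near the origin, $\psi\equiv1$ at infinity), which is the real content since $\psi$ is not a H\"ormander--Mikhlin multiplier and boundedness on $\dot H^s$ is genuinely a statement about tails; the paper's route is shorter once the KPV machinery is available. One imprecision worth fixing: for the $f\nabla\psi$ piece you say the $|f(x)|^2|\nabla\psi(x)-\nabla\psi(y)|^2$ integral is ``treated exactly as in the low-index case,'' but there you closed with Hardy--Sobolev at index $s=s_p-1$, whereas here $f$ lies only in $\dot H^{s_p}$, and $\|f\|_{\dot H^{s_p-1}}$ is not controlled by $\|f\|_{\dot H^{s_p}}$. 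The argument still works because the kernel decays like $|x|^{-1-2s_p}$, which is better than $|x|^{-2s_p}$, so Hardy at index $s_p$ (together with Sobolev on a bounded set) closes the estimate --- but this upgrade of the Hardy index should be stated explicitly rather than by analogy.
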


\begin{proof}
By scaling, it suffices to prove \eref{e2.36} for $M=1$. Recall that $1<s_p<3/2$. Set $\alpha=s_p-1$, so that $0<\alpha<1/2$. 
Let us first consider $\psi v_0$. Then, $\nh{\psi v_0}\approx\nl{D^\alpha\nabla(\psi v_0)}{2}$. But,
\begin{displaymath}
\nl{D^\alpha\nabla(\psi v_0)}{2}\leq\nl{D^\alpha(\nabla\psi v_0)}{2}+\nl{D^\alpha(\psi\nabla v_0)}{2}=I+II.
\end{displaymath}
Using theorem A.8 in \cite{KPV},
\begin{displaymath}
I\leq\nl{\nabla\psi\cdot D^\alpha v_0}{2}+\nl{D^\alpha\nabla\psi\cdot v_0}{2}+C\nl{D^\alpha v_0}{q}\nl{\nabla\psi}{r},\quad\frac{1}{p}+\frac{1}{r}=\frac{1}{2}.
\end{displaymath}
Choose $q=6$, $r=3$ and note that, since $\nabla D^\alpha v_0\in L^2$, $D^\alpha v_0\in L^6$. Also, $\nabla\psi\in C_0^\infty(\R^3)$ so 
$\nl{\nabla\psi}{3}<\infty$. This allows us to control the first and last terms in the right hand side. For the second term, note that 
$v_0\in L^{\frac{3}{2}(p-1)}$ by Sobolev embedding, since $v_0\in\dot H^{s_p}$. Using H\"older's inequality, we can control the second term, using 
$\frac{1}{q_1}+\frac{2}{3(p-1)}=\frac{1}{2}$. We thus need to show that if $\phi\in C_0^\infty$, $\nl{D^\alpha\phi}{q_1}\leq\infty$. But it is easy to see, using Fourier transform, that $D^\alpha\phi\in L^2\cap L^\infty$, which gives our bound for I.
\begin{displaymath}
II\leq\nl{D^\alpha\left((1-\psi)\nabla v_0\right)}{2}+\nl{D^\alpha\nabla v_0}{2}.
\end{displaymath}
Thus, if $\phi=(1-\psi)$, $\phi\in C_0^\infty$ and we need to bound $\nl{D^\alpha(\phi\nabla v_0)}{2}$. Again, using Theorem A.12 in \cite{KPV}, we bound this by the sum
\begin{displaymath}
\nl{\phi D^\alpha\nabla v_0}{2}+\nl{(D^\alpha\phi)\nabla v_0}{2}+C\nl{D^\alpha\nabla v_0}{2}\nl{\phi}{\infty}.
\end{displaymath}
Clearly, the first and third term are controlled. For the second one, $\nabla v_0\in L^r$, $\frac{1}{r}=\frac{1}{3}+\frac{2}{3(p-1)}$, 
$r\geq2$ and H\"older's inequality finishes the proof. (Here $r\geq2$ since $p\geq5$).

For the term $\nl{D^\alpha(\psi v_1)}{2}$, we again bound it by $\nl{D^\alpha v_1}{2}+\nl{D^\alpha(\phi v_1)}{2}$ which is easily controlled by using Theorems A.8 and A.12 in \cite{KPV}.

\end{proof}

\begin{cor}\label{c2.37}
Assume that $(v_0,v_1)\in K\subset\hh$, where $\overline K$ is compact in $\hh$. Let $\psi_M$ be as in Lemma \ref{l2.35}. Consider the solution $v_M$, given by Theorem \ref{t2.6}, to \eref{cp}, with initial data $(\psi_M v_0,\psi_M v_1)$. Then, given $\epsilon>0$, small, there exists $M(\epsilon)>0$, such that , for all $M\geq M(\epsilon)$, all $(v_0,v_1)$ in $K$, we have that $v_M$ is globally defined (i.e. $I=(-\infty,+\infty)$) and
\begin{equation}\label{e2.38}
\sup_{\tau\in(-\infty,+\infty)}\nhh{v_M(\tau)}{\partial_t v_M(\tau)}\leq\epsilon.
\end{equation}
\end{cor}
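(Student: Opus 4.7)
The plan is to reduce the corollary to the small-data global theory by showing that, uniformly for $(v_0,v_1)\in K$, the cut-off data $(\psi_Mv_0,\psi_Mv_1)$ tends to zero in $\hh$ as $M\to\infty$. Given such uniform smallness, Remark \ref{r2.8} (small-data global existence) together with the Strichartz bound of Lemma \ref{l2.2} will immediately deliver both the global existence of $v_M$ and the desired bound $\sup_\tau\nhh{v_M(\tau)}{\partial_tv_M(\tau)}\le\epsilon$.

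The main work will be to prove that $\nhh{\psi_Mv_0}{\psi_Mv_1}\to0$ uniformly on $K$. First, I will establish this for a fixed datum $(v_0,v_1)\in\hh$ by a density argument: for any $\eta>0$, pick $(\tilde v_0,\tilde v_1)\in C_0^\infty(\R^3)\times C_0^\infty(\R^3)$ with $\nhh{v_0-\tilde v_0}{v_1-\tilde v_1}\le\eta/(2C)$, where $C$ is the constant of Lemma \ref{l2.35}. Because $\tilde v_0,\tilde v_1$ are compactly supported, we have $\psi_M\tilde v_0\equiv\psi_M\tilde v_1\equiv 0$ for all sufficiently large $M$. Combining this with Lemma \ref{l2.35} applied to the difference $(v_0-\tilde v_0,v_1-\tilde v_1)$ yields $\nhh{\psi_Mv_0}{\psi_Mv_1}\le\eta/2$ for $M$ large.

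To upgrade to uniformity over $(v_0,v_1)\in K$, I will use compactness of $\overline K$: cover it by finitely many $\hh$-balls of radius $\eta/(2C)$ centered at points $(v_0^j,v_1^j)\in K$, $j=1,\dots,N$, and for each center apply the previous step to obtain a threshold $M_j$. Setting $M(\eta)=\max_j M_j$, for any $(v_0,v_1)\in K$ I choose an index $j$ with $\nhh{v_0-v_0^j}{v_1-v_1^j}\le\eta/(2C)$ and invoke Lemma \ref{l2.35} on the difference, obtaining $\nhh{\psi_Mv_0}{\psi_Mv_1}\le\eta$ for all $M\ge M(\eta)$.

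Finally, given $\epsilon>0$, I will apply the above with $\eta=\min\bigl(\tilde\delta,\epsilon/C'\bigr)$, where $\tilde\delta$ is the small-data threshold from Remark \ref{r2.8} and $C'$ is the Strichartz constant guaranteeing $\sup_t\nhh{u(t)}{\partial_tu(t)}\le C'\nhh{u_0}{u_1}$ for subthreshold data (this last bound falls out of a standard fixed-point iteration using Lemma \ref{l2.2} and the nonlinear estimate \eref{e2.4}). For $M\ge M(\eta)$, the data $(\psi_Mv_0,\psi_Mv_1)$ has $\hh$-norm at most $\tilde\delta$, so $v_M$ is global and $\sup_\tau\nhh{v_M(\tau)}{\partial_tv_M(\tau)}\le C'\eta\le\epsilon$, as required. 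The only real conceptual input is Lemma \ref{l2.35}: since $s_p>1$ is fractional, the commutator with the cutoff is nontrivial, and without the multiplier bound one could not push the $\dot H^{s_p}$ norm through $\psi_M$; given that lemma, the rest reduces to a standard density plus compactness argument.
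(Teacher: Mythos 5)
Your proposal is correct and follows essentially the same route the paper takes: reduce to showing $\nhh{\psi_Mv_0}{\psi_Mv_1}\to0$, prove this for fixed data via density of $C_0^\infty\times C_0^\infty$ in $\hh$ together with the multiplier bound \eref{e2.36}, upgrade to uniformity over $K$ by compactness, and invoke small-data global theory. The only mild divergence is that the paper folds the uniformity step into an appeal to the bounds in Theorems \ref{t2.6} and \ref{t2.16} (the perturbation theorem), whereas you bypass Theorem \ref{t2.16} entirely and obtain uniformity directly by covering $\overline K$ with finitely many balls and applying Lemma \ref{l2.35} to differences; this is a slightly more elementary and self-contained way to run the same argument, and buys you the clarity of seeing that only the linear bound \eref{e2.36} plus compactness is needed for the reduction, with the perturbative machinery reserved only for reading off global existence and the Strichartz bound from subthreshold smallness.
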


\begin{proof}
Using the compactness of $\overline K$, the bounds in Theorem \ref{t2.6} and Theorem \ref{t2.16} and \eref{e2.36}, it suffices to show that, for fixed $(v_0,v_1)\in\hh$, we have that $\lim_{M\to\infty}\nhh{\psi_M v_0}{\psi_M v_1}=0$. But this is immediate from \eref{e2.36}, using the density of
$C_0^\infty\times C_0^\infty$ in $\hh$.
\end{proof}

\section{Decay estimates for compact, radial solutions}

In this section, $p\ge5$. We establish now our main decay estimates for compact, radial solutions in the case they are globally defined, which show that they ``break the scaling''. \\
Thus, consider a solution $u$ to \eref{cp}  with $(u_0,u_1)$ radially symmmetric. (Solution is understood in the sense of Definition \ref{d2.10}). Because of the proof of Theorem \ref{t2.6}, $u$ is also radially symetric. We will assume that 
$$t_0=0, \ \ \tpu=+\infty, \ \ \tmu=+\infty,$$ 
and that $u$ has the following ``compactness property'' or nondispersive property:
\begin{multline}\label{e3.1}
\text{There exists $\lambda(t)\geq A_0>0$, $t\in(-\infty,\infty)$ such that}\\
K=\left\{\vec v(x,t)=\left(\frac{1}{\lambda(t)^{2/p-1}}u\left(\frac{x}{\lambda(t)},t\right),\right.\right.
\quad\quad\quad\quad\quad\quad\quad\quad\quad\quad\quad\quad\quad\quad\\
\quad\quad\quad\quad\quad\quad\quad\quad\left.\left.
\frac{1}{\lambda(t)^{2/(p-1)+1}}\partial_tu\left(\frac{x}{\lambda(t)},t\right)\right):t\in(-\infty,\infty)\right\}\\
\text{has the property that $\overline K$ is compact in $\hh$}.
\end{multline}
Our main estimate is:

\begin{thm}\label{t3.2}
Let $u$ be a solution to \eref{cp}, with $(u_0,u_1)$ radially symmetric. 
Assume that $t_0=0$, $\tpu=+\infty$, $\tmu=+\infty$ and that $u$ has the ``compactness property'' \eref{e3.1}. \\
Then, there exists constant
$C_0>0$  depending only on $A_0$, $p$, $\overline K$, so that, for all $t\in\R$, we have

\begin{equation}\label{e3.3}
\begin{array}{c}
\displaystyle|u(x,t)|\leq\frac{C_0}{|x|},   \\
\displaystyle(\int_{r\ge|x|}|r\partial_ru(r,t)|^m dr)^{\frac1m} \leq \frac{C_0}{|x|^{1-a}}, \\
\displaystyle(\int_{r\ge|x|}|r\partial_t u(r,t)|^m dr)^{\frac1m}\leq \frac{C_0}{|x|^{p(1-a)}},
\end{array}
\end{equation}
for all $|x|\geq 1$, where $m=\frac{p-1}2=\frac1a$.
\end{thm}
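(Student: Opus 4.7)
\medskip

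\textbf{Step 1: Uniform scale-invariant decay from compactness.} My starting point would be to use the compactness of $\overline K$, together with the lower bound $\lambda(t)\ge A_0>0$, to extract uniform-in-$t$ tail estimates. For any $\epsilon>0$ there is $R(\epsilon)$ such that
\begin{equation*}
\Bigl(\int_{|y|\ge R}|\nabla u(y,t)|^{2}\,dy\Bigr)^{1/2}+\Bigl(\int_{|y|\ge R}|\partial_{t}u(y,t)|^{2}\,dy\Bigr)^{1/2}+\bigl\|u(\cdot,t)\bigr\|_{\dot H^{s_p}(|y|\ge R)}<\epsilon
\end{equation*}
uniformly in $t$. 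Combining with the radial Strauss-type embedding $\dot H^{s_p}_{\mathrm{rad}}(\R^{3})\hookrightarrow |x|^{-(3/2-s_{p})}L^{\infty}$, this gives the preliminary bound
\begin{equation*}
|u(x,t)|\le \frac{C}{|x|^{2/(p-1)}},\qquad |x|\ge r_{0},
\end{equation*}
together with analogous quantitative decay for $L^{2}$-norms of $\nabla u$ and $\partial_{t}u$ on exterior regions. This is the scale-invariant estimate I want to beat.

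\textbf{Step 2: Passage to the $1$D equation and the $\partial_{r}\pm\partial_{t}$ vector fields.} Writing $w(r,t)=r\,u(r,t)$, radial symmetry turns \eqref{cp} into a one-dimensional wave equation of the form $(\partial_{t}^{2}-\partial_{r}^{2})w=\mp r^{-(p-1)}|w|^{p-1}w$ on $\{r>0\}$, with $w(0,t)=0$. I then introduce the null derivatives $L_{\pm}=\partial_{r}\pm\partial_{t}$ and compute $L_{\mp}(L_{\pm}w)$ using the equation, which (as announced in the cited Lemma \ref{l3.11}) yields exact transport identities for $L_{\pm}w$ along the characteristics $t\mp r=\text{const}$. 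Integrating $L_{+}w$ from spatial infinity along an outgoing ray, and similarly $L_{-}w$ along an incoming ray, produces bounds of the schematic form
\begin{equation*}
|L_{\pm}w(r,t)|\le |L_{\pm}w|_{\infty}+\int_{\Gamma_{\pm}(r,t)} s^{-(p-1)}|w(s,\tau)|^{p-1}|w(s,\tau)|\,d\sigma,
\end{equation*}
where $\Gamma_{\pm}$ denotes the appropriate characteristic. Using the bound from Step 1 on the nonlinearity together with the boundary decay of $L_{\pm}w$ at infinity (supplied by the tail smallness of $\overline K$), these integrals converge and give quantitative gains in the pointwise decay of $w=ru$.

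\textbf{Step 3: Convexity argument and bootstrap iteration.} Suppose we already know $|u(x,t)|\le C/|x|^{\alpha}$ uniformly in $t$ for some $\alpha\in[2/(p-1),1]$. Plugging this into the $1$D representation from Step 2 gives an integral of $r^{-(p-1)}|w|^{p-1}|w|\lesssim r^{-(p-1)-\alpha(p-1)+1}\cdot |w|$ along the characteristic, producing a faster rate $\alpha'>\alpha$ provided $p\ge 5$. The ``convexity'' mechanism enters through the linear wave-equation identity of Lemma \ref{l3.16}: expressing $u$ at the point $(x,t)$ via the linear evolution backwards plus a Duhamel integral, and exploiting radial positivity of the fundamental solution in $1$D (which yields monotonicity/convexity for the $r$-weighted quantity $w$), lets me convert the characteristic integrals into strict quantitative gains while preserving uniformity in $t$. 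Iterating this gain finitely many times, starting from $\alpha=2/(p-1)$, saturates at $\alpha=1$, which is precisely the threshold where the gain equation stops improving; this yields the first estimate in \eqref{e3.3}.

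\textbf{Step 4: From pointwise to integrated $L^{m}$ bounds, and the main obstacle.} Once $|u(x,t)|\le C_{0}/|x|$ is in hand, I would return to the transport identities for $L_{\pm}w$ and integrate them against the weight $r^{m-2}\,dr$ with $m=(p-1)/2$. The cancellation $r\partial_{r}u=\partial_{r}w-u$ together with $r\partial_{t}u=\partial_{t}w$ turns these into the two integrated estimates for $\partial_{r}u$ and $\partial_{t}u$ in \eqref{e3.3}; the different powers $1-a$ versus $p(1-a)$ come from how many factors of $r^{-(p-1)}|w|^{p-1}$ get absorbed in the weight. The main obstacle in this program, and where I would expect the delicate work, is Step 3: controlling the iteration so that the gains are genuinely strict and uniform in $t$ (not just in the initial data), and handling the endpoint $\alpha=1$ where scaling is broken. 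This is the place that requires the ``convexity'' from Lemma \ref{l3.16}, and where the hypothesis $p\ge 5$ is sharp, since the iteration gain $\alpha\mapsto\alpha'$ degenerates exactly at $p=5$, $\alpha=1$, consistent with the ground state $W$ being an endpoint example.
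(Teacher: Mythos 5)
Your overall skeleton matches the paper's: pass to $w=ru$, exploit the null derivatives $\partial_r\pm\partial_t$ along characteristics, and bootstrap to improve the decay rate. However, your Step~3 --- which you yourself identify as the delicate part --- does not capture the actual mechanism, and as written it would fail to start the bootstrap.

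First, the starting point is weaker than you need. Compactness gives not merely $|u(x,t)|\le C/|x|^{a}$ (with $a=\frac{2}{p-1}$), but $|u(r,t)|\le g_1(r)/r^{a}$ with $g_1(r)\to 0$ as $r\to\infty$, together with matching smallness for the null-derivative $L^m$ norms. This quantitative smallness is what makes the superlinear error terms negligible; a mere bound $C/r^a$ is scale-invariant and there is no way to ``plug it in'' and gain. Indeed, running your Step~3 with $\alpha=a$ exactly: the Duhamel term bounds $L_\pm w$ by $\sim r^{-a}$, and integrating that over a characteristic segment of length $\sim r$ produces $r^{1-a}$, which is \emph{exactly} the size of $w=ru$ you started with --- zero gain. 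So the naive iteration does not get off the ground at the scale-invariant rate.

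Second, the ``convexity'' in Lemma~\ref{l3.16} is not about positivity of the one-dimensional fundamental solution or monotonicity of $w$. It is a purely algebraic fact in the d'Alembert representation: writing $r_0 u(r_0,t_0)$ via the 1D formula at backwards time $r_0/2$, the two endpoint terms at radii $\tfrac{3}{2}r_0$ and $\tfrac{1}{2}r_0$ contribute, after normalizing by $r_0^a$, a coefficient
\[
\frac{1}{2}\left[\left(\tfrac{3}{2}\right)^{1-a}+\left(\tfrac{1}{2}\right)^{1-a}\right],
\]
and the decisive observation is that this is strictly less than $1$ because $x\mapsto x^{1-a}$ is strictly concave (recall $0<1-a<1$). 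The velocity and Duhamel contributions are absorbed as $C_p\,g_1(r_0/2)^p$, which is negligible for $r_0$ large precisely because $g_1\to 0$. This yields $g_1(r_0)\le(1-\theta_p)g_1(r_0/2)$ for $r_0$ large, hence a genuine polynomial rate $g_1(r)\le C/r^\beta$ for some $\beta>0$. Only \emph{after} this seed rate is in hand does the iteration you describe (improving the exponent through a $\gamma$-weighted splitting) apply; and it takes infinitely many steps with $\beta_n\uparrow 1-a$, after which one more argument --- showing $\int_1^\infty|\partial_r w|<\infty$ via the $L^m$ null-derivative bounds and a geometric sum --- upgrades $|u|\le C_\epsilon r^{-1+\epsilon}$ to $|u|\le C/r$ exactly. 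Your ``finitely many times'' and ``the iteration gain degenerates at $p=5$, $\alpha=1$'' are not how the argument closes; the iteration limit is $1-a$ for all $p>3$, and the role of $p\ge 5$ and of $W$ is simply that $1/|x|$ decay is attained and cannot be improved at $p=5$.

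In short: the route is the same, but the proposal misses the concavity inequality that powers Lemma~\ref{l3.16}, misses the need for the qualitative smallness $g_1(r)\to 0$ to absorb the superlinear error, and sketches a bootstrap that stalls at the scale-invariant exponent without these ingredients.
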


Note that, in the defocusing case, Proposition 5.4 below shows that $u$ as in Theorem 3.1 must be $0$. Nevertheless, Theorem 3.1 is a crucial step in the proof of Proposition 5.4. What the situation is in the focusing case is unclear at the moment. Note that for $p=5$ and in the focusing case, $$W(x)=\left(1+|x|^2/3\right)^{-\frac12}$$ is a non-dispersive solution and thus our estimate is sharp.
%

In order to prove Theorem 3.1, we need some preliminary estimates. In the sequel, $r=|x|$, and we will sometimes, by abuse of notation, write 
$u(r,t)=u(x,t)$, when $u(-,t)$ is radially symmetric.



\begin{lem}\label{l3.4}
Let $\frac12>\beta\geq0$, $\beta=\frac12-\frac1m$. \\
Then, there exists $C=C(\beta)$ such that for all $\phi$ radial in $\R^3$,
\begin{displaymath}
\n{r^{1-\frac2m}\phi}_{L^m} \leq C\n{\phi}_{\dot H^{\beta}}.
\end{displaymath} 
Moreover, if $1\leq\beta<\frac32$, we have for $\beta=\frac32-\frac1m$
$$
\n{r^{1-\frac2m}\partial_r\phi}_{L^m} \leq C\n{\phi}_{\dot H^{\beta}}
,$$
$$
\n{r^{\frac1m}\phi}_{L^\infty} \leq C\n{\phi}_{\dot H^{\beta}}
.$$
\end{lem}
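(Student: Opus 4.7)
The plan is to reduce all three estimates to well-known one-dimensional Sobolev and Hardy inequalities via the substitution $w(r)=r\phi(r)$, extended to an odd function on $\R$. For radial $\phi$ on $\R^3$, spherical integration gives $\|w\|_{L^2(\R)}^2=(2\pi)^{-1}\|\phi\|_{L^2(\R^3)}^2$; integrating the cross term by parts yields $\|w'\|_{L^2(\R)}^2=(2\pi)^{-1}\|\nabla\phi\|_{L^2(\R^3)}^2$; and since $\Delta\phi=r^{-1}(r\phi)''$ for radial $\phi$, one also gets $\|w''\|_{L^2(\R)}^2=(2\pi)^{-1}\|\Delta\phi\|_{L^2(\R^3)}^2$. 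Complex interpolation between these three integer cases then yields $\|\phi\|_{\dot H^\beta(\R^3)}\approx\|w\|_{\dot H^\beta(\R)}$ for every $\beta\in[0,2]$, covering both ranges $[0,1/2)$ and $[1,3/2)$ appearing in the lemma.

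For the first estimate, spherical integration gives
\[
\int_{\R^3}|r^{1-2/m}\phi(x)|^m\,dx=4\pi\int_0^\infty|w(r)|^m\,dr=2\pi\|w\|_{L^m(\R)}^m,
\]
and the classical one-dimensional embedding $\dot H^\beta(\R)\hookrightarrow L^m(\R)$, valid precisely when $\tfrac{1}{m}=\tfrac{1}{2}-\beta$ with $0\le\beta<\tfrac{1}{2}$, closes the argument. For the second estimate, with $1\le\beta<\tfrac{3}{2}$ and $\tfrac{1}{m}=\tfrac{3}{2}-\beta$, differentiate $\phi=w/r$ to get $r\partial_r\phi=w'(r)-w(r)/r$, so
\[
\int_{\R^3}|r^{1-2/m}\partial_r\phi|^m\,dx=4\pi\int_0^\infty|w'(r)-w(r)/r|^m\,dr.
\]
The contribution of $w'$ is controlled by shifting the 1D Sobolev embedding by one derivative ($\dot H^{\beta-1}(\R)\hookrightarrow L^m(\R)$, since $\tfrac{1}{m}=\tfrac{1}{2}-(\beta-1)$), while the contribution of $w/r$ is handled by the one-dimensional Hardy inequality $\|w/r\|_{L^m(0,\infty)}\le C\|w'\|_{L^m(0,\infty)}$, valid because $w(0)=0$ and $m\ge 2>1$. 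For the third estimate, rewrite $r^{1/m}\phi(r)=w(r)/r^{\beta-1/2}$ and apply the Morrey-type embedding $\dot H^\beta(\R)\hookrightarrow\dot C^{\,\beta-1/2}(\R)$, valid for $\beta\in(\tfrac{1}{2},\tfrac{3}{2})$; combined with $w(0)=0$, this yields $|w(r)|\le C\,r^{\beta-1/2}\|w\|_{\dot H^\beta(\R)}$, which is what we want.

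The principal technical point is the norm comparison $\|\phi\|_{\dot H^\beta(\R^3)}\approx\|w\|_{\dot H^\beta(\R)}$ at non-integer $\beta$ and the verification that the odd extension of $r\phi(r)$ genuinely lies in $\dot H^\beta(\R)$ for $\beta<\tfrac{3}{2}$; this is standard (odd reflection is bounded on fractional Sobolev spaces strictly below the critical index $3/2$), but is the one place requiring care. Once the equivalence is in hand, all three estimates become routine one-dimensional facts applied to $w$.
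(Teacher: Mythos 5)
Your argument reaches the right conclusions, but each of the three estimates is handled differently from the paper. For the norm equivalence $\|\phi\|_{\dot H^\beta(\R^3)}\approx\|w\|_{\dot H^\beta(\R)}$ (with $w(r)=r\phi(r)$ odd-extended), you check the integer cases $\beta=0,1,2$ and interpolate, whereas the paper reads the identity off directly from the radial Fourier transform formula $\hat\phi(\xi)=\frac{C}{|\xi|}\hat w(|\xi|)$, which gives an exact norm equality for every $\beta$ at once and entirely sidesteps the delicacy of interpolating homogeneous Sobolev spaces on $\R$ past the critical index $1/2$ and through the seminormed endpoint $\dot H^2(\R)$. For the second estimate, you stay in one dimension, writing $r\partial_r\phi=w'-w/r$ and controlling the two pieces by the shifted one-dimensional Sobolev embedding and the $L^m$ Hardy inequality on $(0,\infty)$; the paper instead stays in $\R^3$, proving that multiplication by $x/|x|$ is bounded on $\dot H^\gamma(\R^3)$ for $0\le\gamma\le1$ (by interpolation between $\gamma=0$ and the $L^2$ Hardy inequality at $\gamma=1$), concluding that $\partial_r\phi\in\dot H^{\beta-1}(\R^3)$, and then simply applying the first estimate to the radial function $\partial_r\phi$ at level $\beta-1$. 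For the third estimate, you invoke the Morrey embedding $\dot H^\beta(\R)\hookrightarrow\dot C^{\beta-1/2}(\R)$ together with $w(0)=0$, whereas the paper derives it directly from the second via the fundamental theorem of calculus and H\"older,
\begin{displaymath}
|\phi(r)|\le\int_r^\infty|\partial_s\phi(s)|\,ds\le\left(\int_r^\infty|s\partial_s\phi(s)|^m\,ds\right)^{1/m}r^{-1/m}.
\end{displaymath}
The paper's route is the more economical one, chaining all three estimates off the single Fourier transform identity and then off the first inequality itself; yours is more self-contained at the 1D level, trading that economy for a few extra classical inequalities applied directly to $w$. The one step in your sketch that deserves a more careful write-up is the interpolation through $\beta=2$: $\dot H^2(\R)$ is only a seminormed space defined modulo affine functions, so the two-sided bound on $\phi\mapsto w$ and its inverse needs to be interpolated on the odd subspace and for $\beta<3/2$ to make honest sense---that observation should appear explicitly rather than being folded into the phrase ``standard.''
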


\begin{proof}
Note that the Fourier transform of a radial function $\phi$, in $\R^3$, is given by the formula
$$
\hat\phi(r) = \frac Cr \int_0^\infty sin(rs) \phi(s)sds.
$$

Thus, if $\tilde\phi(s) = s\phi(s)$, extended oddly for $s<0$, $\tilde\phi \in \dot{H}^\beta(\R)$ and the first inequality follows from the one dimensional Sobolev embedding Theorem.\\
For the second inequality, note that if $\phi \in \dot{H}^\beta(\R^3)$, then $\partial_r\phi \in \dot{H}^{\beta-1}(\R^3)$. Indeed, $\partial_r\phi = \frac x{|x|}\nabla\phi$, $\nabla\phi \in \dot{H}^{\beta-1}(\R^3)$ and for $0\leq \gamma \leq 1$, $\phi \rightarrow \tilde{m} \phi$ is a bounded operator on $ \dot{H}^{\gamma}(\R^3)$, where $\tilde{m}$ is a homogenous of degree $0$ function smooth away from the origin (To see this last statement, note that it holds obviously for $\gamma=0$ and also for $\gamma = 1$, using the Hardy inequality $\n{\frac\phi{|x|}}_{L^2} \leq C\n{\phi}_{\dot{H}^1}$. The general case follows by interpolation). Thus, our second inequality follows from this fact and the first inequality.\\
To establish the third inequality, use the fundamental theorem of calculus and Holder's inequality, to obtain
$$
|\phi(r)| \leq  \int_r^\infty |\partial_s\phi(s)| ds    \leq  \left(   \int_r^\infty |s\partial_s\phi(s)|^m ds \right)^{\frac1m} r^{-\frac1m}
,$$
which is our third inequality

\end{proof}

We now start towards the proof of Theorem \ref{t3.2}, for $u$ satisfying \eref{cp}.Let us assume for example that we are in the defocusing case, the focusing case being identical.

\begin{lem}\label{l3.5}
Let $u$ be as in Theorem \ref{t3.2} and let $w(r,t)=ru(r,t)$. Then, $w$, as a function of $(r,t)$, (extended oddly for $r<0$) verifies
\begin{equation}\label{e3.6}
\partial_t^2 w-\partial_r^2 w=-r|u|^{p-1}u.
\end{equation}
Moreover, there exist functions $g_i(r)$, $i=1,2,3$, defined for $r>0$, with $g_i$ non-increasing, $\lim_{r\to\infty}g_i(r)=0$, which depend only on $K$, $A_0$,  so that, for $t\in\R$, $r>0$ we have
\begin{equation}\label{e3.7}
\begin{array}{c}
\displaystyle|u(r,t)|\leq \frac{g_1(r)}{r^a}\\
\displaystyle\left( \int_r^{4r} |\partial_t w(s,t)+\partial_r w(s,t)|^m ds \right)^{\frac1m} \leq g_2(r)\\
\displaystyle\left( \int_r^{4r} |\partial_t w(s,t)-\partial_r w(s,t)|^m ds \right)^{\frac1m} \leq g_3(r)
\end{array}
\end{equation}
where $a= \frac 2 {p-1}, m= \frac {p-1} 2$.
\end{lem}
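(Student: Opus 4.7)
The PDE \eref{e3.6} is routine: for radial $u$ in $\R^3$ one has $\Delta u=r^{-1}\partial_r^2(ru)$, so multiplying \eref{cp} by $r$ gives $\partial_t^2w-\partial_r^2w=-r|u|^{p-1}u$; since $u$ is even in $r$ both sides are odd, so the odd extension of $w$ automatically solves the same equation on all of $\R\times\R$. The interesting content is the decay estimates \eref{e3.7}, which I plan to deduce from Lemma \ref{l3.4} together with the compactness of $\overline K$.

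My starting point is that Lemma \ref{l3.4}, applied to the scaled pair $(v(\cdot,t),v_t(\cdot,t))\in K$ (with $\beta=s_p$ for $v$ and its radial derivative, $\beta=s_p-1$ for $v_t$), bounds $\|r^{1/m}v\|_{L^\infty}$ and the $L^m(\R^3)$ norms of $r^{1-2/m}\partial_rv$ and $r^{1-2/m}v_t$ uniformly in $t$. For radial functions in $\R^3$ the last two are equivalent, up to absolute constants, to $(\int_0^\infty s^m|\partial_sv|^m\,ds)^{1/m}$ and $(\int_0^\infty s^m|v_t|^m\,ds)^{1/m}$. Because the Lemma \ref{l3.4} maps are continuous linear and $\overline K$ is compact in $\hh$, their images are compact in $L^m(\R^3)$; the Fr\'echet--Kolmogorov characterization of $L^m$-compactness then produces non-increasing functions $\tilde g_2,\tilde g_3\colon(0,\infty)\to[0,\infty)$, depending only on $\overline K$, with $\tilde g_i(R)\to 0$ as $R\to\infty$, so that
\begin{equation*}
\int_R^\infty s^m|\partial_s v(s,t)|^m\,ds \leq \tilde g_2(R)^m,\qquad \int_R^\infty s^m|v_t(s,t)|^m\,ds \leq \tilde g_3(R)^m
\end{equation*}
for every $t\in\R$. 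The H\"older argument used to prove the third inequality of Lemma \ref{l3.4} actually gives the sharper pointwise bound $r^{1/m}|v(r,t)|\leq \bigl(\int_r^\infty s^m|\partial_sv|^m\,ds\bigr)^{1/m}\leq \tilde g_2(r)$.

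Next I transfer these to $u$ using $v(r,t)=\lambda(t)^{-a}u(r/\lambda(t),t)$ and $v_t(r,t)=\lambda(t)^{-a-1}\partial_tu(r/\lambda(t),t)$. Because $ma=1$, a direct change of variables shows that both $\int_0^\infty s^m|\partial_sv|^m\,ds$ and $\int_0^\infty s^m|v_t|^m\,ds$ are invariant under this rescaling, so setting $\rho_0=R/\lambda(t)$ and exploiting $\lambda(t)\geq A_0$ together with the monotonicity of $\tilde g_i$ gives, for all $\rho_0>0$,
\begin{equation*}
\int_{\rho_0}^\infty \rho^m|\partial_\rho u(\rho,t)|^m\,d\rho \leq \tilde g_2(A_0\rho_0)^m,\qquad \int_{\rho_0}^\infty \rho^m|\partial_tu(\rho,t)|^m\,d\rho \leq \tilde g_3(A_0\rho_0)^m,
\end{equation*}
together with $|u(\rho_0,t)|\leq \rho_0^{-a}\tilde g_2(A_0\rho_0)$. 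Setting $g_1(r):=\tilde g_2(A_0r)$ yields the first line of \eref{e3.7}.

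Finally, $\partial_tw=r\partial_tu$ and $\partial_rw=u+r\partial_ru$ give $\partial_tw\pm\partial_rw=\pm u+r(\partial_tu\pm\partial_ru)$, so by Minkowski
\begin{multline*}
\left(\int_r^{4r}|\partial_tw\pm\partial_rw|^m\,ds\right)^{1/m}\\ \leq \left(\int_r^{4r}|u|^m\,ds\right)^{1/m}+\left(\int_r^{4r}s^m|\partial_su|^m\,ds\right)^{1/m}+\left(\int_r^{4r}s^m|\partial_tu|^m\,ds\right)^{1/m}.
\end{multline*}
Using $am=1$, the first integral is at most $g_1(r)^m\int_r^{4r}s^{-1}\,ds=(\log 4)g_1(r)^m$; the other two are controlled by $\tilde g_2(A_0r)^m$ and $\tilde g_3(A_0r)^m$ from the previous step. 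Taking $g_2,g_3$ to be these sums finishes the plan. The delicate point, where I expect to spend the most care, is the uniform tail decay of $\tilde g_2,\tilde g_3$: this is precisely where the full compactness of $\overline K$ in $\hh$, rather than mere boundedness, enters.
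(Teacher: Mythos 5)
Your proof is correct, and it reaches the estimates \eref{e3.7} by a genuinely different route than the paper's. Both arguments rest on the same two ingredients --- the radial Sobolev-type estimates of Lemma \ref{l3.4}, applied to the rescaled pair in $K$, and the compactness of $\overline K$ to upgrade pointwise-in-$t$ decay to uniform decay --- but the mechanism of the second step differs. The paper truncates the data by the cutoff $\psi_M$, uses Lemma \ref{l2.35} (and Corollary \ref{c2.37}) to see that $\nhh{\psi_M v_0}{\psi_M v_1}\to 0$ uniformly over $K$, and only then applies Lemma \ref{l3.4} to the cut-off data $\psi_M v_0(\cdot,t)$. You instead apply the linear maps $\phi\mapsto r^{1-2/m}\partial_r\phi$ and $\phi\mapsto r^{1-2/m}\phi$ to the untruncated scaled data, observe that they push $\overline K$ forward to a compact subset of $L^m$, and extract uniform tail decay from that compactness. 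This is cleaner: it dispenses entirely with Lemma \ref{l2.35} (whose proof is a nontrivial Kato--Ponce-type computation) and with the perturbative machinery behind Corollary \ref{c2.37}. The trade-off is that the paper's localisation $\nhh{\psi_M v_0}{\psi_M v_1}\leq\epsilon$ is reused later (e.g.\ in Lemma \ref{l4.16}), so the authors get extra mileage out of it, whereas your $L^m$ tightness argument is purpose-built for Lemma \ref{l3.5}.

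Two small points worth smoothing in a final write-up. First, the fundamental-theorem-of-calculus bound in the proof of Lemma \ref{l3.4} carries a H\"older constant $C=C(m)$ in front of $r^{-1/m}\bigl(\int_r^\infty s^m|\partial_s\phi|^m\,ds\bigr)^{1/m}$, which you should keep when defining $g_1$. Second, for \eref{e3.6} the paper establishes the identity for regular solutions and passes to the limit using Remark \ref{r2.7}; you should say one word about this approximation rather than calling the identity routine outright, since in general the solution only has $\dot H^{s_p}$ regularity in $x$.
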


\begin{proof}
For regular solutions to \eref{cp}, \eref{e3.6} follows by differentiation. The general case follows by approximation, by Remark \ref{r2.7}.

To prove \eref{e3.7}, we start with the first estimate. We first note:
\begin{multline}\label{e3.8}
\text{For $u$ as in Lemma \ref{l3.5}, given $\epsilon>0$, there exists $r_0=r_0(\epsilon)>0$ }\\ 
\text{so that, for al $t\in\R$, $r\geq r_0$, we have } r^a|u(r,t)|\leq\epsilon.
\end{multline}
To establish \eref{e3.8}, define $$v_0(r,t)=\frac{1}{\lambda(t)^a}u\left(\frac{r}{\lambda(t)},t\right)\text{ and }
v_1(r,t)=\frac{1}{\lambda(t)^{a+1}}\partial_t u\left(\frac{r}{\lambda(t)},t\right).$$ 
We now apply \eref{e2.38}, with $\tau=0$, and $(v_0,v_1)=(v_0(r,t).v_1(r,t))$. Then
\begin{displaymath}
\nhh{\psi_Mv_0(r,t)}{\psi_Mv_1(r,t)}\leq\frac{\epsilon}{C},
\end{displaymath}
for $M$ large. Applying now Lemma \ref{l3.4},
we see that 
$|r^{a}\psi_M(r)v_0(r,t)|\leq\epsilon,$ or
\begin{displaymath}
\left|r^{a}\frac{1}{\lambda(t)^a}u\left(\frac{r}{\lambda(t)},t\right)\right|\leq\epsilon\text{ for }r\geq2M.
\end{displaymath}
But, 
if $\alpha=r/\lambda(t)$, we see that $\alpha^a |u(\alpha,t)|\leq\epsilon$ for $\alpha\lambda(t)\geq2M$. 
But if $\alpha\geq2M/A_0$, $\lambda(t)\alpha\geq2M$, establishing \eref{e3.8}.

Define now $$g_1(r,t)=\sup_{\alpha\geq r}|\alpha^au(\alpha,t)|,\quad g_1(r)=\sup_{t\in(-\infty,+\infty)}g_1(r,t).$$ Clearly, for $r_1\leq r_2$ we have 
$g_1(r_2)\leq g_1(r_1)$ and, from \eref{e3.8}, we have $$\lim_{r\to\infty}g_1(r)=0.$$

Arguing in the same way, to establish \eref{e3.7} it suffices to prove: Given $\epsilon>0$, there exists $r_0=r_0(\epsilon)>0$ so that for all $t\in\R$, $r\geq r_0$, we have 

\begin{equation}\label{e3.9}
\left( \int_r^{4r} |\partial_r w(s,t)|^m ds \right)^{\frac1m} \leq \epsilon,
\end{equation}
\begin{equation}\label{e3.10}
\left( \int_r^{4r} |\partial_t w(s,t)|^m ds \right)^{\frac1m} \leq \epsilon.
\end{equation}

To establish \eref{e3.9}, in light of \eref{e3.8} it suffices to give the corresponding estimate for $r\partial_ru(r,t)$. 
Apply now Lemma \ref{l3.4}, with 
$\phi=\partial_r f$, $\beta=s_p=\frac{3}{2}-\frac{2}{p-1}=\frac{3}{2}-\frac{1}{m}$, $f=\psi_M v_0(r,t)$. 
Then,
\begin{displaymath}
\n{r^{1- \frac2m}\partial_r\left(\psi_M v_0(r,t)\right)}_{L^m}\leq C\nh{\psi_Mv_0}.
\end{displaymath}
By \eref{e2.38}, the right hand side is smaller than $\epsilon/2$, for $M$ large. Since $\partial_r(\psi_Mv_0)=\psi_M\partial_r(v_0) + \partial_r(\psi_M)v_0$, and 
$\partial_r\psi_M=\frac{1}{M}\psi'\left(\frac{r}{M}\right)$, with supp $\psi' \subset (1,2)$,the third inequality in  Lemma \ref{l3.4} now gives 
\begin{displaymath}
\n{r^{1- \frac2m}\partial_r\left(\psi_M v_0(r,t)\right)}_{L^m}\leq C\nh{\tilde\psi_Mv_0},
\end{displaymath}
where $\tilde\psi = 1$ for $r>1$, $\tilde\psi = 0$ for $0<r<\frac12$. Hence taking $M$ even larger, we have 
\begin{displaymath}
\n{r^{1- \frac2m}\psi_M\partial_r v_0(r,t)}_{L^m}\leq \epsilon.
\end{displaymath}
Hence, 
$\left( \int_{2M}^{\infty} |(\frac r{\lambda(t)})\frac1{\lambda(t)^a}\partial_ru(\frac r{\lambda(t)},t)|^m dr \right)^{\frac1m} \leq \epsilon$, 
so that 
$$\left(  \int_{\frac{2M}{\lambda(t)}}^{\infty} |\alpha\partial_\alpha u(\alpha,t)|^m d\alpha \right)^{\frac1m} \leq \epsilon.$$ But, $\{\alpha> \frac{2M}{A_0}\} \subset \{\alpha> \frac{2M}{\lambda(t)}\}$
and \eref{e3.9} follows.

For \eref{e3.10}, we argue similarly, using the first inequality in Lemma \ref{l3.4} with 
$\beta=s_p-1$, $m=\frac{p-1}2$, $q=2$, $\phi=\psi_Mv_1(r,t)$, $\partial_t w(r,t)=rv_1(r,t)$.

\end{proof}

\begin{lem}\label{l3.11}
Let $u$ , $w$, $g_i$ be as in Lemma \ref{l3.5}. Then, there exists a constant $C_p>0$ so that
\begin{equation}\label{e3.12}
g_2(r)\leq C_p g_1^p(r)
\end{equation}
\begin{equation}\label{e3.13}
g_3(r)\leq C_p g_1^p(r)
\end{equation}
\end{lem}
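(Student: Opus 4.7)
Introduce the null derivatives $z_+ = \partial_t w + \partial_r w$ and $z_- = \partial_t w - \partial_r w$. From \eref{e3.6}, $(\partial_t^2-\partial_r^2)w = F$ with $F(r,t) = -r|u|^{p-1}u$, and factoring the d'Alembertian gives $(\partial_t-\partial_r)z_+ = F$ and $(\partial_t+\partial_r)z_- = F$. The idea is that $z_+$ is transported by the null vector field $\partial_t-\partial_r$, so it can be recovered by integration along the characteristic $r+t = \text{const}$, with the only contribution coming from $F$ and a boundary term at spatial infinity.

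\textbf{Integration along characteristics.} Fix $(r_0,t_0)$ with $r_0 \in [r,4r]$. Along the curve $s \mapsto (r_0+s,\,t_0-s)$ the operator $(\partial_t-\partial_r)$ equals $-\tfrac{d}{ds}$, so for any $T>0$,
\begin{equation*}
z_+(r_0,t_0) \;=\; z_+(r_0+T,\,t_0-T) \;-\; \int_0^T (r_0+s)\,|u|^{p-1}u(r_0+s,\,t_0-s)\,ds.
\end{equation*}
Using $|u(\rho,\tau)|\le g_1(\rho)/\rho^{a}$ from \eref{e3.7} together with the monotonicity $g_1(\rho)\le g_1(r_0)$ for $\rho\ge r_0$, the integral is bounded by $g_1(r_0)^p\int_0^T (r_0+s)^{1-pa}\,ds$. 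Since $pa = \tfrac{2p}{p-1} = 2+a > 2$, this integral converges and is at most $\tfrac{1}{pa-2}\,r_0^{2-pa}$.

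\textbf{The critical algebraic identity and the boundary term.} The key observation is the scaling identity
\begin{equation*}
m(2-pa) \;=\; \tfrac{p-1}{2}\cdot(-a) \;=\; -ma \;=\; -1,
\end{equation*}
so $\int_r^{4r} r_0^{m(2-pa)}\,dr_0 = \ln 4$. Taking the $L^m_{r_0}([r,4r])$ norm of the integral term yields
\begin{equation*}
\Bigl(\int_r^{4r} \Bigl|\int_0^T (r_0+s)\,|u|^{p-1}u(r_0+s,\,t_0-s)\,ds\Bigr|^m dr_0\Bigr)^{1/m} \;\le\; \tfrac{(\ln 4)^{1/m}}{pa-2}\,g_1(r)^p.
\end{equation*}
For the boundary term, the translation $\rho = r_0 + T$ and the inclusion $[r+T,4r+T]\subset[r+T,\,4(r+T)]$ give
\begin{equation*}
\int_r^{4r} |z_+(r_0+T,\,t_0-T)|^m\,dr_0 \;=\; \int_{r+T}^{4r+T}|z_+(\rho,\,t_0-T)|^m\,d\rho \;\le\; g_2(r+T)^m,
\end{equation*}
which tends to $0$ as $T\to\infty$ by Lemma \ref{l3.5}.

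\textbf{Conclusion.} Combining by Minkowski's inequality and letting $T\to\infty$,
\begin{equation*}
\Bigl(\int_r^{4r}|z_+(r_0,t_0)|^m\,dr_0\Bigr)^{1/m} \;\le\; \tfrac{(\ln 4)^{1/m}}{pa-2}\,g_1(r)^p,
\end{equation*}
and taking the supremum over $t_0$ yields \eref{e3.12}. The estimate \eref{e3.13} for $z_-$ is identical, integrating instead along $s \mapsto (r_0+s,\,t_0+s)$ on the null line $r-t=\text{const}$, so that $(\partial_t+\partial_r)z_- = F$ becomes $\tfrac{d}{ds}z_-$ and the same boundary-vanishing argument applies with $g_3$ replacing $g_2$. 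The only delicate point is the exponent computation $m(2-pa) = -1$; it is precisely the $\dot H^{s_p}$-critical scaling that forces the marginal logarithmic gain, without which the characteristic integral would diverge or fail to close.
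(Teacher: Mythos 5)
Your proof is correct and takes essentially the same approach as the paper: both integrate the transported quantities $z_\pm = \partial_t w \pm \partial_r w$ along null characteristics, bound the nonlinear source by $g_1^p$ using the pointwise decay and the exponent identity $m(2-pa)=-1$, and send the boundary term to zero along the characteristic using the vanishing of $g_2, g_3$ at infinity. The only difference is cosmetic: you use a free parameter $T\to\infty$ while the paper uses $T = Mr$ with $M\to\infty$.
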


\begin{proof}
Let $$z_1(r,t)=\partila_rw(r,t)+\partial_tw(r,t),$$ $$z_2(r,t)=\partila_rw(r,t)-\partial_tw(r,t).$$
Using \eref{e3.6}, we see that
$$\partial_\tau z_1(r_0+\tau,t_0-\tau)=(\partial_{rr}w-\partial_{tt}w)(r_0+\tau,t_0-\tau)=(r_0+\tau)|u|^{p-1}u(r_0+\tau,t_0-\tau),$$ and
$$\partila_\tau z_2(r_0+\tau,t_0+\tau)=(r_0+\tau)|u|^{p-1}u(r_0+\tau,t_0+\tau).$$

Then, $$z_1(r+s,t_0) = z_1(r +s + Mr,t_0-Mr) - \int_0^{Mr}(r+s+\tau)|u|^{p-1}u(r+s+\tau,t_0-\tau)\;d\tau.$$
Fix $r_0>0$.  Choose $r>r_0$ and  $t_0\in\R$, so that 
$$
 g_2(r_0) = \left( \int_0^{3r} |z_1(r+s,t_0)|^m ds \right)^{\frac1m} 
.$$
Then,

\begin{multline*}
g_2(r_0)\leq     
\left( \int_0^{3r} |z_1(r+s+Mr,t_0-Mr)|^m ds \right)^{\frac1m}\\
+\left( \int_0^{3r} \left(\int_0^{Mr}(r+s+\tau)|u|^p(r+s+\tau,t_0-\tau)d\tau\right)^m ds \right)^{\frac1m} \\
\leq  g_2((M+1)r_0)
+g_1^p(r_0)\left( \int_0^{3r} \left(\int_0^{Mr}(r+s+\tau)^{1-ap}d\tau\right)^m ds \right)^{\frac1m}\\
\leq g_2((M+1)r_0)
+Cg_1^p(r_0)\left( \int_0^{3r} (r+s)^{m(2-ap)} ds \right)^{\frac1m}\\
\leq g_2((M+1)r_0)+Cg_1^p(r)
\end{multline*}
since $2-ap=2-\frac{2p}{p-1}=-\frac{2}{p-1}=-\frac1m<0$ or equivalently $m(2-ap)=-1$.

Since $C$ is independent of $M$, letting $M \rightarrow
 \infty$, we obtain \eref{e3.12}. The argument for \eref{e3.13} is similar.
\end{proof}

\begin{cor}\label{c3.14}
Let $u$, $w$, $g_1$ be as in Lemma \ref{l3.5}. Then
\begin{equation}\label{e3.15}
\left\{
\begin{array}{c}
\left( \int_r^{4r} |\partial_r w(s,t)|^m ds \right)^{\frac1m}   \leq C_p g_1^p(r)\\ \\
\left( \int_r^{4r} |\partial_t w(s,t)|^m ds \right)^{\frac1m}    \leq C_p g_1^p(r)
\end{array}
\right.
\end{equation}
\end{cor}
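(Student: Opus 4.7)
The corollary is essentially an immediate consequence of Lemma \ref{l3.11} via the triangle inequality. Here is the plan.

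First I would recover $\partial_r w$ and $\partial_t w$ from the characteristic derivatives $z_1 = \partial_r w + \partial_t w$ and $z_2 = \partial_r w - \partial_t w$ appearing in the proof of Lemma \ref{l3.11}, via the identities
\begin{displaymath}
\partial_r w = \tfrac{1}{2}(z_1 + z_2), \qquad \partial_t w = \tfrac{1}{2}(z_1 - z_2).
\end{displaymath}

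Next I would apply Minkowski's inequality in $L^m([r,4r])$ to each of these to get
\begin{displaymath}
\left(\int_r^{4r}|\partial_r w(s,t)|^m\,ds\right)^{1/m} \leq \tfrac{1}{2}\left[\left(\int_r^{4r}|z_1(s,t)|^m\,ds\right)^{1/m} + \left(\int_r^{4r}|z_2(s,t)|^m\,ds\right)^{1/m}\right],
\end{displaymath}
and the analogous bound with $\partial_t w$ on the left. By the definitions of $g_2$ and $g_3$ in Lemma \ref{l3.5}, the right-hand side is bounded by $\tfrac{1}{2}(g_2(r) + g_3(r))$.

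Finally I would invoke estimates \eref{e3.12} and \eref{e3.13} from Lemma \ref{l3.11}, which yield $g_2(r) + g_3(r) \leq 2 C_p g_1^p(r)$, and relabel the constant. There is no real obstacle here; the content of the corollary is just that combining the $+$- and $-$-characteristic estimates, already established via the convexity/integration-along-characteristics argument in Lemma \ref{l3.11}, gives decay for $\partial_r w$ and $\partial_t w$ separately. The only point to double-check is that the interval $[r,4r]$ in \eref{e3.15} matches the interval on which $g_2$ and $g_3$ were defined in \eref{e3.7}, which it does.
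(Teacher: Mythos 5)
Your proposal is correct and matches the paper's (one-line) proof, which simply asserts that the corollary is an immediate consequence of \eref{e3.12} and \eref{e3.13}; your expansion via $\partial_r w = \tfrac12(z_1+z_2)$, $\partial_t w = \tfrac12(z_1-z_2)$, Minkowski's inequality on $L^m([r,4r])$, and the bounds $g_2(r), g_3(r) \leq C_p g_1^p(r)$ is exactly the intended argument.
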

This is an immediate consequence of \eref{e3.12}, \eref{e3.13}.


\begin{lem}\label{l3.16}
Let $u$, $w$, $g_1$ be as in Lemma \ref{l3.5}. Then, there exists $\beta>0$, $r_0$ large, so that, for $r>r_0$ we have
\begin{equation}\label{e3.17}
g_1(r)\leq \frac{C_0}{r^\beta}.
\end{equation}
\end{lem}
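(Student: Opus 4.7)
The plan is to establish a contracting dyadic recursion
\[
g_1(r_0) \leq h(a)\, g_1(r_0/2) + C\, g_1(r_0/2)^p, \qquad h(a) < 1,
\]
using d'Alembert's representation for the one-dimensional wave equation $\partial_t^2 w - \partial_r^2 w = -r|u|^{p-1}u$ satisfied by $w=ru$. Since $g_1 \to 0$ by \eref{e3.8}, for $r_0$ past some threshold the superlinear remainder is absorbed, and iterating dyadically yields polynomial decay.

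Concretely, fix $r_0$ large and $t_0 \in \R$, let $T = r_0/2$, and write d'Alembert's formula for $w$ with data at $t_0$:
\[
w(r_0, t_0+T) = \tfrac{1}{2}\bigl[w(r_0+T, t_0) + w(r_0-T, t_0)\bigr] + \tfrac{1}{2}\int_{r_0-T}^{r_0+T}\partial_s w(s, t_0)\, ds + \tfrac{1}{2}\iint_\Delta F(y, s)\, dy\, ds,
\]
where $F = -y|u|^{p-1}u$ and $\Delta$ is the backward light cone from $(r_0, t_0+T)$ down to $\{s=t_0\}$. The choice $T=r_0/2$ keeps $\Delta \subset \{y\geq r_0/2\}$, so the origin is never reached and the odd extension is irrelevant. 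For the boundary term, $|w(\alpha, t_0)| \leq \alpha^{1-a} g_1(\alpha) \leq \alpha^{1-a} g_1(r_0/2)$ (by monotonicity of $g_1$) yields the bound
\[
r_0^{1-a}\, h(a)\, g_1(r_0/2), \qquad h(a):=\tfrac{1}{2}\bigl[(3/2)^{1-a} + (1/2)^{1-a}\bigr].
\]
The decisive observation is that $x\mapsto x^{1-a}$ is strictly concave on $(0,\infty)$ for $a\in(0,1)$, so Jensen's inequality forces $h(a)<1$ strictly. The velocity integral is controlled by H\"older (interval length $\sim r_0$) and the $L^m$ estimate \eref{e3.15} by $C r_0^{1-a} g_3(r_0/2) \leq C r_0^{1-a} g_1(r_0/2)^p$. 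For the source, $|F(y,s)| \leq g_1(r_0/2)^p\, y^{1-ap}$ on $\Delta$, and the algebraic identity $3-ap = 1-a$ (from $a(p-1)=2$) converts the double integral into $\leq C\, g_1(r_0/2)^p\, r_0^{1-a}$. Hence every term is at most $r_0^{1-a}$ times $h(a) g_1(r_0/2) + C g_1(r_0/2)^p$.

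Dividing by $r_0^{1-a}$ bounds $r_0^a|u(r_0, t_0+T)|$. Running the same argument for any $\alpha\geq r_0$ (taking $T=\alpha/2$ and adjusting $t_0$ so that $t_0+T$ equals an arbitrary target time), then taking the sup defining $g_1$, delivers the recursion
\[
g_1(r_0) \leq h(a)\, g_1(r_0/2) + C\, g_1(r_0/2)^p.
\]
Choosing $r_*$ large enough that $C g_1(r_*/2)^{p-1} \leq (1-h(a))/2$, the recursion becomes $g_1(r_0) \leq \tfrac{1+h(a)}{2}\, g_1(r_0/2)$ for all $r_0 \geq r_*$, and iterating dyadically yields $g_1(r) \leq C r^{-\beta}$ with $\beta = \log_2\!\bigl(2/(1+h(a))\bigr) > 0$, proving \eref{e3.17}. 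The main technical point I anticipate is justifying the pointwise d'Alembert formula at the regularity $u\in C(\R;\dot H^{s_p})$; the natural fix is to approximate by smooth-data solutions via Remark \ref{r2.7} and pass to the limit in the integral representation and in all the estimates above.
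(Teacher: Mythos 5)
Your argument is essentially identical to the paper's: the one-dimensional d'Alembert representation for $w=ru$ on a forward/backward light cone of radius $r_0/2$, the bound $h(a)=\tfrac12[(3/2)^{1-a}+(1/2)^{1-a}]<1$ (which the paper calls ``an elementary calculus argument'' and you correctly justify by strict concavity of $x\mapsto x^{1-a}$, i.e.\ Jensen), the velocity and source terms absorbed by $Cg_1^p(r_0/2)$ via \eref{e3.15} and the identity $3-ap=1-a$, and the resulting contracting recursion once $g_1$ is small. This matches the paper's proof in structure and all essentials; the only blemishes are cosmetic (the velocity term should read $\partial_t w(s,t_0)$, not $\partial_s w$, and the reference to $g_3$ is superfluous since \eref{e3.15} bounds $\partial_t w$ directly).
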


\begin{proof}
We again use equation \eref{e3.6}. Using the standard representation formula for solutions of the wave equation, 
in one space dimension (see \cite{SS}), we obtain:
\begin{multline}\label{e3.18}
r_0u(r_0,t_0)=\frac{1}{2}\left[\left(r_0+\frac{r_0}{2}\right)u\left(r_0+\frac{r_0}{2},t_0-\frac{r_0}{2}\right)\right.\\+\left.
\left(r_0-\frac{r_0}{2}\right)u\left(r_0-\frac{r_0}{2},t_0-\frac{r_0}{2}\right)\right]
+\frac{1}{2}\int_{r_0-\frac{r_0}{2}}^{r_0+\frac{r_0}{2}}\alpha\partila_tu\left(\alpha,t_0-\frac{r_0}{2}\right)\;d\alpha\\
+\frac{1}{2}\int_{0}^{\frac{r_0}{2}}\int_{r_0-\left(\frac{r_0}{2}-\tau\right)}^{r_0+\left(\frac{r_0}{2}+\tau\right)}
\alpha|u|^{p-1}u\left(\alpha,t_0-\frac{r_0}{2}+\tau\right)\;d\tau d\alpha.
\end{multline}

Note that the first integral term is bounded by
$$
C\left( \int_{\frac{r_0}2}^{\frac{3r_0}2} |\alpha\partial_t u(\alpha,t_0-\frac{r_0}2)|^m d\alpha \right)^{\frac1m} r_0^{\frac{m-1}{m}} \leq C g_1^p(\frac{r_0}2) r_0^{\frac{m-1}{m}}
$$
by \eref{e3.15}, where we have used Holder's inequality. Notice also that $a+\frac{(m-1)}m=1$, so that, using \eref{e3.18}, we obtain 
\begin{multline*}
r_0|u(r_0,t_0)|\leq\frac{1}{2}\left[\frac{3}{2}r_0\left|u\left(\frac{3}{2}r_0,t_0-\frac{r_0}{2}\right)\right|
+\frac{1}{2}r_0\left|u\left(\frac{1}{2}r_0,t_0-\frac{r_0}{2}\right)\right|\right]\\
+C_pr_0\frac{g_1^p(r_0/2)}{r_0^a}+C_p\frac{r_0^3}{r_0^{ap}}g_1^p(\frac{r_0}{2}),
\end{multline*}
and
\begin{multline*}
r_0|u(r_0,t_0)|\leq\frac{1}{2}\left[\frac{3}{2}r_0\left|u\left(\frac{3}{2}r_0,t_0-\frac{r_0}{2}\right)\right|
+\frac{1}{2}r_0\left|u\left(\frac{1}{2}r_0,t_0-\frac{r_0}{2}\right)\right|\right]\\
+C_p\frac{r_0}{r_0^a}g_1^p(\frac{r_0}{2}).
\end{multline*}

Clearing, we see that
\begin{displaymath}
r_0^a|u(r_0,t_0)|\leq\frac{1}{2}\left[\left(\frac{3}{2}\right)^{1-a}+\left(\frac{1}{2}\right)^{1-a}\right]g_1\left(\frac{r_0}{2}\right)+
C_pg_1(r_0/2)^p,
\end{displaymath}
and thus, as before
\begin{equation}\label{e3.19}
g_1(r_0)\leq\frac{1}{2}\left[\left(\frac{3}{2}\right)^{1-a}+\left(\frac{1}{2}\right)^{1-a}\right]g_1\left(\frac{r_0}{2}\right)+
C_pg_1\left(\frac{r_0}{2}\right)^p
\end{equation}
Note now that an elementary calculus argument shows that
\begin{equation}\label{e3.20}
\frac{1}{2}\left[\left(\frac{3}{2}\right)^{1-a}+\left(\frac{1}{2}\right)^{1-a}\right]=(1-2\theta_p)\quad 0<\theta_p<1.
\end{equation}
Also note that, since $g_1(r_0)\xrightarrow[r_0\to\infty]{}0$, for $r_0$ large we have $C_pg_1\left(\frac{r_0}{2}\right)^{p-1}\leq\theta_p$
and hence
\begin{equation}\label{e3.21}
g_1(r_0)\leq(1-\theta_p)g_1\left(\frac{r_0}{2}\right),\quad r_0\text{ large}.
\end{equation}
A simple iteration now shows that \eref{e3.21} gives \eref{e3.17} for $r \geq r_0$. Lemma \ref{l3.4} gives the estimate for $r\geq1$ .
\end{proof}

We are now ready to conclude the proof of Theorem \ref{t3.2}. This proceeds by an iteration. Let $u$, $w$, $g_i$, $a$, $\beta$ be as in Lemma \ref{l3.5},
Lemma \ref{l3.16}. 

By choosing a possibly smaller $\beta$, we can insure that $a+\beta <1$. Recall that
$$
|u(r_0,t)|\leq\frac{C_0}{{r_0}^{a+\beta}}, 
$$
by 
\eref{e3.17}.  For $0<\gamma<1$ to be chosen, we have
$w(r_0,t) = w(r_0^\gamma,t) - \int_{r_0^\gamma}^{r_0}\partial_rw(s,t) ds$, so that 
$$
|w(r_0,t)| \leq C\frac{r_0^\gamma}{r_0^{\gamma(a+\beta)}} + \left( \int_{r_0^\gamma}^{r_0} |\partial_rw|^m \right)^{\frac1m} r_0^{\frac{m-1}{m}}.
$$
From \eref{e3.15}, we have that, $\left( \int_{r}^{4r} |\partial_rw|^m \right)^{\frac1m}\leq \frac C{r^{\beta p}}$. But then, we have
\begin{displaymath}
\left( \int_{r}^{\infty} |\partial_rw|^m \right)^{\frac1m} = \left( \Sigma_{k \geq0}\int_{2^kr}^{2^{k+1}r} |\partial_rw|^m \right)^{\frac1m} \leq \left( \Sigma_{k \geq0}\int_{2^kr}^{2^{k+2}r} |\partial_rw|^m \right)^{\frac1m}
\end{displaymath}
and so 
\begin{displaymath}
\left( \int_{r}^{\infty} |\partial_rw|^m \right)^{\frac1m} \leq C \left( \Sigma_{k \geq0} 2^{-kmp\beta} \right)^{\frac1m} \frac1{r^{\beta p}} \leq \frac C{r^{\beta p}}.
\end{displaymath}
Thus,
$$
|w(r_0,t)| \leq C\frac{r_0^\gamma}{r_0^{\gamma(a+\beta)}} + \frac  {r_0^{\frac{m-1}{m}}}{r_0^{\gamma \beta p}},
$$
so that
$$
|u(r_0,t)| \leq \frac{C}{r_0^{(1-\gamma)}r_0^{\gamma(a+\beta)}} + \frac{C}{r_0^{a}r_0^{\gamma \beta p}}.
$$
Choose now $\gamma$ so that 
$$
\gamma (a+\beta) + (1-\gamma) = a + \gamma \beta p.
$$
Then, $\gamma = \frac{1-a}{1-a +\beta(p-1)}$
and $0<\gamma<1$, $\gamma p > 1$, so that, if 
$$
\beta' = \gamma (a+\beta) + (1-\gamma) - a =  \gamma \beta p
$$
then $\beta' > \beta$, $a + \beta' < 1$ and $|u(r_0,t)| \leq \frac{C}{r_0^{a + \beta'}}$. Thus, let 
$$
\beta_0=\beta, \    \   \  \beta_{n+1} =  \gamma_n (a+\beta_n) + (1-\gamma_n) - a =  \gamma_n \beta_n p
\   \mbox{where} \  \gamma_n = \frac{1-a}{1-a +\beta_n(p-1)}.
$$
Iterating, we see that for each $n$ we have $|u(r_0,t)| \leq \frac{C_n}{r_0^{a + \beta_n}}$. Note that that $(\beta_n)$ is increasing and bounded, thus $\beta_n \rightarrow \bar\beta = 1-a$ since $\frac{1-a}{1-a +\bar\beta(p-1)} = \frac1p$. Thus, we have shown that, for each $\epsilon>0$, we have, 
$$|u(r_0,t)| \leq \frac{C_\epsilon}{r_0^{1-\epsilon}}.$$
Consider now $\int_r^{4r} |\partial_r w(s,t)| ds$ which is bounded by
$$
\left( \int_r^{4r} |\partial_r w(s,t)|^m ds \right)^{\frac1m} r^{\frac{m-1}m}   \leq C g_1^p(r) r^{\frac{m-1}m} \leq C\frac  {r^{1-a}}{r^{p(1-a-\epsilon)}} \leq \frac  {C}{r^{(p-1)(1-a) -\epsilon p}}.
$$
But $(p-1)(1-a) >0$, so if $\epsilon$ is so small that $(p-1)(1-a) - \epsilon p>0$, we see that $\int_{1}^\infty |\partial_rw|< \infty$, so $|w(r_0,t)|\leq C_p$ hence $$|u(r_0,t)|\leq \frac C{r_0}.$$
But then, $\left( \int_r^{4r} |\partial_t w(s,t)|^m ds \right)^{\frac1m} \leq \frac C{r^{p(1-a)}}$, and since $r\partial_tu = \partial_t w$, using a geometric series again, we see that 
$$
\left( \int_r^{\infty} |\partial_t u(s,t)|^m ds \right)^{\frac1m}\leq \frac C{r^{p(1-a)}}.
$$
Finally, $r\partial_ru = \partial_r w - u$, so that 
$$
\left( \int_r^{4r} |\partial_t w(s,t)|^m ds \right)^{\frac1m}\leq \frac C{r^{p(1-a)}} +  \frac C{r^{1-a}}\leq   \frac C{r^{1-a}}
$$
from which we obtain  Theorem \ref{t3.2}.

As a corollary, we have

\begin{cor}\label{c3.7}
We have the following inequalities
\begin{displaymath}
\left( \int_r^{\infty} |s\partial_r u(s,t)|^2 ds \right)^{\frac12}\leq \frac C{r^{\frac12}},
\end{displaymath}
\begin{displaymath}
\left( \int_r^{\infty} |s\partial_t u(s,t)|^2 ds \right)^{\frac12}\leq \frac C{r^{\frac12 +(p-3)}}.
\end{displaymath}

\end{cor}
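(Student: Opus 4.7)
The plan is to derive Corollary~\ref{c3.7} from Theorem~\ref{t3.2} by dyadic decomposition of $[r,\infty)$ followed by H\"older's inequality. The two inequalities in the corollary are $L^2$-in-$s$ versions of the $L^m$ estimates of Theorem~\ref{t3.2} with $m=(p-1)/2$; since $p\geq 5$ implies $m\geq 2$, H\"older is the right tool to pass from $L^m$ to $L^2$ on intervals, at the cost of a factor of $|I|^{1/2-1/m}$. The only thing to verify is that the exponents that appear in the resulting dyadic bounds still produce a summable geometric series.

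Concretely, set $A_k=[2^k r,2^{k+1}r]$ for $k\geq 0$, so that $[r,\infty)=\bigcup_{k\geq0}A_k$ and $|A_k|\sim 2^k r$. Applying Theorem~\ref{t3.2} with $|x|$ replaced by $2^k r$ gives
\begin{equation*}
\|s\partial_r u(s,t)\|_{L^m(A_k)}\leq \frac{C}{(2^k r)^{1-a}},\qquad \|s\partial_t u(s,t)\|_{L^m(A_k)}\leq \frac{C}{(2^k r)^{p(1-a)}}.
\end{equation*}
Since $m\geq 2$, H\"older on $A_k$ yields $\|f\|_{L^2(A_k)}\leq |A_k|^{1/2-1/m}\|f\|_{L^m(A_k)}\leq C(2^k r)^{1/2-1/m}\|f\|_{L^m(A_k)}$. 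Using the relation $a=1/m=2/(p-1)$, one checks
\begin{equation*}
\tfrac12-\tfrac1m-(1-a)=-\tfrac12, \qquad \tfrac12-\tfrac1m-p(1-a)=-\left(\tfrac12+(p-3)\right),
\end{equation*}
the second equality following from the factorization $2p^2-7p+5=(2p-5)(p-1)$. Hence $\|s\partial_r u\|_{L^2(A_k)}\leq C(2^k r)^{-1/2}$ and $\|s\partial_t u\|_{L^2(A_k)}\leq C(2^k r)^{-(1/2+(p-3))}$.

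Squaring and summing over $k\geq 0$ gives two geometric series with ratios $1/2$ and $2^{-(2p-5)}$ respectively, both strictly less than $1$ since $p\geq 5>5/2$. This produces exactly the bounds $C/r^{1/2}$ and $C/r^{1/2+(p-3)}$ claimed in Corollary~\ref{c3.7}. The only potential obstacle is keeping the exponents straight; all summability is automatic from $p\geq 5$. Once Theorem~\ref{t3.2} is available, the corollary is thus a straightforward dyadic bookkeeping.
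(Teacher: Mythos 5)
Your proposal is correct and matches the paper's own argument exactly: the paper likewise obtains the $L^2$ bounds on dyadic shells $(2^k,2^{k+1})$ from the $L^m$ estimates of Theorem \ref{t3.2} via H\"older's inequality (using $m=(p-1)/2\geq 2$) and then sums the resulting geometric series. Your exponent bookkeeping, including the identity $\tfrac12-\tfrac1m-p(1-a)=-\bigl(\tfrac12+(p-3)\bigr)$ and the summability condition $p>5/2$, is accurate.
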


\begin{proof}
The corresponding inequalities where the integration is restricted to $(2^k,2^{k+1})$ are a direct consequence of \eref{e3.3} via Holder's inequality. The estimates then follow by summing a geometric series.
\end{proof}


\section{Application, concentration-compactness procedure}

In this section we will state our main application of the decay estimates in Theorem \ref{t3.2} and begin the proof following the concentration-compactness procedure developed by the authors in \cite{KM1}, \cite{KM2}, \cite{KM3}. We now assume in the next tree section, that we are in the defocusing case ($\mu=1$), that is, $u$ is a solution of 
\begin{equation}\label{w+}
\left\{\begin{array}{l} (\partial_t^2-\triangle)u + |u|^{p-1}u=0\quad\quad(x,t)\in\R^3\times\R\\ \\
u\big|_{t=0}=u_0\\ \\
\partial_t u\big|_{t=0}=u_1
\end{array}\right.
\end{equation}
\begin{thm}\label{t4.1}
Suppose that $u$ is a solution to \eref{w+} with radial data $(u_0,u_1)\in\hh$, $p>5$ and maximal interval of existence $I=(-\tmu,$ $\tpu)$. Assume that
\begin{displaymath}
\sup_{0<t<\tpu}\nhh{u(t)}{\partial_tu(t)}=A<+\infty.
\end{displaymath}
Then $\tpu=+\infty$ and $u$ scatters at $t=+\infty$, i.e. $\exists (u_0^+,u_1^+)\in\dot H^{s_p}\times$ $\dot H^{s_p-1}$, so that
\begin{displaymath}
\lim_{t\to+\infty}\nhhx{(u(t),\partial_tu(t))-S(t)((u_0^+,u_1^+))}=0.
\end{displaymath}
\end{thm}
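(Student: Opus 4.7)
The plan is to follow the concentration-compactness/rigidity theorem scheme introduced in \cite{KM1,KM2,KM3}. Define the critical size
\begin{displaymath}
A_c=\sup\{A>0 : \text{every radial solution of \eref{w+} with } \sup_{0<t<T_+}\nhh{u(t)}{\partial_tu(t)}\le A \text{ satisfies } T_+=+\infty \text{ and scatters}\}.
\end{displaymath}
Remark \ref{r2.8} (small data theory) gives $A_c\ge\widetilde\delta_p>0$. Assuming the theorem fails, $A_c<+\infty$, and I will derive a contradiction by extracting and then eliminating a ``critical element''.

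Next, I would construct the critical element. Take radial solutions $u_n$ with $\sup_{0<t<T_{+,n}}\nhh{u_n(t)}{\partial_tu_n(t)}\to A_c$ such that each $u_n$ fails to scatter on $[0,T_{+,n})$. Choose times $t_n\in[0,T_{+,n})$ on which $\nsi{u_n}$ is not bounded, and apply a radial Bahouri--G\'erard type linear profile decomposition to $(u_n(t_n),\partial_t u_n(t_n))$ in $\hh$. Combining orthogonality of profiles with the Perturbation Theorem \ref{t2.16}, the definition of $A_c$, and the non-scattering of $u_n$, one shows that only one profile survives: this produces a radial solution $u_c$, with initial data of norm $A_c$, which does not scatter forward, and whose rescaled trajectory
\begin{displaymath}
K=\left\{\left(\tfrac1{\lambda(t)^{2/(p-1)}}u_c\!\left(\tfrac{x}{\lambda(t)},t\right),\tfrac1{\lambda(t)^{2/(p-1)+1}}\partial_tu_c\!\left(\tfrac{x}{\lambda(t)},t\right)\right):0\le t<T_+\right\}
\end{displaymath}
has $\overline K$ precompact in $\hh$ for some $\lambda(t)>0$. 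Standard arguments (see \cite{KM1,KM2}) show that $\lambda(t)$ may be taken $\ge A_0>0$ when $T_+=+\infty$. Two scenarios remain: either $T_+<+\infty$ (self-similar type blow-up) or $T_+=+\infty$ with the full compactness property \eref{e3.1}.

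The third step is rigidity: rule out both scenarios. The case $T_+<+\infty$ is handled as in \cite{KM1,KM2,KM3}: the compactness forces $\lambda(t)\to+\infty$ as $t\uparrow T_+$, and finite speed of propagation, combined with the conservation of the support of energy on the characteristic cone, together with the uniform bound in $\hh$, forces $u_c\equiv0$, contradicting $\nhh{u_{c,0}}{u_{c,1}}=A_c>0$. In the case $T_+=+\infty$ with the compactness property, Theorem \ref{t3.2} and Corollary \ref{c3.7} apply and give the critical ``scaling-breaking'' decay \eref{e3.3}. Following the strategy of \cite{KM3}, I would then plug this decay into a virial/Morawetz-type identity adapted to the supercritical scaling: the defocusing sign of $+|u|^{p-1}u$ yields a coercive spacetime bulk term, while all boundary and error terms are integrable thanks to the tails $|u|\lesssim|x|^{-1}$ and the $L^m$ decay of $r\partial_r u$, $r\partial_t u$ from Theorem \ref{t3.2} and the $L^2$ decay of Corollary \ref{c3.7}. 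This forces $u_c\equiv 0$, contradicting $A_c>0$.

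The main obstacle is precisely this last step. In the energy-critical defocusing case $p=5$, conservation of energy and the classical Morawetz identity of \cite{Mm} both scale like the critical norm $\dot H^1\times L^2$, which is why the proofs of Struwe, Grillakis, Shatah--Struwe, Bahouri--Shatah go through. In the supercritical regime $p>5$, no conservation law lives at the $\hh$ scale, so any monotonicity identity one sets up produces error terms that are a priori supercritical and cannot be absorbed by the Strichartz or energy norms alone. The pointwise and integrated decay of Theorem \ref{t3.2}, which ``breaks the scaling'' past what a merely bounded $\hh$-norm would predict, is exactly what is needed to make these error terms summable and to close the virial argument; this is the delicate point of the whole proof and the reason why the decay estimate of Section 3 is the real engine of Theorem \ref{t4.1}.
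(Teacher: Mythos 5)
Your proposal follows the same concentration-compactness/rigidity architecture as the paper, and the two rigidity cases you isolate (finite-time self-similar concentration, and global-in-time compactness plus a virial/Morawetz identity fed by the decay of Theorem \ref{t3.2} and Corollary \ref{c3.7}) are indeed the two special cases the paper proves in Section 5 (Propositions \ref{p5.4} and \ref{p5.5}). However, there is a genuine gap in the step where you pass from the critical element to the ``full compactness property \eref{e3.1}.'' The concentration-compactness extraction (Propositions \ref{p4.6}, \ref{p4.7}, Lemma \ref{l4.13}) only yields a critical element whose rescaled trajectory is precompact in $\hh$ for $t\in[0,T_+)$, with $\lambda(t)\ge A_0>0$ on $[0,T_+)$ — nothing is asserted about negative times, and the standing hypothesis of Theorem \ref{t4.1} is likewise one-sided. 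You then write ``either $T_+<+\infty$ or $T_+=+\infty$ with the full compactness property \eref{e3.1},'' but the hypotheses of Theorem \ref{t3.2} and Corollary \ref{c3.7} are two-sided: they require $\overline K$ to be compact with $\lambda(t)\ge A_0>0$ for \emph{all} $t\in(-\infty,+\infty)$. This is not automatic, and it is not handled by any of the ``standard arguments'' you cite; closing it is the entire content of Section 6 of the paper.

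Concretely, what is missing is a mechanism to upgrade one-sided compactness on $[0,+\infty)$ to two-sided compactness on $\mathbb{R}$. The paper's Section 6 does this with a Martel--Merle style argument: Lemma \ref{l6.1} takes $t_n\to+\infty$, extracts a limiting solution $v$ whose rescaled trajectory also lies in $\overline K$ and which is globally defined; but the new modulation $\widetilde\lambda(\tau)$ is only controlled pointwise by \eref{e6.2} with a constant $M(\tau)$ depending on $\tau$, so one cannot yet invoke Proposition \ref{p5.5}. Lemmas \ref{l6.5} and \ref{l6.6} then establish $\lambda(t)\to+\infty$ and a near-monotonicity of $\lambda$, Remark \ref{r6.7} replaces $\lambda$ by a genuinely nondecreasing $\lambda_1$ with the same compactness, and Lemma \ref{l6.8} plus a geometric-series argument on $t_{n+1}-t_n$ finish the contradiction. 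Without some version of this (or an alternative way to obtain the decay estimate of Theorem \ref{t3.2} from forward-only compactness — which would require reproving that theorem under weaker hypotheses), your invocation of Theorem \ref{t3.2} in the $T_+=+\infty$ branch does not apply, and the rigidity argument does not close. The rest of your outline (definition of $A_c$, profile decomposition, the shrinking-support plus energy-conservation argument for $T_+<\infty$, and the identification of why scaling-breaking decay is what lets the virial/Morawetz argument survive in the supercritical regime) matches the paper's strategy.
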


We point out first some immediate consequences of Theorem \ref{t4.1}.

\begin{cor}\label{c4.2}
If $(u_0,u_1)$ is radial $\in\hh$ is such that $\tpu<+\infty$, then
\begin{displaymath}
\varlimsup_{t\uparrow\tpu}\nhh{u(t)}{\partial_tu(t)}=+\infty.
\end{displaymath}
\end{cor}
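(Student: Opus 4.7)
The plan is to derive the corollary as a direct contrapositive of Theorem \ref{t4.1}. Suppose, toward a contradiction, that $\tpu<+\infty$ while
\[
\varlimsup_{t\uparrow\tpu}\nhh{u(t)}{\partial_tu(t)}=M<+\infty.
\]
By the definition of $\varlimsup$, there exists $t_1\in(0,\tpu)$ with $\nhh{u(t)}{\partial_tu(t)}\le M+1$ for every $t\in[t_1,\tpu)$. On the compact interval $[0,t_1]$ the map $t\mapsto(u(t),\partial_tu(t))$ is continuous into $\hh$, by Definition \ref{d2.10}, and therefore bounded. Combining these two bounds gives
\[
\sup_{0<t<\tpu}\nhh{u(t)}{\partial_tu(t)}<+\infty.
\]

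With this finite supremum hypothesis in hand, the radial assumption transfers directly from $(u_0,u_1)$ to $u(t)$ (by uniqueness in Theorem \ref{t2.6}, exactly as invoked at the start of Section 3), so the hypotheses of Theorem \ref{t4.1} are satisfied. Applying Theorem \ref{t4.1} yields $\tpu=+\infty$, contradicting the standing assumption $\tpu<+\infty$. This contradiction forces $\varlimsup_{t\uparrow\tpu}\nhh{u(t)}{\partial_tu(t)}=+\infty$.

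There is essentially no obstacle beyond bookkeeping: the whole content of the corollary is packaged inside Theorem \ref{t4.1}, and the only subtlety is ensuring that one actually has a uniform bound on $(0,\tpu)$ rather than merely a finite $\varlimsup$ at the endpoint. That subtlety is handled by the continuity of $(u,\partial_tu)$ in $\hh$, which is built into the definition of a solution (Definition \ref{d2.10}) and guarantees boundedness on any compact subinterval of $[0,\tpu)$.
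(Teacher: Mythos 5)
Your proof is correct and coincides with the paper's intended argument: the corollary is stated in the paper as an immediate consequence of Theorem \ref{t4.1}, obtained exactly by this contrapositive. Your extra care in upgrading the finite $\varlimsup$ to a uniform bound on $(0,\tpu)$ via continuity of $(u,\partial_tu)$ in $\hh$ is the right way to fill in the small gap the paper leaves implicit.
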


\begin{cor}\label{c4.3}
The set of radial $(u_0,u_1)\in\hh$ such that 
\begin{displaymath}
\sup_{t\in[0,\tpu)}\nhh{u(t)}{\partial_tu(t)}<\infty
\end{displaymath}
is an open subset of $\hh$.
\end{cor}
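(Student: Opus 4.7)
My plan is to fix any $(u_0,u_1)$ in the set in question and produce an $\hh$-neighborhood of it contained in that same set. First I would use the hypothesis $A := \sup_{t \in [0,\tpu)} \nhh{u(t)}{\partial_t u(t)} < \infty$ together with Theorem \ref{t4.1} to upgrade this to $\tpu = +\infty$ with scattering at $+\infty$; via Lemma \ref{l2.11} and Remark \ref{r2.12}, scattering is equivalent to the globally finite Strichartz norm
$$M \;:=\; \ns{u}{[0,+\infty)} \;<\; \infty.$$
With this in hand, $u$ plays the role of a very good approximate solution on the full forward half-line, and what remains is a continuous-dependence statement.

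The main step would be to invoke the Perturbation Theorem (Theorem \ref{t2.16}) on the unbounded interval $I = [0,+\infty)$ with $\widetilde u = u$, $t_0 = 0$, and error $e \equiv 0$. For any radial $(u_0', u_1')$ with $\nhh{u_0'-u_0}{u_1'-u_1} \le A'$, the Strichartz estimate of Lemma \ref{l2.2} yields
$$\nsi{S(t)(u_0'-u_0,\,u_1'-u_1)} \;\le\; C\,\nhh{u_0'-u_0}{u_1'-u_1} \;\le\; C A',$$
while the error contribution $\nlli{D^{\alpha_p}e}{4/3}{4/3}$ vanishes. Taking $A'$ below the threshold $\epsilon_0(M,A,A')$ furnished by Theorem \ref{t2.16}, the perturbation theorem produces the solution $u'$ with data $(u_0', u_1')$ on all of $[0,+\infty)$ together with
$$\sup_{t \ge 0}\nhhx{(u'(t),\partial_t u'(t)) - (u(t),\partial_t u(t))} \;\le\; C(M,A,A')\bigl(A' + \epsilon^\alpha\bigr).$$
By uniqueness of solutions in the sense of Definition \ref{d2.10}, this coincides with the maximal solution, so $T_+((u_0',u_1')) = +\infty$, and combining with the bound $A$ on $u$ gives $\sup_{t \ge 0}\nhh{u'(t)}{\partial_t u'(t)} < \infty$. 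Hence $(u_0',u_1')$ lies again in the set, which is therefore open.

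The one point requiring care would be the legality of applying Theorem \ref{t2.16} on the unbounded interval $[0,+\infty)$. The proof in Section 2 partitions $I$ into finitely many subintervals on which $\ns{\widetilde u}{I_j}$ (and the related $D^{\beta}\widetilde u$ norm) sits below a small threshold, and such a finite partition of the half-line is available precisely because $M = \ns{u}{[0,+\infty)} < \infty$. This is the only non-routine ingredient, and it is exactly why I would need the full scattering conclusion of Theorem \ref{t4.1}, not merely global existence: knowing only $\tpu = +\infty$ would not guarantee a finite partition into small-$S_p$ pieces. Everything else is a direct unpacking of the Strichartz bound and the quantitative perturbation statement.
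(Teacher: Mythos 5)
Your argument is correct and is exactly the paper's: the paper's proof of Corollary \ref{c4.3} is precisely the one-line observation that Theorem \ref{t4.1} (via its proof) gives $\ns{u}{[0,+\infty)}<\infty$, after which Theorem \ref{t2.16} yields openness, and your write-up simply fills in the unpacking of that. Your closing remark that the finite global $S_p$ norm is what makes the finite subdivision into small pieces possible on $[0,+\infty)$ is the right observation about why the perturbation theorem applies on the whole half-line.
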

\begin{proof}
This is because, for such data, in light of Theorem \ref{t4.1} (and its proof), we have $\ns{u}{[0,+\infty)}<\infty$ and this gives an open set from Theorem \ref{t2.16}
\end{proof}

In order to start the proof of Theorem \ref{t4.1}, we need some definitions, in analogy with \cite{KM3}.

\begin{dfn}\label{d4.4}
For $A>0$, $p>5$,
\begin{multline*}
B(A)=\Big\{(u_0,u_1)\in\hh:\text{ if $u$ is the solution of \eref{w+}, with initial}\\\text{ data $(u_0,u_1)$ at $t=0$, then }
\sup_{t\in[0,\tpu)}\nhh{u(t)}{\partial_tu(t)}\leq A\Big\}.
\end{multline*}
We also set $B(\infty)=\bigcup_{A>0}B(A)$.
\end{dfn}

\begin{dfn}\label{d4.5}
We say that $SC(A)$ holds if for each $(u_0,u_1)\in B(A)$,\\ $\tpu=+\infty$ and $\ns{u}{[0,+\infty)}<\infty$. We also say that $SC(A;(u_0,u_1))$ holds if
$(u_0,u_1)\in B(A)$, $\tpu=+\infty$, $\ns{u}{[0,+\infty)}<\infty$.
\end{dfn}

We can define similarly $B_{rad}(A)$, $B_{rad}(\infty)$, $SC_{rad}(A)$, if we restrict to $(u_0,u_1)$ radial.

By Theorem \ref{t2.6}, Remark \ref{r2.8}, Theorem \ref{t2.16} we see that for $\widetilde\delta_0$  small enough, if $\nhh{u_0}{u_1}<\widetilde\delta_0$, then $SC(C\widetilde\delta_0;(u_0,u_1))$ holds. Hence, there exists $A_0>0$ small enough, such that $SC(A_0)$ holds. Theorem \ref{t4.1} is equivalent to the statement that $SC\rr(A)$ holds for each $A>0$. Similarly, Theorem \ref{t4.1}, without the radial restriction, is equivalent to the statement that 
$SC(A)$ holds for each $A>0$. Thus, if Theorem \ref{t4.1} fails, there exists a critical value $A_C>0$, with the property that if $A<A_C$, $SC\rr(A)$ holds, but if $A>A_C$, $SC\rr(A)$ fails. The concentration-compactness procedure introduced by the authors in \cite{KM1} and \cite{KM3} consists in establishing the following propositions:

\begin{pp}\label{p4.6}
There exists $(u_{0,C},u_{1,C})$ radial such that $SC(A_C,\!(u_{0,C},\!u_{1,C}))$ fails.
\end{pp}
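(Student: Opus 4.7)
The plan is to follow the standard concentration--compactness procedure of \cite{KM1,KM2,KM3}, which reduces the construction of a critical element to a linear profile decomposition, a Pythagorean norm identity, and the perturbation theorem (Theorem \ref{t2.16}). First, I would choose a sequence of radial data $(u_{0,n},u_{1,n})\in\hh$ with $\nhh{u_{0,n}}{u_{1,n}}\leq A_n$ and $A_n\downarrow A_C$ such that, for each $n$, $SC_{rad}(A_n;(u_{0,n},u_{1,n}))$ fails. Our goal is to extract, after appropriate scaling and a subsequence, a weak limit which will serve as $(u_{0,C},u_{1,C})$ and which saturates the critical norm $A_C$.

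Next I would apply the linear profile decomposition (Bahouri--G\'erard type, adapted to $\hh$ and to radial functions, so that the only remaining non-compactness symmetry is scaling, together with time translations) to write
\begin{equation*}
(u_{0,n},u_{1,n})=\sum_{j=1}^{J}\left(\scl{V^j_0}{2/(p-1)}{\lambda_{j,n}}{x}{0},\;\scl{V^j_1}{2/(p-1)+1}{\lambda_{j,n}}{x}{0}\right)+(w^J_{0,n},w^J_{1,n}),
\end{equation*}
where the profiles $(V^j_0,V^j_1)\in\hh$ are radial, the parameters $(\lambda_{j,n},t_{j,n})$ satisfy the usual pseudo-orthogonality, and the error satisfies $\limsup_{n\to\infty}\ns{S(t)(w^J_{0,n},w^J_{1,n})}{\R}\to0$ as $J\to\infty$. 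The Pythagorean identity
\begin{equation*}
\nhh{u_{0,n}}{u_{1,n}}^2=\sum_{j=1}^{J}\nhh{V^j_0}{V^j_1}^2+\nhh{w^J_{0,n}}{w^J_{1,n}}^2+o_n(1)
\end{equation*}
implies, in particular, $\sum_{j\geq 1}\nhh{V^j_0}{V^j_1}^2\leq A_C^2$.

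Associate to each profile a nonlinear profile $U^j$, the solution of \eref{w+} with appropriate data, rescaled and time-translated according to $(\lambda_{j,n},t_{j,n})$. The heart of the argument is then the following alternative: if at least two profiles are non-trivial, each has $\hh$ norm strictly less than $A_C$ by the Pythagorean identity, hence by the definition of $A_C$ each $U^j$ is globally defined with finite $S_p$ norm, and the standard ``pseudo-orthogonal sum of profiles'' approximate solution can be plugged into the perturbation theorem to conclude that $u_n$ itself satisfies $SC$ for $n$ large, contradicting the choice of the sequence. A similar contradiction arises if only one profile is non-trivial but $\nhh{V^1_0}{V^1_1}<A_C$. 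Therefore there must be exactly one profile, with $\nhh{V^1_0}{V^1_1}=A_C$, and the Strichartz norm of $w^J_n$ and the remaining profile sum tend to $0$. After normalizing by scaling (and using time translation to place $t_{1,n}$ at $0$ or $\pm\tpu$), the corresponding nonlinear profile $U^1$ provides the desired $(u_{0,C},u_{1,C})$, and a standard argument (e.g., as in Proposition 4.2 of \cite{KM1}) shows $SC(A_C;(u_{0,C},u_{1,C}))$ must fail.

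The main obstacle, as in the earlier works \cite{KM1,KM2,KM3}, is justifying the approximate-solution step: one must verify, using Lemma \ref{l2.2}, \eref{e2.4}, and the perturbation theorem \ref{t2.16}, that the superposition of rescaled nonlinear profiles is a good approximate solution to \eref{w+} in the sense of Theorem \ref{t2.16}, with error $e$ going to zero in the $L^{4/3}_tL^{4/3}_x$ norm of $D^{\alpha_p}e$. This is where the pseudo-orthogonality of the scaling parameters $\lambda_{j,n}$ and time shifts $t_{j,n}$ enters, together with the finiteness of $\sum_j \nsi{U^j}^{p-1}$, and the fact that $p>5$ creates some extra care in the H\"older exponents but no new conceptual difficulty. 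The radiality is used to avoid spatial translations in the profile decomposition; apart from that, the scheme is the same as in \cite{KM3}.
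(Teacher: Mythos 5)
Your proposal follows the same concentration--compactness route the paper has in mind; the paper in fact omits the proof of Proposition \ref{p4.6} entirely, saying only that the procedure in Section 3 of \cite{KM3} can be carried out once Theorem \ref{t4.9} (profile decomposition) and Theorem \ref{t2.16} (perturbation) are in hand. Your outline --- minimizing sequence, linear profile decomposition with Pythagorean expansion, nonlinear profiles, perturbation theorem, reduction to a single profile --- is exactly that procedure, with the radial assumption correctly invoked to kill the spatial translations.

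One step is glossed over, and it is precisely where this $\dot H^{s_p}$-critical setting differs from the energy-critical one. You assert that ``each [nonlinear profile] has $\hh$ norm strictly less than $A_C$ by the Pythagorean identity, hence by the definition of $A_C$ each $U^j$ is globally defined with finite $S_p$ norm.'' But $A_C$ is defined through the sets $B_{rad}(A)$, and $(V_0^j,V_1^j)\in B_{rad}(A)$ requires the bound $\sup_{t\in[0,T_+)}\nhh{U^j(t)}{\partial_t U^j(t)}\leq A$ on the \emph{solution for all time}, not merely $\nhh{V_0^j}{V_1^j}<A_C$ at time zero. Since $\nhhx{\cdot}$ is not conserved under the flow (unlike the energy in \cite{KM1,KM2}), knowing the data has small norm does not by itself place the profile in $B_{rad}(A)$ for some $A<A_C$. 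The procedure in \cite{KM3} resolves this by propagating the a priori bound $\sup_t\nhh{u_n(t)}{\partial_t u_n(t)}\leq A_n$ to each nonlinear profile --- applying the orthogonal expansion at a time sequence where $U^j$ nearly attains its $\hh$-supremum, and using the perturbation theorem to compare $u_n$ to the profile sum at those times. You should make that step explicit: it is the main extra bookkeeping relative to the energy-critical argument and is not subsumed by the ``approximate-solution'' step you single out as the chief obstacle. Apart from this, your proposal is the intended one.
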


\begin{pp}\label{p4.7}
If $(u_{0,C},u_{1,C})$ is as in Proposition \ref{p4.6}, there exists $\lambda(t)\in\R^+$, for $t\in[0,T_+(u_{0,C},u_{1,C}))$, such that
\begin{multline*}
K=\Bigg\{\vec v(x,t)=\left(\scll{u_C}{a}{x},\scll{\partial_tu_C}{a+1}{x}\right),\\ 0\leq t<T_+(u_{0,C},u_{1,C})\Bigg\}
\end{multline*}
has the property that $\overline K$ is compact in $\hh$. Here $u_C$ is the solution of \eref{w+}, with data $(u_{0,C},u_{1,C})$ at $t=0$.
\end{pp}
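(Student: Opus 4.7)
The plan is to argue by contradiction, using a radial linear profile decomposition in $\hh$ together with the criticality of $A_C$, in the concentration--compactness framework of \cite{KM1, KM3}. Suppose $\overline K$ is not compact in $\hh$ for any choice of scaling $\lambda(t)$. Then one can extract a sequence $t_n \in [0, T_+((u_{0,C},u_{1,C})))$ such that $(u_C(t_n), \partial_t u_C(t_n))$, after rescaling by any positive sequence $\lambda_n$, has no convergent subsequence in $\hh$. The strategy is to decompose this sequence into nonlinear profiles and an error, rule out having more than one profile via the critical-threshold argument, and read off $\lambda(t)$ from the unique surviving profile.

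In the radial setting (no spatial translations enter the decomposition), applying the linear profile decomposition to the bounded sequence $(u_C(t_n), \partial_t u_C(t_n))$ yields, up to a subsequence, profiles $(V^j_0, V^j_1) \in \hh$, pairwise orthogonal frames $(\lambda_{j,n}, t_{j,n})$, and remainders $(W^J_{0,n}, W^J_{1,n})$ with $\lim_{J \to \infty}\limsup_{n\to\infty}\ns{S(t)(W^J_{0,n}, W^J_{1,n})}{\R} = 0$, together with an asymptotic Pythagorean identity for the $\hh$-norms. To each profile one associates a nonlinear profile $V^j$ solving \eqref{w+}, determined by the limiting time shift $-t_{j,n}/\lambda_{j,n}$, and forms an approximate solution $\widetilde u_n$ as the rescaled superposition of the $V^j$ plus $S(t)(W^J_{0,n}, W^J_{1,n})$.

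The critical-threshold step uses that $(u_{0,C},u_{1,C})$ lies at the level $A_C$. If two or more profiles were nontrivial, the Pythagorean identity would force each profile's $\hh$-norm to be strictly below $A_C$; the inductive hypothesis $SC\rr(A)$ for $A < A_C$ then yields each $V^j$ global in time with finite $\ns{V^j}{\R}$. Orthogonality of the frames ensures that $\widetilde u_n$ is a good approximate solution of \eqref{w+} for large $n$ with small error in the norms appearing in Theorem \ref{t2.16}, and that theorem gives $\ns{u_C}{[0,T_+((u_{0,C},u_{1,C})))} < \infty$, hence scattering, contradicting Proposition \ref{p4.6}. Therefore exactly one profile, say $V^1$, is nontrivial, and $(W^J_{0,n}, W^J_{1,n}) \to 0$ in $\hh$.

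Writing this surviving profile with scale $\lambda_{1,n}$ and shift $t_{1,n}$, a further application of Theorem \ref{t2.16} rules out $|t_{1,n}|/\lambda_{1,n} \to \infty$: in that regime $V^1$ would be asymptotically free, forcing scattering of $u_C$ by the same argument. Thus, along a subsequence, $t_{1,n}/\lambda_{1,n}$ converges and $(u_C(t_n), \partial_t u_C(t_n))$ is precompact modulo rescaling by $\lambda_{1,n}$; define $\lambda(t_n) := \lambda_{1,n}$. Continuous dependence (Remark \ref{r2.33}) and uniqueness of the resulting $\hh$-limit allow one to extend $\lambda$ consistently to every $t \in [0, T_+((u_{0,C},u_{1,C})))$, yielding the desired compactness of $\overline K$. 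The main obstacle I anticipate is the careful bookkeeping in the profile step: handling the dichotomy on $t_{j,n}/\lambda_{j,n}$, and verifying that profiles escaping to $\pm\infty$ in time are accurately approximated by free evolutions so that Theorem \ref{t2.16} applies uniformly in $n$.
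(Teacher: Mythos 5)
Your sketch follows the same route the paper itself takes: it defers to the concentration--compactness procedure of \cite{KM3}, built on the linear profile decomposition (Theorem \ref{t4.9}) and the long-time perturbation theorem (Theorem \ref{t2.16}). The overall structure you lay out --- extract a bad sequence $t_n$, decompose, rule out two or more nontrivial profiles at the critical level, rule out $|t_{1,n}|/\lambda_{1,n}\to\infty$, read off $\lambda$ --- is the right one.

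There is, however, a genuine gap at the critical-threshold step, and it is precisely the technical heart of the $\hh$-bounded setting. You write that if two or more profiles are nontrivial, ``the Pythagorean identity would force each profile's $\hh$-norm to be strictly below $A_C$; the inductive hypothesis $SC\rr(A)$ for $A<A_C$ then yields each $V^j$ global.'' But $SC\rr(A)$ is indexed by membership in $B\rr(A)$, i.e. by the bound $\sup_{\tau\in[0,T_+(V^j))}\nhh{V^j(\tau)}{\partial_\tau V^j(\tau)}\leq A$, not by the size of the \emph{initial} data $\nhh{V^j_0}{V^j_1}$. The Pythagorean expansion at the single time $t_n$ only controls the latter. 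Unlike the energy-critical problem, there is no conserved quantity here that propagates an initial-data bound to a sup-over-time bound, so from ``$\nhh{V^j_0}{V^j_1}<A_C$'' one cannot directly invoke $SC\rr(A)$ for some $A<A_C$. Closing this gap requires the argument from Section~3 of \cite{KM3}: one must revisit the decomposition at shifted times $t_n+\tau/\lambda_{j,n}$, use orthogonality and the perturbation theorem in a bootstrap to approximate $u_C$ there by the translated nonlinear profiles, and apply the Pythagorean expansion again to conclude that $\sup_\tau\nhh{V^j(\tau)}{\partial_\tau V^j(\tau)}^2\leq A_C^2-\eta$ for some $\eta>0$ once a second nontrivial profile is present. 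Only then is $(V^j_0,V^j_1)\in B\rr(A)$ for some $A<A_C$, and the induction applies. A secondary, more minor point: your final step ``define $\lambda(t_n):=\lambda_{1,n}$ and extend by continuous dependence'' elides the standard but nontrivial passage from sequential precompactness modulo scaling to a globally defined $\lambda(t)$ (cf.\ Remark~5.4 of \cite{KM1}), which should at least be flagged.
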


\begin{rem}\label{r4.8}
The absence of the parameter $x(t)$ in Proposition \ref{p4.7} comes from the fact we are in the radial setting. (See also Remark 4.23 in \cite{KM1}). 
\end{rem}

A key tool in the proof of Propositions \ref{p4.6} and \ref{p4.7} is the ``profile decomposition'' due to Bahouri--G\'erard \cite{BG}. The profile decomposition was simultaneously discovered by Merle--Vega \cite{MV} in the mass critical NLS for $d=2$ context and later developed by Keraani \cite{Ke} for the energy critical NLS. Here the ``profile decomposition'' is:

\begin{thm}\label{t4.9}
Given $\{(v_{0,n},v_{1,n})\}\subseteq\hh$, with 
$$\nhh{v_{0,n}}{v_{1,n}} \leq A,$$
there exists a sequence $\{(V_{0,j},V_{1,j})\}$ in $\hh$, a subsequence of  
$(v_{0,n},v_{1,n})$ (which we still denote $(v_{0,n},v_{1,n})$) and a sequence of triples $(\lambda_{j,n};x_{j,n};$ $t_{j,n})\in\R^+\times\R^3\times\R$, which are ``orthogonal'', i.e.
\begin{displaymath}
\frac{\lambda_{j,n}}{\lambda_{j',n}}+\frac{\lambda_{j',n}}{\lambda_{j,n}}+\frac{|t_{j,n}-t_{j',n}|}{\lambda_{j,n}}+\frac{|x_{j,n}-x_{j',n}|}{\lambda_{j,n}}
\xrightarrow[n\to\infty]{}+\infty,
\end{displaymath}
for $j\neq j'$; such that for each $J\geq1$, we have
\begin{itemize}
\item[i)] $\displaystyle v_{0,n}=\sum_{j=1}^J\Scl{V_j^l}{a}{\lambda_{j,n}}{.-x_{j,n}}{-t_{j,n}}+w_{0,n}^J$\\
          $\displaystyle v_{1,n}=\sum_{j=1}^J\Scl{\partial_tV_j^l}{a+1}{\lambda_{j,n}}{.-x_{j,n}}{-t_{j,n}}+w_{1,n}^J,$\\
where $V_j^l(x,t)=S(t)((V_{0,j},V_{1,j}))$ ($l$ stands for linear solution)
\item[ii)] $\displaystyle\varlimsup_{n\to\infty}\ns{S(t)((w_{0,n}^J,w_{1,n}^J))}{-\infty,+\infty}\xrightarrow[J\to\infty]{}0$
\item[iii)] For each $J\geq1$ we have\\
$\displaystyle\nh{v_{0,n}}^2=\sum_{j=1}^J\nh{V_{0,j}}^2+\nh{w_{0,n}^J}^2+\epsilon_0^J(n)$\\
$\displaystyle\nH{v_{1,n}}^2=\sum_{j=1}^J\nH{V_{1,j}}^2+\nh{w_{1,n}^J}^2+\epsilon_1^J(n)$\\
where $|\epsilon_0^J(n)|+|\epsilon_1^J(n)|\xrightarrow[n\to\infty]{}0$.
\end{itemize}
\end{thm}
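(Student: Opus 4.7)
The plan is to follow the iterative profile extraction scheme of Bahouri--G\'erard, adapted to the $\hh$ setting via the Strichartz estimate of Lemma \ref{l2.2}. Two ingredients drive the argument. First, a \emph{refined Strichartz inequality} of the form
\[
\ns{S(t)(v_0,v_1)}{\R} \leq C\,\nhh{v_0}{v_1}^{1-\theta}\,N(v_0,v_1)^\theta,
\]
for some $\theta \in (0,1)$, where $N$ is a weaker Besov-type norm obtained by combining a Littlewood--Paley decomposition with Lemma \ref{l2.2}. Second, an \emph{inverse Strichartz lemma}: if $\nhh{v_{0,n}}{v_{1,n}} \leq A$ and $\varliminf_n \ns{S(t)(v_{0,n},v_{1,n})}{\R} \geq \eta > 0$, then there exist a nontrivial $(V_0,V_1) \in \hh$ and parameters $(\lambda_n, x_n, t_n) \in \R^+ \times \R^3 \times \R$ such that, along a subsequence, the rescaled/translated/time-shifted linear evolutions converge weakly in $\hh$ to $S(t)(V_0,V_1)$, with $\nhh{V_0}{V_1} \gtrsim \eta^\alpha A^{1-\alpha}$ for some $\alpha > 0$. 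This lemma follows from the refined estimate by a weak-limit argument in Fourier/physical space that identifies the frequency scale $\lambda_n$ and space-time center $(x_n,t_n)$ where concentration occurs.

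With these two tools in hand, I proceed by induction. Set $(w_{0,n}^0, w_{1,n}^0) := (v_{0,n}, v_{1,n})$ and at stage $J \geq 1$ define
\[
\mu_J := \varlimsup_{n\to\infty}\ns{S(t)(w_{0,n}^{J-1}, w_{1,n}^{J-1})}{\R}.
\]
If $\mu_J = 0$, terminate; otherwise apply the inverse Strichartz lemma to extract a profile $(V_{0,J}, V_{1,J})$ of size at least a fixed fraction of $\mu_J$, together with parameters $(\lambda_{J,n}; x_{J,n}; t_{J,n})$. Define $(w_{0,n}^J, w_{1,n}^J)$ by subtracting the $J$-th rescaled/translated/time-shifted profile, as in the statement of the theorem. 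By construction, under each change of variables $(\lambda_{j,n}; x_{j,n}; t_{j,n})$, $j \leq J$, the remainder converges weakly to zero in $\hh$.

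The orthogonality of the parameter triples follows by a contradiction/subsequence argument: if some pair $(j,j')$ failed to satisfy the stated orthogonality, then along a subsequence the corresponding profiles would partially overlap, contradicting the weak-to-zero convergence of the later remainder under the earlier parameters. The Pythagorean decomposition (iii) is then a standard Hilbert-space calculation: orthogonality of parameters ensures that cross terms in the $\dot H^{s_p}$ and $\dot H^{s_p-1}$ inner products vanish in the limit $n \to \infty$, after a change of variables and use of weak convergence. From (iii) we deduce $\sum_{j \geq 1} \nhh{V_{0,j}}{V_{1,j}}^2 \leq A^2$, so the profile sizes tend to zero. Combined with the inverse Strichartz lemma applied to the remainders $(w_{0,n}^J, w_{1,n}^J)$, this forces $\mu_J \to 0$, yielding (ii).

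The main obstacle is verifying the refined and inverse Strichartz statements at the fractional scale $\hh$ with $1 < s_p < 3/2$: one must check that the Besov-type refinement of Lemma \ref{l2.2} is available for the particular Strichartz pair appearing in the $S_p$ norm, and that the weak-limit extraction is compatible with both rescaling exponents $a = 2/(p-1)$ and $a+1$ appearing in the theorem. Once this analytic input is secured, the extraction scheme runs exactly as in the energy-critical wave case \cite{BG} and produces the stated decomposition.
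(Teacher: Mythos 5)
Your proposal follows essentially the same Bahouri--G\'erard scheme (refined/inverse Strichartz, iterative profile extraction, orthogonality of parameters, Pythagorean expansion) that the paper invokes; indeed, the paper gives no proof at all but simply states that the argument is identical to that of \cite{BG} (with the radial adaptations noted in \cite{KM1}, \cite{KM2}) and omits it. You have correctly flagged the only point requiring care, namely checking the refined Strichartz/concentration step at the fractional regularity $s_p$ with the two scaling exponents $a$ and $a+1$, which the paper treats as routine.
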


\begin{rem}\label{r4.10}
If $(v_{0,n},v_{1,n})$ are radial, we can choose $(V_{0,j},V_{1,j})$ radial, $x_{j,n}\equiv0$.
\end{rem}

Theorem \ref{t4.9} is proved, for $p=5$ in \cite{BG}. See also Remark 4.4 in \cite{KM2} and Remark 4.23 in \cite{KM1}. The proof of Theorem \ref{t4.9} is identical to the one in \cite{BG} and will be omitted.

Once we have at our disposal Theorem \ref{t2.16} and Theorem \ref{t4.9}, the procedure used in section 3 of \cite{KM3} can be followed to give a proof of Proposition \ref{p4.6} and Proposition \ref{p4.7}. We omit the details.

\begin{cor}\label{c4.11}
There exists a function $g:(0,+\infty)\to[0,\infty)$ such that, for every $(u_0,u_1)\in B\rr(A)$, we have $\ns{u}{[0,+\infty)}\leq g(A)$.
\end{cor}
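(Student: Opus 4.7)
The plan is a standard contradiction argument based on the profile decomposition (Theorem \ref{t4.9}) and the perturbation theorem (Theorem \ref{t2.16}), exploiting the fact that Theorem \ref{t4.1} has already been established. Define $g(A):=\sup\bigl\{\ns{u}{[0,+\infty)}:(u_0,u_1)\in B_{rad}(A)\bigr\}$; by Theorem \ref{t4.1} this quantity is well-defined in $[0,+\infty]$, and we must show $g(A)<+\infty$.

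Suppose, for contradiction, that $g(A_0)=+\infty$ for some $A_0>0$. Then there exists a sequence $\{(u_{0,n},u_{1,n})\}\subset B_{rad}(A_0)$ whose solutions $u_n$ are global forward in time (by Theorem \ref{t4.1}) and satisfy $\ns{u_n}{[0,+\infty)}\to+\infty$. Applying Theorem \ref{t4.9} and Remark \ref{r4.10}, I extract (along a subsequence) a radial profile decomposition with profiles $(V_{0,j},V_{1,j})$ and orthogonal parameters $(\lambda_{j,n};0;t_{j,n})$. To each profile I associate its nonlinear profile $U_j$: the solution of \eref{w+} that either agrees with $V_j^l(\cdot,s_j)$ at $t=s_j$ (when $-t_{j,n}/\lambda_{j,n}\to s_j\in\R$ along a subsequence) or scatters to $V_j^l$ at $\pm\infty$ (when $-t_{j,n}/\lambda_{j,n}\to\pm\infty$), exactly as in Section~3 of \cite{KM3}.

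From item iii) of Theorem \ref{t4.9}, $\sum_j\nhh{V_{0,j}}{V_{1,j}}^2\leq A_0^2$; hence only finitely many profiles have initial data above the small-data threshold $\widetilde\delta_p$ of Remark \ref{r2.8}, and for each remaining ``small'' profile, small-data theory gives $\ns{U_j}{\R}\leq C\nhh{V_{0,j}}{V_{1,j}}$, so that $\sum_{j\text{ small}}\ns{U_j}{\R}^{2(p-1)}<\infty$. For each ``large'' profile, a perturbation argument (Theorem \ref{t2.16}) applied to $U_j$ versus a rescaled-and-translated $u_n$ shows $\sup_t\nhhx{(U_j(t),\partial_tU_j(t))}\leq A_0$, so $U_j\in B_{rad}(A_0)$ up to time translation, and Theorem \ref{t4.1} yields $\ns{U_j}{\R}<+\infty$ for each such $j$. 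Setting
\begin{displaymath}
\widetilde u_n^J(x,t):=\sum_{j=1}^J\frac{1}{\lambda_{j,n}^a}\,U_j\!\left(\frac{x}{\lambda_{j,n}},\frac{t-t_{j,n}}{\lambda_{j,n}}\right)+S(t)(w_{0,n}^J,w_{1,n}^J),
\end{displaymath}
and using orthogonality of parameters together with item ii) of Theorem \ref{t4.9}, one verifies that $\widetilde u_n^J$ is an approximate solution of \eref{w+} in the sense of Theorem \ref{t2.16}, with error $\to 0$ as first $n\to\infty$ and then $J\to\infty$. The perturbation theorem then yields $\limsup_n\ns{u_n}{[0,+\infty)}^{2(p-1)}\leq C(A_0)\sum_{j\geq 1}\ns{U_j}{\R}^{2(p-1)}<+\infty$, contradicting $\ns{u_n}{[0,+\infty)}\to+\infty$.

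The main technical obstacle is the claim that each large nonlinear profile $U_j$ itself lies in $B_{rad}(A_0)$. This requires using the Pythagorean expansion of item iii) of Theorem \ref{t4.9} simultaneously at each time, together with the orthogonality of scales/times, to keep the $\hh$-norms of the rescaled individual profiles almost additive; any excess in $\sup_t\nhhx{(U_j(t),\partial_tU_j(t))}$ beyond $A_0$ would, via Theorem \ref{t2.16} applied in the reverse direction, force $\sup_t\nhhx{(u_n(t),\partial_tu_n(t))}>A_0$ for $n$ large, contradicting $(u_{0,n},u_{1,n})\in B_{rad}(A_0)$. Once this is in hand, the remainder is a routine bookkeeping exercise already carried out in \cite{KM1,KM2,KM3}.
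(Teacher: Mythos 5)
Your proposal follows exactly the route the paper indicates: a contradiction argument via the profile decomposition (Theorem \ref{t4.9}), using Theorem \ref{t4.1} to handle the nonlinear profiles and the perturbation theorem (Theorem \ref{t2.16}) to reassemble. One thing worth making explicit: the argument in Corollary 2 of \cite{BG} and Corollary 4.11 of \cite{Ke}, which the paper cites, concerns a \emph{conserved} quantity (energy, resp.\ mass), so in those works each nonlinear profile lies in the relevant bounded class essentially for free. Here the quantity $\nhhx{(\cdot,\cdot)}$ is not conserved by the flow, and you correctly single out the needed extra step --- showing $\sup_t\nhhx{(U_j(t),\partial_tU_j(t))}<\infty$ for each large profile --- as the technical crux. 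The way you resolve it (if some $U_{j_0}$ had $\nhhx{(U_{j_0}(T),\partial_tU_{j_0}(T))}$ exceeding $A_0$ at some time $T$, then evaluating the nonlinear profile approximation of $u_n$ at the rescaled time $t_{j_0,n}+\lambda_{j_0,n}T$ and invoking the asymptotic Pythagorean orthogonality would force $\nhh{u_n(t)}{\partial_tu_n(t)}>A_0$ for $n$ large) is the right idea, and it is exactly how the analogous uniform bound is proved in \cite{KM3} for the non-conserved $\dot H^{1/2}$ setting. To be fully rigorous you should note that one runs this reassembly argument only on finite time intervals $[0,T]$ with $T<T_{+,j_0}$, where all finitely many relevant profiles have finite $S_p$-norms a priori, which avoids the apparent circularity of invoking the perturbation theorem before knowing the $U_j$ scatter. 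With that caveat, the proposal is correct and matches the paper's intended proof.
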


The proof of Corollary \ref{c4.11} follows from Theorem \ref{t4.1} and Theorem \ref{t4.9} as in Corollary 2 in \cite{BG}, Corollary 4.11 in \cite{Ke}.

We denote $(u_{0,C},u_{1,C})$ as in Proposition \ref{p4.6} a ``critical element''. We now recall some further properties of ``critical elements''.

\begin{rem}\label{r4.12}
Because of the continuity of $(u(t),\partial_tu(t))$ in $\hh$, we can construct $\lambda(t)$ with $\lambda(t)$ continuous in 
$[0, T_+(u_{0,C},u_{1,C}))$. See the proof of Remark 5.4 of \cite{KM1}.
\end{rem}

\begin{lem}\label{l4.13}
Let $u_C$ be a critical element, as in Propositions \ref{p4.6}, \ref{p4.7}. Then, there is a (possibly different) solution $w$, with a corresponding 
$\widetilde\lambda$ (which can also be chosen continuous) and an $A_0>0$, so that $\widetilde\lambda\geq A_0$ for $t\in[0,T_+(w_0,w_1))$,
$\sup_{t\in[0,T_+(w_0,w_1))}\nhh{w(t)}{\partila_tw(t)}\!\!<\!\!\infty$ and $\ns{w}{[0,T_+(w_0,w_1))}=\!+\infty$.
\end{lem}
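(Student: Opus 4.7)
The plan is to build $w$ by rescaling and translating $u_C$ at a carefully chosen sequence of times where $\lambda$ is nearly minimal, extracting a subsequential limit via compactness of $\overline K$, and then time-reversing, so that the lower bound on the scaling obtained on negative times becomes a lower bound on the scaling on positive times of $w$. If $\inf_{t\in[0,T_+(u_C))}\lambda(t)>0$ already, take $w=u_C$ and there is nothing to prove. Otherwise, since $\lambda$ is continuous and strictly positive on $[0,T_+(u_C))$, any sequence $t_n$ with $\lambda(t_n)\to 0$ must satisfy $t_n\to T_+(u_C)$, and so I define $t_n:=\inf\{s:\lambda(s)\leq 1/n\}$; by continuity this gives $\lambda(t_n)\leq 1/n$ while $\lambda(s)>\lambda(t_n)$ for $s\in[0,t_n)$.

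Set $u_n(y,s):=\lambda(t_n)^{-a}u_C(y/\lambda(t_n),\,t_n+s/\lambda(t_n))$. Each $u_n$ solves \eref{w+} with the same $\hh$-bound as $u_C$ (scale invariance), and its associated scaling $\lambda_n(s):=\lambda(t_n+s/\lambda(t_n))/\lambda(t_n)$ satisfies $\lambda_n(0)=1$ and $\lambda_n(s)\geq 1$ for $s\in[-t_n\lambda(t_n),0]$ by the choice of $t_n$. The data $(u_n(\cdot,0),\partial_su_n(\cdot,0))$ is the element of $K$ at time $t_n$, so compactness of $\overline K$ yields a subsequence converging in $\hh$ to some $(w_0,w_1)\in\overline K$. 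Let $\bar w$ be the \eref{w+}-solution with data $(w_0,w_1)$ at $s=0$, and put $w(x,t):=\bar w(x,-t)$, which by time-reversal symmetry of \eref{w+} is a solution with data $(w_0,-w_1)$.

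By Theorem \ref{t2.16} together with Remark \ref{r2.33}, $u_n\to\bar w$ uniformly on every compact subinterval of $\bar w$'s maximal interval. Passing to the limit the identity $\lambda_n(s)^{-a}u_n(\cdot/\lambda_n(s),s)\in\overline K$ shows the trajectory of $\bar w$ sits in a compact subset of $\hh$, so $\bar w$ has the compactness property and, by the construction in Remark \ref{r4.12}, admits a continuous scaling $\widetilde\lambda_{\bar w}$; the bound $\lambda_n\geq 1$ transfers to give $\widetilde\lambda_{\bar w}(s)\geq 1$ on $(-T_-(\bar w),0]$. Setting $\widetilde\lambda(t):=\widetilde\lambda_{\bar w}(-t)$ produces $\widetilde\lambda\geq 1$ on $[0,T_+(w))=[0,T_-(\bar w))$, so $A_0=1$ works, and the bound $\sup_t\nhh{w(t)}{\partial_tw(t)}<\infty$ is immediate from compactness of $\overline K$.

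The main delicate point is proving $\ns{w}{[0,T_+(w))}=+\infty$. Since $\ns{u_C}{[0,T_+(u_C))}=+\infty$ and $t_n\to T_+(u_C)$, monotone convergence gives $\ns{u_C}{[0,t_n]}\to+\infty$, and scale-invariance of the $\ns$-norm then yields $\ns{u_n}{[-t_n\lambda(t_n),0]}\to+\infty$. Transferring this divergence onto $\bar w$'s backward maximal interval requires a case split on whether $t_n\lambda(t_n)$ stays bounded: if it does, continuous dependence of the maximal time (Remark \ref{r2.33}) forces $T_-(\bar w)<\infty$, and Lemma \ref{l2.11} gives $\ns{\bar w}{(-T_-(\bar w),0]}=+\infty$; if $t_n\lambda(t_n)\to\infty$, the divergence transfers to any fixed compact subinterval $[-M,0]$ via uniform $\ns$-convergence from Theorem \ref{t2.16}. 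Either way, $\bar w$ fails to scatter backward, so $w$ fails to scatter forward, completing the proof.
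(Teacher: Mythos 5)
Your overall strategy --- choosing $t_n$ at the ``first time $\lambda$ drops below $1/n$'' so that $\lambda(s)\geq\lambda(t_n)$ for $s\leq t_n$, rescaling, extracting a limit from $\overline K$, and time-reversing --- is the standard construction that the paper itself credits to \cite{KM1} (p.\ 670) and \cite{KM3} (Lemma 3.10), and the first half of your writeup (the bound $\widetilde\lambda\geq 1$, the uniform $\hh$-bound via compactness, the continuity of $\widetilde\lambda$) is sound, modulo one omission: you should record explicitly that $(0,0)\notin\overline K$ (a solution with data in a neighbourhood of the origin has finite $S_p$-norm forward in time by Remark \ref{r2.8}, contradicting $\ns{u_C}{[0,T_+)}=+\infty$), since this is what guarantees that the limit $(w_0,w_1)$ is nonzero and is also used below.

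The real gaps are in your proof that $\ns{w}{[0,T_+(w))}=+\infty$, where both branches of the case split are argued incorrectly. In the case $t_n\lambda(t_n)\to L<\infty$, Remark \ref{r2.33} gives only the \emph{lower} semicontinuity $T_-(\bar w)\leq\liminf T_-(u_n)$; but $u_n$ can typically be continued backward past $s=-t_n\lambda(t_n)$ (its data there is a rescaled copy of $(u_{0,C},u_{1,C})$, a perfectly good element of $\hh$), so $T_-(u_n)>t_n\lambda(t_n)$ and the liminf is of no use. What actually forces $T_-(\bar w)\leq L$ is the observation that the data of $u_n$ at $s=-t_n\lambda(t_n)$, namely $\bigl(\lambda(t_n)^{-a}u_{0,C}(\cdot/\lambda(t_n)),\lambda(t_n)^{-(a+1)}u_{1,C}(\cdot/\lambda(t_n))\bigr)$, converges weakly to $(0,0)$ because $\lambda(t_n)\to 0$; if $T_-(\bar w)>L$ then, using Remarks \ref{r2.33} and \ref{r2.34} (equicontinuity), the strong limit $\bigl(\bar w(-L),\partial_s\bar w(-L)\bigr)$ would coincide with this weak limit, forcing $\bar w\equiv 0$ and contradicting $(w_0,w_1)\in\overline K$. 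In the case $t_n\lambda(t_n)\to\infty$, your claim that ``the divergence transfers to a fixed compact subinterval $[-M,0]$'' is false as stated: $\ns{u_n}{[-M,0]}\to\ns{\bar w}{[-M,0]}<\infty$ for each fixed $M$, and the fact that $\ns{u_n}{[-t_n\lambda(t_n),0]}\to\infty$ says nothing about any fixed $[-M,0]$. The correct argument is a contradiction via the perturbation theorem: if $T_-(\bar w)=\infty$ and $\ns{\bar w}{(-\infty,0]}<\infty$, then Theorem \ref{t2.16} applied with $\widetilde u=\bar w$, $e=0$, and data $u_n(0)\to\bar w(0)$ gives a uniform bound $\ns{u_n}{[-t_n\lambda(t_n),0]}\leq C$ for $n$ large, contradicting the divergence (and if $T_-(\bar w)<\infty$ one invokes Lemma \ref{l2.11} as you do in the other case).
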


The proof follows from the arguments in \cite{KM1}, page 670. See also Lemma 3.10 in \cite{KM3} for a similar proof.

\begin{lem}\label{l4.14}
Let $u_C$ be a critical element as in Propositions \ref{p4.6}, \ref{p4.7}. Assume that $T_+(u_{0,C},u_{1,C})<+\infty$. Then,
\begin{equation}\label{e4.15}
0<\frac{C(\overline K)}{T_+(u_{0,C},u_{1,C})-t}\leq\lambda(t).
\end{equation}
\end{lem}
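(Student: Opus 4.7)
My plan is to derive the bound as a direct self-similarity consequence of the compactness of $\overline K$ together with Remark \ref{r2.34}. The heuristic is that at times $t$ close to $T_+:=T_+(u_{0,C},u_{1,C})$, the solution $u_C$ has time of existence $T_+-t$ to go, and the rescaled profile $\vec v(\cdot,t)\in K$ (which lives in a compact set) provides, after undoing the scaling, a uniform lower bound on how long $u_C$ must continue to exist past time $t$. This forces $\lambda(t)$ to diverge at rate at least $1/(T_+-t)$.

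Concretely, first I would fix $t\in[0,T_+)$ and rescale by setting
\begin{equation*}
\tilde u_{t}(y,s)=\frac{1}{\lambda(t)^{a}}\,u_C\!\left(\frac{y}{\lambda(t)},\,t+\frac{s}{\lambda(t)}\right),\qquad a=\tfrac{2}{p-1}.
\end{equation*}
Since $a(p-1)=2$, a direct computation shows $\tilde u_t$ is again a solution of \eref{w+} on its natural time interval, and its Cauchy data at $s=0$ is
\begin{equation*}
\bigl(\tilde u_t(\cdot,0),\partial_s\tilde u_t(\cdot,0)\bigr)=\vec v(\cdot,t)\in K\subset \overline K.
\end{equation*}

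Next I would invoke Remark \ref{r2.34}: because $\overline K$ is compact in $\hh$, there exists $T_{+,\overline K}>0$, depending only on $\overline K$, such that every solution of \eref{w+} with initial data in $K$ is defined on $[0,T_{+,\overline K}]$ in the sense of Definition \ref{d2.10}. Hence $\tilde u_t$ exists on $[0,T_{+,\overline K}]$, and undoing the rescaling by the uniqueness stated after Definition \ref{d2.10} shows that $u_C$ is defined on $\bigl[t,\,t+T_{+,\overline K}/\lambda(t)\bigr]$. By the maximality of $T_+$ this forces
\begin{equation*}
t+\frac{T_{+,\overline K}}{\lambda(t)}\;\leq\;T_+(u_{0,C},u_{1,C}),
\end{equation*}
which, upon rearranging, yields \eref{e4.15} with $C(\overline K)=T_{+,\overline K}$.

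There is really no main obstacle: the entire content is the contractive combination of the compactness of $\overline K$ and the uniform lower bound on lifespan for Cauchy data in a compact set, which is precisely what Remark \ref{r2.34} provides. The only point requiring a moment of care is verifying that the rescaling exponent $a=2/(p-1)$ preserves solutions of \eref{w+} and that $\vec v(\cdot,t)$, as defined in Proposition \ref{p4.7}, is exactly the initial data of $\tilde u_t$ at $s=0$; both are immediate from the definitions.
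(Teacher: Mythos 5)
Your proposal is correct and uses the same underlying idea as the proof the paper invokes (the paper states the argument is identical to Lemma 4.7 of \cite{KM2}): the compactness of $\overline K$ yields, via the perturbation theory encapsulated in Remark \ref{r2.34}, a uniform lower bound $T_{+,\overline K}$ on the forward lifespan of solutions with data in $K$, and undoing the self-similar rescaling $\tilde u_t$ converts this into $\lambda(t)(T_+-t)\geq T_{+,\overline K}$. Your presentation is a clean, direct version of the contradiction-by-sequences argument that typically appears in this context; the only slight imprecision is that since $\tilde u_t$ exists on $[0,T_{+,\overline K}]$, one actually obtains the strict inequality $t+T_{+,\overline K}/\lambda(t)<T_+$, which of course still gives \eref{e4.15}.
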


The proof of Lemma \ref{l4.14} is identical to the one of Lemma 4.7 of \cite{KM2} and is omitted.

\begin{lem}\label{l4.16}
Let $u_C$ be as in Lemma \ref{l4.14}. After scaling, assume, without loss of generality, that $T_+(u_{0,C},u_{1,C})=1$. Then, for $0<t<1$, we have:
\begin{equation}\label{e4.17}
\supp\left(u_C(x,t),\partial_tu_C(x,t)\right)\subset B(0,1-t).
\end{equation}
\end{lem}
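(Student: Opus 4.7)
The plan is to combine two ingredients: the compactness of $\overline K$ in $\hh$, together with the lower bound $\lambda(t)\geq C(\overline K)/(1-t)$ from Lemma \ref{l4.14}, forces the $\hh$-mass of $(u_C(t),\partial_t u_C(t))$ to concentrate in balls about the origin whose radii tend to $0$ as $t\uparrow 1$; finite speed of propagation then transports this concentration backward from times $t$ close to $1$ down to the fixed time $t_0$, forcing the exterior of $B(0,1-t_0)$ to lie outside the support.

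First I would record the concentration estimate. Since $V(\cdot,t)=(\lambda(t)^{-a}u_C(\cdot/\lambda(t),t),\lambda(t)^{-a-1}\partial_t u_C(\cdot/\lambda(t),t))\in\overline K$ for every $t\in[0,1)$, Corollary \ref{c2.37} produces, for every $\eta>0$, some $M(\eta)>0$ with $\nhhx{\psi_{M(\eta)}V(\cdot,t)}\leq\eta$ uniformly in $t$. After the change of variables $y=x/\lambda(t)$, exploiting the scale-invariance of $\hh$, this becomes $\nhhx{\psi_{M(\eta)/\lambda(t)}(u_C(t),\partial_t u_C(t))}\leq\eta$, and since $\lambda(t)\geq C(\overline K)/(1-t)$, the set $\{\psi_{M(\eta)/\lambda(t)}\equiv 1\}$ contains $\{|x|\geq 2M(\eta)(1-t)/C(\overline K)\}$. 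For $\eta<\widetilde\delta_p$ (the small-data threshold of Remark \ref{r2.8}), I would let $\widetilde u_t$ denote the global solution of \eref{w+} with Cauchy data $(\psi_{M(\eta)/\lambda(t)}u_C(t),\psi_{M(\eta)/\lambda(t)}\partial_t u_C(t))$ at time $t$; Theorem \ref{t2.6} and Remark \ref{r2.8} then yield $\sup_s\nhh{\widetilde u_t(s)}{\partial_t\widetilde u_t(s)}\leq C\eta$. Because $\widetilde u_t$ and $u_C$ share the same Cauchy data at time $t$ on $\{|x|\geq 2M(\eta)(1-t)/C(\overline K)\}$, the finite speed of propagation recalled just before Lemma \ref{l2.35} (applied pointwise through the backward balls $B(y,t-t_0)$) gives $(\widetilde u_t(y,t_0),\partial_t\widetilde u_t(y,t_0))=(u_C(y,t_0),\partial_t u_C(y,t_0))$ for every $y$ with $|y|\geq 2M(\eta)(1-t)/C(\overline K)+(t-t_0)$.

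To conclude, fix $\delta>0$ and choose $t$ so close to $1$ that $2M(\eta)(1-t)/C(\overline K)+(t-t_0)<1-t_0+\delta/2$, which is possible since the left side tends to $1-t_0$ as $t\uparrow 1$. I would then fix a radial cutoff $\widetilde\psi\in C^\infty(\R^3)$ equal to $1$ on $\{|y|\geq 1-t_0+\delta\}$ and supported in $\{|y|\geq 1-t_0+\delta/2\}$; the support of $\widetilde\psi$ lies in the region on which $(u_C(t_0),\partial_t u_C(t_0))$ and $(\widetilde u_t(t_0),\partial_t\widetilde u_t(t_0))$ coincide, so $\widetilde\psi(u_C(t_0),\partial_t u_C(t_0))=\widetilde\psi(\widetilde u_t(t_0),\partial_t\widetilde u_t(t_0))$, and Lemma \ref{l2.35} (whose proof adapts to any smooth radial cutoff with bounded derivatives, in particular to $\widetilde\psi$) gives $\nhhx{\widetilde\psi(u_C(t_0),\partial_t u_C(t_0))}\leq C_\delta\eta$. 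Sending $\eta\to 0$ (with $M(\eta)$ and $t$ adjusted accordingly) forces this norm to vanish, so $(u_C(t_0),\partial_t u_C(t_0))$ is zero on $\{|y|\geq 1-t_0+\delta\}$. Since $\delta>0$ was arbitrary, \eref{e4.17} follows. The main technical obstacle is the careful handling of ``restriction to the exterior of a ball'' in the fractional space $\hh$, which is not a localized norm: all such restrictions must be realized through smooth cutoffs controlled by Lemma \ref{l2.35}, and one must send $\eta\to 0$ (with $M(\eta)$ and $t$ adjusted together) \emph{before} letting $\delta\downarrow 0$, since the small-data threshold governs the availability of $\widetilde u_t$.
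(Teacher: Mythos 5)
Your proof is correct, and it reaches the conclusion by a genuinely different route in the final step. Both arguments share the same skeleton: Corollary~\ref{c2.37} produces, after undoing the rescaling, a globally defined small solution whose Cauchy data at time $t$ agree with those of $u_C$ outside a ball of radius $\approx M(\eta)/\lambda(t)\lesssim M(\eta)(1-t)$, and finite speed of propagation transports this agreement backward to the fixed time $t_0$. The divergence is in how one extracts a usable ``vanishing on the exterior of a ball'' statement from this. The paper passes to the local space $L^q$ via the Sobolev embedding $\dot{\mathrm{H}}^{s_p-1}\subset L^q$, $\tfrac1q=\tfrac12-\tfrac13\bigl(\tfrac12-\tfrac2{p-1}\bigr)$, so that the smallness of the auxiliary solution localizes immediately to the region of agreement, and then scales and sends $\epsilon\to0$, $t\to1$; it also works at $t_0=0$ only and recovers general $t_0$ by a time-translation/rescaling afterward. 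You instead stay in $\hh$ by introducing a second cutoff $\widetilde\psi$ supported in the agreement region and invoking the multiplier bound of Lemma~\ref{l2.35} (extended, as you note, to an arbitrary fixed smooth radial cutoff with compactly supported gradient; the constant $C_\delta$ depends on $\delta$, but you correctly send $\eta\to0$ \emph{before} $\delta\to0$), and you argue directly at an arbitrary $t_0$, avoiding the final time-rescaling step. The paper's route is a bit cleaner, since passing to $L^q$ sidesteps the non-locality of the fractional norms entirely and needs only the unmodified Lemma~\ref{l2.35}; yours trades that for a second application of the cutoff lemma, which is slightly more work to justify but is more uniform in that it gives all times $t_0$ at once and doesn't rely on the particular Sobolev exponent. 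Both are valid proofs of \eref{e4.17}.
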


\begin{proof}
Consider $$(v_0,v_1)=\left(\scll{u_C}{\frac{2}{p-1}}{x},\scllb{u_C}{\frac{2}{p-1}}{x}\right)$$ and $v_{M(\epsilon)}$ as in Corollary \ref{c2.37}.
Let also $$v(\tau)=\scl{u_C}{\frac{2}{p-1}}{\lambda(t)}{x}{t+\frac{\tau}{\lambda(t)}},$$ which is a solution, for 
$0 \leq t+\frac{\tau}{\lambda(t)}<1$. Note that, by the finite speed of propagation (see the comment after Remark \ref{r2.34},
where we fix $t$ as the initial time and consider $\tau$ to be the time variable)
$$v(x,-t\lambda(t))=v_{M(\epsilon)}(x,-t\lambda(t)),\quad |x|\geq2M(\epsilon)+t\lambda(t).$$

Use now 
the Sobolev embedding $L^q \subset \dot{\mathrm{H}}^{s_p-1}$ where $\frac1q = \frac12 - \frac13(\frac12 - \frac2{p-1})$. Then, using \eref{e2.38},
\begin{multline*}
\int_{|x|\geq2M(\epsilon)+t\lambda(t)}\left[\frac{1}{\lambda(t)^{\frac{2}{p-1}+1}}\left|\nabla u_{0,C}\left(\frac{x}{\lambda(t)}\right)\right|\right]^q\\
+\int_{|x|\geq2M(\epsilon)+t\lambda(t)}
\left[\frac{1}{\lambda(t)^{\frac{2}{p-1}+1}}\left|u_{1,C}\left(\frac{x}{\lambda(t)}\right)\right|\right]^q
\leq C\epsilon.
\end{multline*}
After scaling, this becomes
\begin{displaymath}
\int_{|x|\geq\frac{2M(\epsilon)}{\lambda(t)}+t}\left[|\nabla u_{0,C}(x)|^q + |u_{1,C}(x)|^q\right]\;\leq C\epsilon.
\end{displaymath}
Since $\lambda(t)\to\infty$ as $t\to1$, by Lemma \ref{l4.14}, and $\epsilon>0$ is arbitrary, $u_{0,C}\equiv0$ for $|x|\geq1$, 
$u_{1,C}\equiv0$ for $|x|\geq1$. Scaling gives us the corresponding result for $u_C(x,t)$, $\partial_tu_C(x,t)$, $0<t<1$.
\end{proof}

\section{Rigidity Theorem, Part 1}

In the next two sections we conclude the proof of Theorem \ref{t4.1}, by establishing the following ``rigidity theorem'' for solutions of \eref{w+}.

\begin{thm}\label{t5.1}
Let $u$ be a solution of \eref{w+}, $u$ radial, $5< p<\infty$. Assume that $u$ has initial data $(u_0,u_1)\in\hh$. Assume also that there exists 
$\lambda(t)>0$, $t\in(0,\tpu)$, continuous, such that $\ns{u}{[0,\tpu)}=+\infty$ and
\begin{equation}\label{e5.2}
K=\left\{\vec v(x,t)=\left(\scll{u}{a}{x},\scllb{u}{a}{x}\right)\right\}
\end{equation}
has compact closure in $\hh$ and
$\lambda(t)\geq A_0>0$, for all $0<t<\tpu$.   Then, no such $u$ exists.
\end{thm}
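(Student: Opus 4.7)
The plan is to split the argument according to whether $\tpu<+\infty$ or $\tpu=+\infty$.

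\emph{Case 1: $\tpu<+\infty$ (self-similar type).} After rescaling I may assume $\tpu=1$. Lemma \ref{l4.14} then gives $\lambda(t)\gtrsim 1/(1-t)\to\infty$ as $t\uparrow 1$, and Lemma \ref{l4.16} yields $\supp(u(\cdot,t),\partial_t u(\cdot,t))\subset \bar B(0,1-t)$. Because $p>5$ gives $s_p>1$, the compact support together with $(u_0,u_1)\in\hh$ upgrades (via Sobolev embedding restricted to the support) to $(u_0,u_1)\in\dot H^1\times L^2$, so by Remark \ref{r2.7} the solution $u$ has finite, conserved classical energy
\[
E = \tfrac12\int(|\nabla u|^2 + |\partial_t u|^2)\,dx + \tfrac{1}{p+1}\int|u|^{p+1}\,dx .
\]
I would then use the shrinking support combined with the uniform bound $\nhh{u(t)}{\partial_t u(t)}\le C$ coming from compactness of $\overline K$, the Sobolev embedding $\dot H^{s_p}\hookrightarrow L^{3(p-1)/2}$ (valid for $p\ge 5$), and Hölder's inequality on a ball of radius $1-t$, to show that each of the three pieces of $E(t)$ tends to $0$ as $t\uparrow 1$. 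Hence $E\equiv 0$, so $u\equiv 0$, contradicting $\ns{u}{[0,\tpu)}=+\infty$.

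\emph{Case 2: $\tpu=+\infty$ (soliton-like type).} Here one must first upgrade the forward-in-time compactness hypothesis to full compactness on all of $\R$, using an extraction in the spirit of Lemma \ref{l4.13} together with the profile decomposition Theorem \ref{t4.9} (the minimality of the critical element and the orthogonality of profiles force the extracted solution to be non-dispersive in both time directions). Once full compactness is in place, Theorem \ref{t3.2} and Corollary \ref{c3.7} supply the pointwise and weighted $L^2$ decay
\[
|u(x,t)|\le \tfrac{C_0}{|x|},\qquad \Bigl(\textstyle\int_{|y|\ge|x|}|\nabla u|^2\Bigr)^{1/2}\le \tfrac{C_0}{|x|^{1/2}},
\]
uniformly in $t$. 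I would then test the equation against the natural $s_p$-adapted Pohozaev/virial multiplier $x\cdot\nabla u + \tfrac{2}{p-1}u$ and integrate by parts on $\R^3\times[0,T]$. The decay estimates are exactly what is needed to kill the boundary terms at $|x|=R$ as $R\to\infty$. In the defocusing case ($\mu=+1$) the resulting identity has the shape
\[
\Bigl[\textstyle\int \partial_t u\,\bigl(x\cdot\nabla u + \tfrac{2}{p-1}u\bigr)\,dx\Bigr]_0^T \;=\; c_p\int_0^T\!\!\int\bigl(|\nabla u|^2 + |u|^{p+1}\bigr)\,dx\,dt
\]
with $c_p>0$, the left-hand side being bounded uniformly in $T$ by compactness of $\overline K$ (after using $|u|\le C/|x|$ and Corollary \ref{c3.7} to control the spatial integrand). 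Letting $T\to\infty$ forces the right-hand side to vanish, so $u\equiv 0$, again contradicting $\ns{u}{[0,\infty)}=+\infty$.

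\emph{Main obstacle.} The serious step is Case 2. The reduction from forward compactness to full two-sided compactness is nontrivial but follows the Kenig--Merle pattern. The technically delicate point is the Pohozaev identity itself: the multiplier $x\cdot\nabla u$ forces boundary contributions of size $R^2\cdot|u(R)|\cdot|\nabla u(R)|$ on the sphere $|x|=R$, and it is precisely the scale-breaking decay $|u|\le C_0/|x|$ provided by Theorem \ref{t3.2} (together with the $L^2$ tail of $\nabla u$ with gain $|x|^{-1/2}$ from Corollary \ref{c3.7}) that makes these boundary terms vanish as $R\to\infty$. Any strictly weaker decay than $1/|x|$ would be insufficient, which is why Section 3 had to break the scaling.
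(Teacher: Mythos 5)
Your Case 1 is essentially the paper's Proposition \ref{p5.4}: use Lemma \ref{l4.14} and Lemma \ref{l4.16} to get $\lambda(t)\gtrsim(1-t)^{-1}$ and shrinking support, then energy conservation (Lemma \ref{l5.3}~i)), Hardy's inequality, and H\"older on balls of radius $1-t$ with $\nabla u,\partial_t u\in \dot H^{s_p-1}\subset L^q$, $q>2$, to force $E(u_0,u_1)=0$. The form of the Pohozaev/virial functional you propose in Case 2 is also close in spirit to the paper's $z(t)=\int u\partial_t u+\int x\cdot\nabla u\,\partial_t u$, and Theorem \ref{t3.2}/Corollary \ref{c3.7} are indeed exactly what controls $|z(t)|$ and the boundary terms. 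So the two endpoint propositions (\ref{p5.4}, \ref{p5.5}) are within reach by your route.

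The serious gap is in the intermediate reduction, which you compress into ``upgrade forward-in-time compactness to full compactness via an extraction, using minimality and profile decomposition.'' Two problems. First, Theorem \ref{t5.1} is a rigidity statement for an \emph{arbitrary} solution with the compactness property (it is applied, via Lemma \ref{l4.13}, to a solution $w$ that need not be the minimal-$A$ critical element), so minimality and Theorem \ref{t4.9} are unavailable at this stage; the reduction must be purely dynamical. Second, and more importantly, extracting a limit along $t_n\to\infty$ (Lemma \ref{l6.1}) does give a globally defined $v$ with the compactness property on all of $\R$, but it does \emph{not} give the uniform lower bound $\widetilde\lambda(\tau)\geq A_0'>0$ that Proposition \ref{p5.5} (and hence Theorem \ref{t3.2}, which you then want to invoke) requires. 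The whole content of Section~6 is the control of $\lambda(t)$ needed to manufacture such a sequence: Lemma \ref{l6.5} ($\lambda(t)\to\infty$), Lemma \ref{l6.6} (near-monotonicity $\lambda(t')\geq\lambda(t)/M_0$ for $t'\geq t$), the replacement $\lambda_1(t)=\min_{t_1\geq t}\lambda(t_1)$ in Remark \ref{r6.7}, and Lemma \ref{l6.8} (the dyadic bound $(t_{n+1}-t_n)\lambda_1(t_n)\leq C_0$ for $\lambda_1(t_n)=2^n$). Only after these does one reach the final contradiction, namely that $t_n\leq 2C_0$ while $t_n\uparrow\infty$; Proposition \ref{p5.5} is used as a \emph{tool} inside these lemmas, applied to limit solutions along sequences $t_n$ chosen so that $\widetilde\lambda$ \emph{is} bounded below (e.g.\ by taking $t_n'$ to minimize $\lambda$ on $[t_n,\infty)$). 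Without this machinery, your Case 2 does not close: one cannot simply apply the virial argument to the original forward-compact $u$, because the decay estimates of Theorem \ref{t3.2} require global-in-time compactness with $\lambda$ bounded below, which is precisely what is not given.
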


Note that in light of Proposition \ref{p4.7}, Remark \ref{r4.12}, Lemma \ref{l4.13}, 
Theorem \ref{t5.1} 
implies Theorem \ref{t4.1}. \\
In order to establish Theorem \ref{t5.1}, we need some well-known identities. See \cite{SS}, 2.3, for their proofs and Struwe's paper \cite{Sm} for the original work in which they were introduced.

\begin{lem}\label{l5.3}
Let $u$ be a solution of \eref{w+}, $\phi\in C_0^\infty(\R^3)$, radial, $\phi\equiv 1$ for $|x|\leq1$, $\supp\phi\subset B(0,2)$. Let $\phi_R(x)=\phi(x/R)$.
Then, for $0<t<\tpu$, the following identities hold:
\begin{itemize}
\item[i)] $\displaystyle\partila_t\int\phi_R\left(\frac{|\nabla u|^2}{2}+\frac{(\partila_tu)^2}{2}+\frac{1}{p+1}|u|^{p+1}\right)=
-\int\partial_tu\nabla\phi_R\nabla u$
\item[ii)] $\displaystyle\partial_t\int(u\partial_tu\phi_R)=\int(\partial_tu)^2\phi_R-\int|\nabla u|^2\phi_R -  \int|u|^{p+1}\phi_R-\int u\nabla u\nabla\phi_R.$
\item[iii)] \begin{multline*}
\partial_t\left(\int x\phi_R\nabla u\partial_tu\right)=-\frac{3}{2}\int\phi_R(\partial_tu)^2+\frac{3}{p+1}\int\phi_R|u|^{p+1}+\\+
\frac{1}{2}\int\phi_R|\nabla u|^2-\int x\cdot\nabla\phi_R(\partial_tu)^2+\int x\cdot\nabla\phi_R\frac{|u|^{p+1}}{p+1}-\\-
\int(\nabla\phi_R\cdot\nabla u)(x\cdot\nabla u)+\frac{1}{2}\int(x\cdot\nabla\phi_R)|\nabla u|^2.
\end{multline*}
\end{itemize}
\end{lem}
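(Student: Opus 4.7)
The plan is to derive all three identities by multiplying the equation $(\partial_t^2-\Delta)u=-|u|^{p-1}u$ by an appropriate localized multiplier and integrating by parts. The three multipliers are $\phi_R\partial_t u$ for (i) (localized energy), $\phi_R u$ for (ii) (localized virial), and $\phi_R\,x\cdot\nabla u$ for (iii) (localized Morawetz/scaling identity). Since $\phi_R$ is compactly supported, there are no boundary contributions at infinity, and all space integrations by parts produce only bulk terms or terms carrying $\nabla\phi_R$.

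For (i), the products $\phi_R\partial_t u\cdot\partial_t^2 u$ and $\phi_R\partial_t u\cdot|u|^{p-1}u$ are already exact time derivatives of $\tfrac12\phi_R(\partial_t u)^2$ and $\tfrac1{p+1}\phi_R|u|^{p+1}$, while $-\phi_R\partial_t u\,\Delta u$, integrated by parts in space, becomes $\partial_t\bigl(\tfrac12\phi_R|\nabla u|^2\bigr)+\partial_t u\,\nabla\phi_R\cdot\nabla u$; collecting gives (i). For (ii), I write $\phi_R u\,\partial_t^2 u=\partial_t(\phi_R u\,\partial_t u)-\phi_R(\partial_t u)^2$, and integrate $-\phi_R u\,\Delta u$ by parts in space to get $\phi_R|\nabla u|^2+u\,\nabla\phi_R\cdot\nabla u$. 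Combining with the cubic-in-$u$ term and rearranging gives (ii).

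Identity (iii) requires more careful bookkeeping. The time-derivative piece gives $\int\phi_R\,x\cdot\nabla u\,\partial_t^2 u=\partial_t\int\phi_R\,x\cdot\nabla u\,\partial_t u-\tfrac12\int\phi_R\,x\cdot\nabla(\partial_t u)^2$, and integrating the last integral by parts in space using $\nabla\cdot(\phi_R x)=3\phi_R+x\cdot\nabla\phi_R$ contributes $\tfrac32\int\phi_R(\partial_t u)^2+\tfrac12\int(x\cdot\nabla\phi_R)(\partial_t u)^2$. The nonlinear term, via $x\cdot\nabla u\cdot|u|^{p-1}u=\tfrac{1}{p+1}x\cdot\nabla|u|^{p+1}$, yields the analogous $\tfrac{3}{p+1}\int\phi_R|u|^{p+1}$ and $\tfrac{1}{p+1}\int(x\cdot\nabla\phi_R)|u|^{p+1}$ contributions. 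For the Laplacian term I use one integration by parts followed by the pointwise identity $\sum_{j,k}x_k(\partial_j u)(\partial_j\partial_k u)=\tfrac12\, x\cdot\nabla|\nabla u|^2$ and a second integration by parts, yielding
\begin{equation*}
-\int\phi_R\,x\cdot\nabla u\,\Delta u=\int(x\cdot\nabla u)(\nabla\phi_R\cdot\nabla u)-\tfrac12\int\phi_R|\nabla u|^2-\tfrac12\int(x\cdot\nabla\phi_R)|\nabla u|^2.
\end{equation*}
Assembling the three pieces gives the claimed formula for $\partial_t\int\phi_R\,x\cdot\nabla u\,\partial_t u$.

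The main obstacle is not the algebra but the justification of these manipulations at the natural regularity of Definition \ref{d2.10}, where $(u,\partial_t u)\in C(I;\hh)$ with $s_p<3/2$ rather than at the energy level. I would bypass this by approximating $(u_0,u_1)$ by data in $C_0^\infty\times C_0^\infty$, using Remark \ref{r2.7} to propagate higher regularity so that the approximate solutions are classical on any compact subinterval $I'\subset\subset I$, carrying out all the integrations by parts in the classical sense for the approximants, and then passing to the limit using the continuous dependence provided by Theorem \ref{t2.16} and Remark \ref{r2.33}, together with the local-in-space Sobolev controls that make $\int\phi_R|\nabla u|^2$, $\int\phi_R(\partial_t u)^2$ and $\int\phi_R|u|^{p+1}$ finite for $p\ge 5$ (since then $s_p\ge 1$).
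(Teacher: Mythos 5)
Your proposal is correct and follows essentially the same route as the paper: multiply the equation by the localized multipliers $\phi_R\partial_t u$, $\phi_R u$, $\phi_R\,x\cdot\nabla u$, integrate by parts classically for regular solutions obtained via Remark \ref{r2.7}, and pass to the limit using continuous dependence (the paper states exactly this, only without spelling out the integration-by-parts bookkeeping that you provide). One remark worth making: your computation for (iii) gives the term $-\tfrac12\int(x\cdot\nabla\phi_R)(\partial_t u)^2$, whereas the statement of the lemma carries coefficient $-1$; your coefficient is the correct one, the statement appears to contain a factor-of-two misprint on that single error term, and the discrepancy is immaterial for the applications (Proposition \ref{p5.5} sends $R\to\infty$, killing every $\nabla\phi_R$ term).
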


\begin{proof}
Note that, since $s_p>1$, by Sobolev embedding, $|\nabla u|,\partila_t u\in L^2_{loc}$ for $t\in I$. Note also that, by Lemma \ref{l3.4}, with 
$\beta=s_p$, $r^2|u|^{p-1}$ is bounded for $t\in I$, and hence, by the usual Hardy inequality $|u|\in L_{loc}^{p+1}$, 
for $t\in I$. One then approximates $u$ by regular solutions, using Remark \ref{r2.7} and then establishes i), ii), iii) by integration by parts. 
A passage to the limit yields Lemma \ref{l5.3}
\end{proof}

We next proceed to establish Theorem \ref{t5.1} in two special cases:  \\
- $\tpu<+\infty$,\\
- The function $\lambda(t)$ is defined for 
$$-\infty=-\tmu<t<\tpu=+\infty, \text{ with }\lambda(t)\geq A_0>0.$$ 
We will then see, in section 6, that, by a general argument, Theorem \ref{t5.1} 
follows from these  special cases.

\begin{pp}\label{p5.4}
There is no $u$ as in Theorem \ref{t5.1}, with $\tpu<+\infty$.
\end{pp}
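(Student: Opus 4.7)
The plan is to exploit ``compactness property + $\tpu<+\infty$'' to show that the classical (defocusing) energy of $u(t)$ vanishes as $t\uparrow\tpu$, and then invoke conservation of energy to force $u\equiv 0$, contradicting $\ns{u}{[0,\tpu)}=+\infty$.

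First, the proofs of Lemmas \ref{l4.14} and \ref{l4.16} use only the compactness of $\overline K$, continuity and lower bound of $\lambda$, and $\tpu<+\infty$, so they apply verbatim to the present $u$. After rescaling I may assume $\tpu=1$, so that $\lambda(t)\geq C/(1-t)\to\infty$ as $t\uparrow 1$, and
\[
\supp\bigl(u(\cdot,t),\partial_tu(\cdot,t)\bigr)\subset B(0,1-t),\qquad 0<t<1.
\]

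Next, I would establish the quantitative decay
\[
\nl{\nabla u(t)}{2}^{2}+\nl{\partial_t u(t)}{2}^{2}+\int|u(x,t)|^{p+1}\,dx\leq C\,(1-t)^{(p-5)/(p-1)}\xrightarrow[t\uparrow 1]{}0.
\]
Writing $\vec v(\cdot,t)=(v_1(t),v_2(t))\in\overline K$, one has $u(y,t)=\lambda^{a}v_1(\lambda y,t)$ and $\partial_tu(y,t)=\lambda^{a+1}v_2(\lambda y,t)$, so each of the three quantities above equals a power of $\lambda(t)$ times an intrinsic spatial norm of $(v_1,v_2)$. Here $v_1,v_2$ have bounded $\dot H^{s_p}$- and $\dot H^{s_p-1}$-norms and are supported in $B(0,\lambda(1-t))$. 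Sobolev embedding gives $v_1\in L^{3(p-1)/2}$ and (since $p>5$) $v_2\in L^{3(p-1)/(p+1)}\subset L^2_{\mathrm{loc}}$. H\"older on the support ball then yields $\nl{v_1}{2}\leq C(\lambda(1-t))^{s_p}$, $\nl{v_2}{2}\leq C(\lambda(1-t))^{(p-5)/(2(p-1))}$, and $\int|v_1|^{p+1}\leq C(\lambda(1-t))^{(p-5)/(p-1)}$. Interpolation $\nl{\nabla v_1}{2}\leq \nl{v_1}{2}^{1-1/s_p}\nh{v_1}^{1/s_p}$ gives $\nl{\nabla v_1}{2}\leq C(\lambda(1-t))^{s_p-1}$. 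A direct exponent computation shows that in all three terms the powers of $\lambda$ cancel exactly, leaving only the uniform factor $(1-t)^{(p-5)/(p-1)}$, which requires $p>5$ to be positive.

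Finally I would conclude by energy conservation. Since $s_p>1$ and the data has compact support, the estimates of the previous step applied at $t=0$ give $(u_0,u_1)\in\dot H^{1}\times L^{2}$ and $u_0\in L^{p+1}$, so
\[
E(u)(t)=\int\frac{|\nabla u|^{2}}{2}+\frac{(\partial_{t}u)^{2}}{2}+\frac{|u|^{p+1}}{p+1}\,dx
\]
is finite. Using Remark \ref{r2.7} (higher regularity for smoother approximating data), finite speed of propagation (to preserve compact support along the approximations), and Theorem \ref{t2.16} (continuity of the flow in $\hh$), one gets $E(u)(t)\equiv E(u_0)$ on $[0,1)$. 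Letting $t\uparrow 1$ and applying the decay just proved forces $E(u_0)=0$, hence $u_0\equiv 0$, $u_1\equiv 0$, and by uniqueness $u\equiv 0$, contradicting $\ns{u}{[0,\tpu)}=+\infty$. The main obstacle I expect is the rigorous justification of this last step: the solution's natural regularity is $\hh$ rather than $\dot H^{1}\times L^{2}$ at the critical scale, so the approximation has to be set up so as to preserve simultaneously the $\hh$-topology (needed for the Cauchy theory) and the compact support of the data (needed to control the approximating energies).
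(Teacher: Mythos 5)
Your proof is correct and follows the same outline as the paper's: Lemmas \ref{l4.14} and \ref{l4.16} give the compact support and blow-up of $\lambda$, energy conservation (established rigorously in Lemma \ref{l5.3}(i), so the approximation argument you worry about is already in place) makes $E$ constant, and Sobolev embedding plus H\"older on the shrinking support ball forces $E\to 0$. The only difference is minor bookkeeping in the last step: the paper controls $\int|u|^{p+1}$ via Hardy's inequality together with the pointwise bound $|x|^{a}|u|\leq C$ from Lemma \ref{l3.4} and then applies H\"older only to the kinetic terms $\nabla u,\partial_t u\in L^{q}$, $q>2$, while you compute all three energy pieces directly in the rescaled frame $(v_1,v_2)\in\overline K$; both routes yield the same rate $(1-t)^{(p-5)/(p-1)}$.
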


\begin{proof}
The proof  is in the spirit of the one of Case 1 in the proof of 
Proposition 5.3 in \cite{KM1}.  
We can assume (by scaling) that $\tpu=1$. By Lemma \ref{l4.14} and Lemma \ref{l4.16}, we have 
$$
\lambda(t)\geq\frac{C}{1-t},\quad 
\mbox{and} \quad \supp u,\partial_tu\subset B(0,1-t), \ \ 0<t<1.
$$ 
By compactness
$$\sup_{0<t<1}\nhh{u(t)}{\partial_tu(t)}\leq A,$$ by Lemma \ref{l5.3}i),
\begin{displaymath}
E(u(t),\partial_tu(t))= \int \frac{|\nabla u|^2}{2}+\frac{(\partila_tu)^2}{2}+\frac{1}{p+1}|u|^{p+1}
\end{displaymath}
is finite and constant in $t$, for $0\leq t<1$. But then,
\begin{multline*}
 \frac{1}{2}\int|\nabla u_0|^2+\frac{1}{2}\int u_1^2+\frac{1}{p+1}\int|u_0|^{p+1} \\
\leq C\int_{|x|\leq(1-t)}|\nabla u|^2+|\partial_tu(t)|^2+\frac{|u(t)|^2}{|x|^2}\\
\leq C\int_{|x|\leq(1-t)}\left\{|\nabla u|^2+|\partial_tu(t)|^2\right\}
\end{multline*}
by Hardy's inequality. But $\nabla u, \partial_t u\in\dot H^{s_p-1}\subset L^q$, $\frac{1}{q}=\frac{1}{2}-\frac{1}{3}\left(\frac{1}{2}-\frac{2}{p-1}\right)$,
with uniformly bounded norm in $t$. Since $q>2$, an application of H\"older's inequality shows (letting $t\to1$) that 
$E(u_0,u_1)=0$ and $(u_0,u_1)\equiv(0,0)$, contradicting $\tpu=1$.
\end{proof}

\begin{pp}\label{p5.5}
Assume that $u$ is a radial solution of \eref{w+} such that \\
$\tmu=+\infty$, $\tpu=+\infty$ and there exists $\lambda(t)\geq A_0>0$, for $-\infty<t<\infty$ so that
$$
K=\left\{\vec v(x,t)
=\left(\scll{u}{a}{x},\scllb{u}{a}{x}\right):
-\infty<t<+\infty \right\}
$$
has compact closure in $\hh$. Then $u\equiv0$.
\end{pp}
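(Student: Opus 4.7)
The plan is to prove that the conserved energy $E$ of $u$ vanishes, whence $u\equiv 0$. This is achieved by a Morawetz-type virial identity built from Lemma~\ref{l5.3}(ii) and (iii); the compactness of $\overline K$ and the lower bound $\lambda(t)\geq A_0>0$ ensure that the decay estimates of Theorem~\ref{t3.2} and Corollary~\ref{c3.7} hold uniformly in $t\in\R$, which is exactly what is needed to both pass to the limit $R\to\infty$ in the cutoff and to integrate over arbitrarily long time intervals $[0,T]$.

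First I would check that
\[
E(t):=\int\Bigl[\tfrac12|\nabla u|^2+\tfrac12(\partial_t u)^2+\tfrac1{p+1}|u|^{p+1}\Bigr]dx
\]
is finite and conserved. On $\{|x|\geq r_0\}$ the $L^2$ norms of $\nabla u$ and $\partial_t u$ as well as the $L^{p+1}$ norm of $u$ are controlled by Corollary~\ref{c3.7} and $|u|\leq C_0/|x|$; on $\{|x|\leq r_0\}$, the Sobolev embeddings $\dot H^{s_p}\hookrightarrow L^{3(p-1)/2}$ and $\dot H^{s_p-1}\hookrightarrow L^{q}$ for some $q>2$, together with the uniform bound on $\nhhx{(u(t),\partial_t u(t))}$ coming from compactness of $\overline K$, give local integrability. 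Applying Lemma~\ref{l5.3}(i) and sending $R\to\infty$, the flux term $\int\partial_t u\,\nabla\phi_R\,\nabla u$ is $O(R^{-(p-3/2)})$ uniformly in $t$ by Cauchy--Schwarz and Corollary~\ref{c3.7}, so the localized energy $E_R(t)$ converges to $E(t)$ uniformly and $E$ is independent of $t$.

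Now set $V_R(t):=\int u\,\partial_t u\,\phi_R\,dx+\int x\phi_R\,\nabla u\,\partial_t u\,dx$. A dyadic Cauchy--Schwarz argument on $\{|x|\geq r_0\}$ using Corollary~\ref{c3.7} and $|u|\leq C_0/|x|$, combined with local Sobolev bounds on $\{|x|\leq r_0\}$, yields $\sup_{R,t}|V_R(t)|\leq M<\infty$. Adding Lemma~\ref{l5.3}(ii) to (iii), the coefficients of $(\partial_t u)^2$, $|\nabla u|^2$, $|u|^{p+1}$ are $-1/2,\,-1/2,\,-(p-2)/(p+1)$; regrouping in terms of $E_R:=\int\phi_R[\tfrac12(\partial_t u)^2+\tfrac12|\nabla u|^2+\tfrac1{p+1}|u|^{p+1}]$ gives
\[
\partial_t V_R(t)=-E_R(t)-\frac{p-3}{p+1}\int\phi_R|u|^{p+1}\,dx+\mathrm{Err}_R(t),
\]
where $\mathrm{Err}_R$ collects all terms carrying $\nabla\phi_R$ (supported in $\{R\leq|x|\leq 2R\}$); by the decay estimates each of its summands is $O(R^{-\alpha})$ uniformly in $t$ for some $\alpha>0$, so $\sup_t|\mathrm{Err}_R(t)|\to 0$ as $R\to\infty$.

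Integrating on $[0,T]$ and letting $R\to\infty$ (dominated convergence, together with the uniform convergence $E_R\to E$),
\[
V_\infty(T)-V_\infty(0)=-E\,T-\frac{p-3}{p+1}\int_0^T\!\!\int|u|^{p+1}\,dx\,dt.
\]
For $p>5$ the coefficient $(p-3)/(p+1)$ is positive, so both terms on the right are non-positive; combined with $|V_\infty(T)-V_\infty(0)|\leq 2M$ this forces $ET\leq 2M$ for every $T>0$, hence $E=0$. In the defocusing case this immediately gives $\nabla u_0=0$, $u_1=0$ and $u_0=0$, so $u\equiv 0$. The central difficulty is the uniform-in-$t$ control of $\mathrm{Err}_R$ and the uniform-in-$R$ boundedness of $V_R$: both rest on the scale-breaking decay of Theorem~\ref{t3.2} and Corollary~\ref{c3.7}, which furnish precisely the quantitative decay at spatial infinity needed to eliminate the boundary terms while leaving the bulk identity intact.
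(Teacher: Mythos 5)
Your proof is correct and follows essentially the same route as the paper: the same virial functional $\int u\,\partial_t u + \int x\cdot\nabla u\,\partial_t u$, the same use of Lemma~\ref{l5.3}(i)--(iii) and of the decay estimates (Theorem~\ref{t3.2}, Corollary~\ref{c3.7}) to bound it uniformly and compute its derivative, and the same conclusion that energy conservation forces $E=0$. You carry the cutoff $\phi_R$ through explicitly while the paper absorbs the justification directly into Corollary~\ref{c3.7}, and a minor slip: the flux term in Lemma~\ref{l5.3}(i) is $O(R^{-(p-1)})$ rather than $O(R^{-(p-3/2)})$, though this does not affect the argument.
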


\begin{proof}
In light of our main result, Theorem \ref{t3.2}, if we define 
\begin{displaymath}
z(t)=\int u\partial_tu+\int x\nabla u\partila_tu,
\end{displaymath}
clearly, 
$z(t)$ is well defined and $|z(t)|\leq C$. Note that, in light of Corollary \ref{c3.7} 
and ii), iii) in 
Lemma \ref{l5.3}, we have
$$
z'(t)=-\frac{1}{2}\int(\partial_t u)^2-\frac{1}{2}\int|\nabla u|^2-\left(1-\frac{3}{p+1}\right)\int|u|^{p+1}.
$$

Note that $\frac{3}{p+1}<1$, that $\int(\partial_tu)^2+\int|\nabla u|^2+\int|u|^{p+1}<\infty$ for each $t$, because of Corollary \ref{c3.7} 
and that 
Lemma \ref{l5.3}i) implies that
$$
E(u(t),\partial_tu(t))\  = \ E((u_0,u_1))
$$
If then $E((u_0,u_1))=\frac{1}{2}\int|\nabla u_0|^2+\frac{1}{2}\int u_1^2+\frac{1}{p+1}\int|u_0|^{p+1}\neq0,$ we have 
\begin{displaymath}
z'(t)\leq-CE((u_0,u_1)).
\end{displaymath}
But then, $$z(0)-z(t)=-\int_0^tz'(s)\;ds\geq tCE((u_0,u_1)),$$ a contradiction for $t>0$, large, since $|z(t)-z(0)|\leq 2C$. This establishes
Proposition \ref{p5.5}.
\end{proof}

\section{Rigidity Theorem, Part 2}

In this section we will conclude the proof of Theorem \ref{t5.1}. Some of the arguments here are inspired by \cite{MM1,MM2}. The argument is general and 
proceeds in a number of steps.

\begin{lem}\label{l6.1}
Let $u$ be as in Theorem \ref{t5.1}, with $\tpu=+\infty$, $\lambda(t)$ continuous, $\lambda(t)\geq A_0>0$. Define 
\begin{displaymath}
\widetilde u(x,t)=\left(\scll{u}{a}{x},\scllb{u}{a}{x}\right),
\end{displaymath}
which is contained in $\overline K$. Let $\{t_n\}_{n=1}^\infty$ be any sequence, with $t_n\to+\infty$. After passing to a subsequence, 
$\widetilde u(x,t_n)\xrightarrow[n\to\infty]{}(v_0,v_1)\in\hh$. Let $v$ be the corresponding solution to \eref{w+}, with data $(v_0,v_1)$. Then, 
$T_+((v_0,v_1))=T_-((v_0,v_1))=+\infty$ and there exists $\widetilde\lambda(\tau)>0$ so that
\begin{displaymath}
\widetilde v(x,\tau)=\left(\scl{v}{a}{\widetilde\lambda(\tau)}{x}{\tau},\scl{\partial_\tau v}{a+1}{\widetilde\lambda(\tau)}{x}{\tau}\right)\in\overline K.
\end{displaymath}
\end{lem}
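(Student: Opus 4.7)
The idea is to realize $v$ as the strong $\hh$-limit of a sequence of rescalings of $u$ and to inherit the compactness property from $u$ via the scaling symmetry of \eref{w+}. Define $u_n(y, s) := \lambda(t_n)^{-a} u(y/\lambda(t_n),\, t_n + s/\lambda(t_n))$. By scaling invariance, $u_n$ is a solution of \eref{w+}, and $(u_n(\cdot, 0), \partial_s u_n(\cdot, 0)) = \widetilde u(\cdot, t_n) \to (v_0, v_1)$ in $\hh$ along the subsequence. Writing $\mathcal D_\sigma(f_0, f_1) := (\sigma^a f_0(\sigma \cdot), \sigma^{a+1} f_1(\sigma \cdot))$ for the $\hh$-isometric dilation, a direct change of variables yields the key identity
\[
(u_n(\cdot, s),\, \partial_s u_n(\cdot, s)) = \mathcal D_{\sigma_n(s)}\, \widetilde u(\cdot, \tau_n(s)), \qquad \tau_n(s) := t_n + s/\lambda(t_n),\ \ \sigma_n(s) := \lambda(\tau_n(s))/\lambda(t_n).
\]
In particular, $(u_n(\cdot, s), \partial_s u_n(\cdot, s))$ is a rescaling of an element of $K$ and hence has uniformly bounded $\hh$-norm.

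To prove $T_\pm((v_0, v_1)) = +\infty$, I apply Remark \ref{r2.34} to $\overline K$: this supplies uniform constants $T_0 > 0$ and $M$ such that $\ns{u_n}{[-T_0, T_0]} \leq M$ and $\{(u_n(\cdot, s), \partial_s u_n(\cdot, s)) : |s| \leq T_0\}$ lies in a compact subset of $\hh$. By Theorem \ref{t2.16} together with Remark \ref{r2.33}, $v$ exists on $[-T_0, T_0]$ and $u_n \to v$ in $C([-T_0, T_0]; \hh)$. Since $(u_n(\cdot, \pm T_0), \partial_s u_n(\cdot, \pm T_0))$ is again the $\mathcal D$-image of an element of $K$, the same argument extends the solution by another $T_0$, and iteration yields $v$ on all of $\R$.

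For the compactness property, fix $\tau \in \R$. Along a subsequence (depending on $\tau$), compactness of $\overline K$ gives $\widetilde u(\cdot, \tau_n(\tau)) \to (z_0, z_1) \in \overline K$ in $\hh$, and a further extraction gives $\sigma_n(\tau) \to \widetilde\lambda(\tau) \in [0, +\infty]$. If $\widetilde\lambda(\tau) \in (0, +\infty)$, continuity of $\sigma \mapsto \mathcal D_\sigma$ together with $u_n(\cdot, \tau) \to v(\cdot, \tau)$ yields in the limit $(v(\cdot, \tau), \partial_\tau v(\cdot, \tau)) = \mathcal D_{\widetilde\lambda(\tau)}(z_0, z_1)$, equivalently $\widetilde v(\cdot, \tau) = (z_0, z_1) \in \overline K$. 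A diagonal extraction over a countable dense set of $\tau$, combined with the equicontinuity from Remark \ref{r2.34}, then defines $\widetilde\lambda$ on all of $\R$.

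\textbf{Main obstacle.} The delicate point is to rule out the degenerate limits $\widetilde\lambda(\tau) \in \{0, +\infty\}$. The lower bound $\lambda \geq A_0$ gives $A_0/\lambda(t_n) \leq \sigma_n(\tau) \leq \lambda(\tau_n(\tau))/A_0$, so it is enough to bound $\lambda(t_n)$ and $\lambda(\tau_n(\tau))$ uniformly from above along the subsequence. If $\lambda(t_n) \to +\infty$, the explicit formula $\widetilde u(x, t_n) = \lambda(t_n)^{-a} u(x/\lambda(t_n), t_n)$ combined with the pointwise bound from Theorem \ref{t3.2} forces $\widetilde u(\cdot, t_n) \to 0$ pointwise; hence $(v_0, v_1) = 0$, so $v \equiv 0$ and the conclusion of the lemma is trivial with any positive $\widetilde\lambda$. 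The case $\lambda(\tau_n(\tau)) \to +\infty$ is analogous. Otherwise bounded subsequences are available and $\widetilde\lambda(\tau) \in (0, +\infty)$, completing the proof.
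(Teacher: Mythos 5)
The overall strategy---writing $v$ as a strong limit of rescaled time translates $u_n$ of $u$, inheriting compactness from $\overline K$, and controlling the scaling factors---is the same as in the paper. However, the resolution of your ``main obstacle'' is wrong, and since that step is the crux of the lemma there is a genuine gap.

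First, Theorem \ref{t3.2} requires $u$ to be globally defined with the compactness property on all of $(-\infty,+\infty)$; in Lemma \ref{l6.1} one only knows $\tpu=+\infty$ and compactness for $t\geq 0$, with no hypothesis for $t<0$. So Theorem \ref{t3.2} cannot be applied to $u$ here. Second, even if it applied, the bound $|u(y,t)|\leq C_0/|y|$ gives $|\widetilde u(x,t_n)|\leq C_0\lambda(t_n)^{1-a}/|x|$, and $1-a>0$, so the right-hand side \emph{diverges} (rather than vanishes) as $\lambda(t_n)\to\infty$. Third, and most fundamentally, the alternative you are trying to rule out cannot occur: $\widetilde u(\cdot,t_n)\in K$ and $(0,0)\notin\overline K$ (by Theorem \ref{t2.16} together with $\ns{u}{[0,\infty)}=+\infty$), so the strong limit $(v_0,v_1)$ is \emph{never} $(0,0)$. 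In fact Lemma \ref{l6.5}, itself a consequence of Lemma \ref{l6.1}, shows $\lambda(t)\to+\infty$ always, yet $(v_0,v_1)\neq(0,0)$; the implication ``$\lambda(t_n)\to\infty$ forces $v\equiv 0$'' is simply false. The quantity that needs to be bounded is not $\lambda(t_n)$ itself but the \emph{ratio} $\sigma_n(\tau)=\lambda(\tau_n(\tau))/\lambda(t_n)$, and the mechanism that bounds it is compactness, not decay: with your isometry $\mathcal D_{\sigma}$, one has $(u_n(\cdot,\tau),\partial_s u_n(\cdot,\tau))=\mathcal D_{\sigma_n(\tau)}\widetilde u(\cdot,\tau_n(\tau))$, the left side converges strongly to $(v(\cdot,\tau),\partial_\tau v(\cdot,\tau))$ by Remark \ref{r2.33}, and $\widetilde u(\cdot,\tau_n(\tau))\to(w_0,w_1)\in\overline K$ after extraction; since $\mathcal D$ is isometric this forces $\mathcal D_{\sigma_n(\tau)}(w_0,w_1)\to(v(\tau),\partial_\tau v(\tau))$ strongly with $(v(\tau),\partial_\tau v(\tau))\neq(0,0)$, whereas if $\sigma_n(\tau)\to 0$ or $\sigma_n(\tau)\to\infty$ then $\mathcal D_{\sigma_n(\tau)}(w_0,w_1)$ would converge weakly to $0$, a contradiction. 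This is exactly the argument in the paper (following page 671 of \cite{KM1}) and uses only $(0,0)\notin\overline K$.

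Your iteration argument for $T_{\pm}((v_0,v_1))=+\infty$ has the same unaddressed issue: once you step past $[-T_0,T_0]$, the data $(u_n(\cdot,\pm T_0),\partial_s u_n(\cdot,\pm T_0))$ lie in $\mathcal D_{\sigma_n(\pm T_0)}(K)$, which is not a priori precompact unless $\sigma_n(\pm T_0)$ is bounded above and below---precisely the control you have not yet established. The paper avoids the iteration entirely: it first establishes the compactness property of $v$ on its maximal interval of existence as above, and then concludes $T_{\pm}((v_0,v_1))=+\infty$ by invoking Proposition \ref{p5.4} (with Remark \ref{r4.12}, Lemma \ref{l4.14}, Lemma \ref{l4.16}) to exclude a finite endpoint for a nonzero compact solution.
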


\begin{proof}
Note first that $(0,0)\not\in\overline K$, because of Theorem \ref{t2.16}, and the fact that $\ns{u}{[0,+\infty)}=+\infty$. Note also that, for each $\tau\in\R$, 
$t_n+\tau/\lambda(t_n)\geq0$, for $n$ large, since $t_n\lambda(t_n)\geq A_0t_n\to+\infty$. We will first show that for 
$\tau\in(-T_-((v_0,v_1)), T_+((v_0,v_1)))$, we can find $\widetilde \lambda(\tau)>0$ so that $\widetilde v(\tau)\in\overline K$. Indeed 
by uniqueness in \eref{w+} and Remark \ref{r2.33}, we have that, for $\tau\in(-T_-((v_0,v_1)),$ $T_+((v_0,v_1)))$,

\begin{multline*}
\left(\scl{u}{a}{\lambda(t_n)}{x}{t_n+\frac{\tau}{\lambda(t_n)}},\scl{\partial_tu}{a+1}{\lambda(t_n)}{x}{t_n+\frac{\tau}{\lambda(t_n)}}\right)\\
\to(v(x,\tau),\partial_\tau v(x,\tau))
\end{multline*}

in $\hh$. Also, since $t_n+\tau/\lambda(t_n)\geq0$ for $n$ large

\begin{multline*}
\Bigg(\scl{u}{a}{\lambda(t_n+\tau/\lambda(t_n))}{x}{t_n+\frac{\tau}{\lambda(t_n)}},\\
\scl{\partila_tu}{a+1}{\lambda(t_n+\tau/\lambda(t_n))}{x}{t_n+\frac{\tau}{\lambda(t_n)}}\Bigg)\\
\to(w_0(\tau),w_1(\tau))\in\overline K,
\end{multline*}

after taking a further subsequence. But then, it is easy to see that 

\begin{multline*}
\Bigg(\left(\frac{\lambda(t_n)}{\lambda(t_n+\tau/\lambda(t_n))}\right)^av\left(\frac{x\lambda(t_n)}{\lambda(t_n+\tau/\lambda(t_n))},\tau\right),\\
\left(\frac{\lambda(t_n)}{\lambda(t_n+\tau/\lambda(t_n))}\right)^{a+1}\partila_\tau v\left(\frac{x\lambda(t_n)}{\lambda(t_n+\tau/\lambda(t_n))},\tau\right)\Bigg)\\\to(w_0(\tau),w_1(\tau))\in\overline K.
\end{multline*}

Hence, since $(w_0(\tau),w_1(\tau))\in\overline K$, so that $(w_0(\tau),w_1(\tau))\neq(0,0)$, we have, for some $M(\tau)$ positive

\begin{equation}\label{e6.2}
\frac{1}{M(\tau)}\leq\frac{\lambda(t_n)}{\lambda(t_n+\tau/\lambda(t_n))}\leq M(\tau).
\end{equation}

(See page 671 of \cite{KM1} for a similar argument). Taking a subsequence, we can assume that

\begin{displaymath}
\frac{1}{\widetilde\lambda(\tau)}=\lim_{n\to\infty}\frac{\lambda(t_n)}{\lambda(t_n+\tau/\lambda(t_n))},
\end{displaymath}

and hence

\begin{displaymath}
\left(\scl{v}{a}{\widetilde\lambda(\tau)}{x}{\tau},\scl{\partial_\tau v}{a+1}{\widetilde\lambda(\tau)}{x}{\tau}\right)\in\overline K.
\end{displaymath}

Finally, by Proposition \ref{p5.4}, Remark \ref{r4.12}, Lemma \ref{l4.14} and Lemma \ref{l4.16},\\ $\tpu=+\infty$, $\tmu=+\infty$.
\end{proof}

\begin{rem}\label{r6.3}
The proof of \eref{e6.2} above also shows that if $\tau_n\to\tau_0$, then
\begin{equation}\label{e6.4}
\frac{1}{M(\tau_0)}\leq\frac{\lambda(t_n)}{\lambda(t_n+\tau_n/\lambda(t_n))}\leq M(\tau_0),
\end{equation}
after taking a subsequence. This is because if we let
\begin{displaymath}
(v_{0,n},v_{1,n})=\left(\scl{u}{a}{\lambda(t_n)}{x}{t_n},\scl{\partial_tu}{a+1}{\lambda(t_n)}{x}{t_n}\right),
\end{displaymath}
$(v_{0,n},v_{1,n})\to(v_0,v_1)$ in $\hh$. If $v_n$ is the corresponding solution to \eref{w+}, since $t_n+\tau_n/\lambda(t_n)\geq0$ for $n$ large,
because of the continuity of the solution map given in Theorem \ref{t2.16},
\begin{displaymath}
(v_n(x,\tau_n),\partila_\tau v_n(x,\tau_n))\to (v(x,\tau_0),\partila_\tau v(x,\tau_0))\quad\text{in }\hh
\end{displaymath}
and
\begin{multline*}
(v_n(x,\tau_n),\partila_\tau v_n(x,\tau_n))\\=
\left(\scl{u}{a}{\lambda(t_n)}{x}{t_n+\frac{\tau_n}{\lambda(t_n)}},\scl{\partial_tu}{a+1}{\lambda(t_n)}{x}{t_n+\frac{\tau_n}{\lambda(t_n)}}\right).
\end{multline*}
Also
\begin{multline*}
\Bigg(\scl{u}{a}{\lambda(t_n+\tau_n/\lambda(t_n))}{x}{t_n+\frac{\tau_n}{\lambda(t_n)}},\\
\scl{\partial_tu}{a+1}{\lambda(t_n+\tau_n/\lambda(t_n))}{x}{t_n+\frac{\tau_n}{\lambda(t_n)}}\Bigg)\\
=\Bigg(\left[\frac{\lambda(t_n)}{\lambda(t_n+\tau/\lambda(t_n))}\right]^av_n\left(\frac{x\lambda(t_n)}{\lambda(t_n+\tau/\lambda(t_n))},\tau_n\right),\\
\left[\frac{\lambda(t_n)}{\lambda(t_n+\tau/\lambda(t_n))}\right]^{a+1}\partila_\tau v_n\left(\frac{x\lambda(t_n)}{\lambda(t_n+\tau/\lambda(t_n))},\tau_n\right)\Bigg)\\ \to(w_0(\tau_0),w_1(\tau_0))\in\overline K,
\end{multline*}
so that $(w_0(\tau_0),w_1(\tau_0))\neq(0,0)$. It is then easy to see that
\begin{multline*}
\Bigg(\left[\frac{\lambda(t_n)}{\lambda(t_n+\tau/\lambda(t_n))}\right]^av\left(\frac{x\lambda(t_n)}{\lambda(t_n+\tau/\lambda(t_n))},\tau_0\right),\\
\left[\frac{\lambda(t_n)}{\lambda(t_n+\tau/\lambda(t_n))}\right]^{a+1}\partila_\tau v\left(\frac{x\lambda(t_n)}{\lambda(t_n+\tau/\lambda(t_n))},\tau_0\right)\Bigg)\\ \to(w_0(\tau_0),w_1(\tau_0)),
\end{multline*}
which, since $(w_0(\tau_0),w_1(\tau_0))\neq(0,0)$, gives \eref{e6.4}.
\end{rem}

\begin{lem}\label{l6.5}
Let $u$, $\lambda$ be as in Theorem \ref{t5.1}, $\tpu=+\infty$, $\lambda(t)\geq A_0>0$. Then,
\begin{equation}\label{e6.6}
\lim_{t\uparrow+\infty}\lambda(t)=+\infty.
\end{equation}
\end{lem}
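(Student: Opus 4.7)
The plan is to argue by contradiction: if $\lambda(t)$ fails to tend to $+\infty$, I will manufacture, via compactness, a second solution $v$ to \eref{w+} that is globally defined, nontrivial, and satisfies the hypothesis of Proposition \ref{p5.4}... I mean Proposition \ref{p5.5} (so has a two-sided uniformly positive $\widetilde\lambda$), and thereby derive a contradiction with $v\equiv 0$.

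Concretely, suppose \eref{e6.6} fails. Then there exist $M<\infty$ and a sequence $t_n\uparrow+\infty$ with $A_0\leq\lambda(t_n)\leq M$, and after passing to a subsequence I may assume $\lambda(t_n)\to\lambda_0\in[A_0,M]$. Since the elements $\widetilde u(\cdot,t_n)$ lie in $K$, whose closure is compact in $\hh$, a further subsequence yields $\widetilde u(\cdot,t_n)\to(v_0,v_1)$ in $\hh$. Crucially $(v_0,v_1)\neq(0,0)$: because $\ns{u}{[0,+\infty)}=+\infty$, the perturbation result of Theorem \ref{t2.16} (applied exactly as at the start of the proof of Lemma \ref{l6.1}) forbids $(0,0)\in\overline K$.

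Let $v$ be the solution of \eref{w+} with data $(v_0,v_1)$. By Lemma \ref{l6.1}, $T_\pm((v_0,v_1))=+\infty$, and for each $\tau\in\R$ there exists $\widetilde\lambda(\tau)>0$ with $\widetilde v(\cdot,\tau)\in\overline K$, given (along a $\tau$-dependent subsequence) by
\begin{equation*}
\frac{1}{\widetilde\lambda(\tau)}=\lim_{n\to\infty}\frac{\lambda(t_n)}{\lambda(t_n+\tau/\lambda(t_n))}.
\end{equation*}
For $n$ large, $t_n+\tau/\lambda(t_n)\geq 0$ (since $t_n\lambda(t_n)\geq A_0 t_n\to+\infty$), so $\lambda(t_n+\tau/\lambda(t_n))\geq A_0$, while $\lambda(t_n)\leq M$. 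Therefore
\begin{equation*}
\widetilde\lambda(\tau)=\lim_{n\to\infty}\frac{\lambda(t_n+\tau/\lambda(t_n))}{\lambda(t_n)}\geq\frac{A_0}{\lambda_0}>0\quad\text{for every }\tau\in\R.
\end{equation*}
Thus $v$ is a radial global-in-time solution whose associated orbit has compact closure in $\hh$ (contained in $\overline K$) and whose scaling function is bounded below by $A_0/\lambda_0$ on all of $\R$. Proposition \ref{p5.5} applies and forces $v\equiv 0$, contradicting $(v_0,v_1)\neq(0,0)$.

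The only delicate point is the uniform lower bound on $\widetilde\lambda$; everything else is a direct assembly of Lemma \ref{l6.1}, the compactness of $\overline K$, and Proposition \ref{p5.5}. The lower bound works precisely because I have the one-sided lower bound $\lambda\geq A_0$ together with the (hypothetical) upper bound $\lambda(t_n)\leq M$, so the quotient $\lambda(t_n+\tau/\lambda(t_n))/\lambda(t_n)$ is bounded below uniformly in $\tau$; this is the step that would fail without the contradiction hypothesis.
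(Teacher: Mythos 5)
Your argument is correct and follows the paper's own proof essentially verbatim: extract a subsequence with $\lambda(t_n)\to\lambda_0<\infty$, apply Lemma \ref{l6.1} to obtain a nontrivial limit solution $v$ with $\widetilde v(\tau)\in\overline K$, observe that the (hypothetical) upper bound on $\lambda(t_n)$ together with $\lambda\geq A_0$ forces $\widetilde\lambda(\tau)\geq A_0/\lambda_0>0$ for all $\tau$, and conclude from Proposition \ref{p5.5} that $v\equiv 0$, contradicting $(0,0)\notin\overline K$.
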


\begin{proof}
If not, there exists $\{t_n\}_{n=1}^\infty$, $t_n\uparrow+\infty$ and $A_0\leq\lambda_0<+\infty$, so that $\lambda(t_n)\to\lambda_0$. Apply now Lemma 
\ref{l6.1}, so that, after passing to a subsequence $\widetilde u(x,t_n)\to(v_0,v_1)\in\hh$, $v$ is defined on $\R$ and
\begin{displaymath}
\widetilde v(x,\tau)=\left(\scl{v}{a}{\widetilde\lambda(\tau)}{x}{\tau},\scl{\partial_\tau v}{a+1}{\widetilde\lambda(\tau)}{x}{\tau}\right)\in\overline K.
\end{displaymath}
Moreover, by the proof of Lemma \ref{l6.1}, for each $\tau$, (after passing to a subsequence),
\begin{displaymath}
\frac{1}{\widetilde\lambda(\tau)}=\lim_{n\to\infty}\frac{\lambda(t_n)}{\lambda(t_n+\tau/\lambda(t_n))}\leq\frac{\lambda_0}{A_0},
\end{displaymath}
or $\widetilde\lambda(\tau)\geq\frac{A_0}{\lambda_0}>0$. This, however, contradicts Proposition \ref{p5.5}, since $\widetilde v\in\overline K$, so that 
$\widetilde v\neq0$.
\end{proof}

\begin{lem}\label{l6.6}
Let $u$, $\lambda$ be as in Theorem \ref{t5.1} ($\lambda$ continuous, $\lambda(t)\geq A_0>0$), $\tpu=+\infty$. Then, there exists $M_0>0$ so that
\begin{equation}\label{e6.7}
\text{for all }t'\geq t,\;\lambda(t')\geq\frac{1}{M_0}\lambda(t),\;t>0.
\end{equation}
\end{lem}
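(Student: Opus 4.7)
I argue by contradiction. If the conclusion fails, then for every $n$ there are $0 < t_n \leq s_n$ with $\mu_n := \lambda(s_n)/\lambda(t_n) \to 0$. Since $\lambda$ is continuous and bounded below by $A_0$, if $t_n$ remained bounded then $\lambda(t_n)$ would be bounded along a subsequence, contradicting $\mu_n\to 0$ together with $\lambda(s_n)\geq A_0$. Hence $t_n \to \infty$, so $s_n \to \infty$, and by Lemma \ref{l6.5} both $\lambda(t_n),\lambda(s_n) \to \infty$.

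Using the compactness of $\overline K$ (and the fact that $0 \notin \overline K$ by Theorem \ref{t2.16} combined with $\ns{u}{[0,+\infty)} = +\infty$), pass to a subsequence so that $\widetilde u(\cdot, t_n) \to (v_0,v_1)$ and $\widetilde u(\cdot, s_n) \to (w_0, w_1)$ in $\hh$, with both limits in $\overline K \setminus \{0\}$. Let $v,w$ be the corresponding solutions to \eref{w+}; by Lemma \ref{l6.1} both are defined for all $\tau \in \R$ with compactness orbit in $\overline K$. Set the rescaled time-differences
\[
\tau_n := (s_n - t_n)\lambda(t_n)\ \geq\ 0,\qquad \sigma_n := (s_n - t_n)\lambda(s_n) = \mu_n\tau_n.
\]

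\textbf{Case (a): $\tau_n$ has a bounded subsequence.} After subsequence $\tau_n\to\tau_*\in[0,\infty)$. Remark \ref{r6.3} applied along the sequence $t_n+\tau_n/\lambda(t_n) = s_n$ gives
\[
\tfrac{1}{\mu_n} = \tfrac{\lambda(t_n)}{\lambda(t_n + \tau_n/\lambda(t_n))} \leq M(\tau_*),
\]
contradicting $1/\mu_n \to \infty$.

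\textbf{Case (b): $\tau_n \to \infty$.} Introduce the rescaled solutions at $s_n$, $w_n(x,\sigma) := \lambda(s_n)^{-a} u(x/\lambda(s_n), s_n + \sigma/\lambda(s_n))$. By Remark \ref{r2.33}, $w_n \to w$ in $C_{\rm loc}(\R;\hh)$. A direct change of variables gives the key identity $w_n(x, -\sigma_n) = \mu_n^{-a}\,\widetilde u(\mu_n^{-1}x, t_n)$, with $\mu_n^{-1}\to\infty$. Since $\widetilde u(\cdot, t_n) \to (v_0,v_1) \neq 0$ in $\hh$, this ``zoom-in'' preserves the $\hh$-norm (so $\|w_n(\cdot, -\sigma_n)\|_{\hh} \to \|(v_0,v_1)\|_{\hh} > 0$) but concentrates every bit of mass at the origin: for each fixed $R > 0$, $\|w_n(\cdot,-\sigma_n)\|_{\hh(|x|\geq R)} \to 0$. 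Now if $\sigma_n$ has a bounded subsequence $\sigma_n\to\sigma_*$, the local convergence $w_n\to w$ at time $-\sigma_*$ forces $\|w(\cdot,-\sigma_*)\|_{\hh(|x|\geq R)} = 0$ for every $R > 0$, hence $w(\cdot,-\sigma_*)\equiv 0$, contradicting the positive-norm statement above. If instead $\sigma_n\to\infty$, neither profile is directly evaluable at the required scaled time; the plan is to insert an intermediate time $r_n \in [t_n,s_n]$ with $\lambda(r_n) = \sqrt{\lambda(t_n)\lambda(s_n)}$ (existing by continuity and the intermediate value theorem). Then $\lambda(r_n)/\lambda(t_n) = \lambda(s_n)/\lambda(r_n) = \mu_n^{1/2}\to 0$, so both pairs $(t_n, r_n)$ and $(r_n, s_n)$ are ``bad'', but the new rescaled time-differences are smaller by factors depending on $\mu_n^{1/2}$. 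Finitely many such bisections eventually produce a bad pair falling into Case (a) or the $\sigma$-bounded part of Case (b), closing the argument.

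\textbf{Main obstacle.} The delicate point is the subcase $\sigma_n\to\infty$ inside Case (b): both rescaled time-differences diverge, so the natural $C_{\rm loc}$ convergence of $v_n \to v$ and $w_n \to w$ does not directly pin down either profile at the relevant times. The bisection/intermediate-time reduction is what I expect to carry the weight here, exploiting the multiplicative structure of scaling to produce a bounded-time configuration after finitely many steps.
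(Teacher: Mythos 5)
Your Case (a) and the bounded-$\sigma_n$ part of Case (b) are both sound, but the subcase $\sigma_n\to\infty$ in Case (b) is a genuine gap, and the bisection heuristic you propose does not close it. The problem is structural: bisecting $[t_n,s_n]$ at an $r_n$ where $\lambda(r_n)=\sqrt{\lambda(t_n)\lambda(s_n)}$ gives no control on the location of $r_n$ in time, so neither of the two resulting rescaled time-differences $(r_n-t_n)\lambda(r_n)$ nor $(s_n-r_n)\lambda(s_n)$ need be smaller in any useful sense. The number of bisections needed would in general depend on $n$ in an unbounded way, and the diagonal extraction required to pass to the limit would not obviously yield a nondegenerate limiting profile. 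So the plan, as written, does not reduce to the cases you can handle.

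The paper's proof avoids this difficulty entirely by a normalization that you are missing. Starting from $t_n'>t_n$ with $\lambda(t_n)/\lambda(t_n')\to\infty$, they use $\lim_{t\to\infty}\lambda(t)=+\infty$ (Lemma \ref{l6.5}) and continuity to replace $t_n'$ by the point where $\lambda$ attains its minimum on $[t_n,\infty)$; the ratio $\lambda(t_n)/\lambda(t_n')$ only increases, so the contradiction hypothesis is preserved. They then pass to the limit profile $v$ centered at $t_n'$ (your $s_n$), and show, exactly as in your Case (a) via Remark \ref{r6.3}, that $(t_n'-t_n)\lambda(t_n')\to+\infty$. This is the stage at which you lose control, but the minimality of $\lambda(t_n')$ over $[t_n,\infty)$ now does all the work: for every fixed $\tau$ and $n$ large, $t_n'+\tau/\lambda(t_n')\geq t_n$, hence $\lambda(t_n'+\tau/\lambda(t_n'))\geq\lambda(t_n')$, hence the limiting scaling parameter satisfies $\widetilde\lambda(\tau)\geq 1$ for all $\tau\in\R$. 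Proposition \ref{p5.5} then forces $v\equiv 0$, contradicting $(0,0)\notin\overline K$. In short, the key idea you need is to recenter the argument at an argmin of $\lambda$ on a forward half-line, rather than at an arbitrary later time; this replaces your intractable $\sigma_n\to\infty$ analysis with a one-line lower bound $\widetilde\lambda\geq 1$ that feeds directly into Proposition \ref{p5.5}.
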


\begin{proof}
If not, we can find $t_n'>t_n>0$, so that $\frac{\lambda(t_n)}{\lambda(t_n')}\to+\infty$. Since $\lambda\geq A_0$, $\lambda$ is continuous, 
$t_n\to+\infty$. Because of \eref{e6.6}, one can assume, possibly taking a subsequence and changing $t_n'$, that 
$\lambda(t_n')=\min_{t\geq t_n}\lambda(t)$. Consider $(v_0,v_1)$ as in Lemma \ref{l6.1}, so that
\begin{displaymath}
\Bigg(\scl{u}{a}{\lambda(t_n')}{x}{t_n'},\scl{\partial_tu}{a+1}{\lambda(t_n')}{x}{t_n'}\Bigg)\to(v_0,v_1),
\end{displaymath}
$v$ is defined for $\tau\in\R$, $\widetilde v(\tau)\in\overline K$ and (for a subsequence)
\begin{displaymath}
\frac{1}{\widetilde\lambda(\tau)}=\lim_{n\to\infty}\frac{\lambda(t_n')}{\lambda(t_n'+\tau/\lambda(t_n'))}.
\end{displaymath}
We now claim that $(t_n'-t_n)\lambda(t_n')\to+\infty$. If not, $-\tau_n=(t_n'-t_n)\lambda(t_n')\to-\tau_0$ (after taking a further subsequence) and hence, from Remark \ref{r6.3}
\begin{displaymath}
\frac{1}{M(\tau_0)}\leq\frac{\lambda(t_n')}{\lambda(t_n'+\tau_n/\lambda(t_n'))}=\frac{\lambda(t_n')}{\lambda(t_n)}\to0,
\end{displaymath}
a contradiction. But then, for $\tau\in\R$, $n$ large, we have that $t_n'+\tau/\lambda(t_n')\geq t_n$, so that
\begin{displaymath}
\frac{\lambda(t_n')}{\lambda(t_n'+\tau/\lambda(t_n'))}\leq\frac{\lambda(t_n')}{\lambda(t_n')}\leq1
\end{displaymath}
and hence $\widetilde\lambda(\tau)\geq1$. But then, Proposition \ref{p5.5} shows that $\widetilde v\equiv0$, but $\widetilde v\in\overline K$, a contradiction.
\end{proof}

\begin{rem}\label{r6.7}
Let $u$, $\lambda$ be as in Theorem \ref{t5.1}. Define $\lambda_1(t)=\min_{t_1\geq t}\lambda(t_1)$. Then, because of Lemma \ref{l6.6},
\begin{displaymath}
\frac{A_0}{M_0}\leq\frac{1}{M_0}\lambda(t)\leq\lambda_1(t)\leq\lambda(t),
\end{displaymath}
the set 
\begin{displaymath}
\widetilde K=\Bigg\{\left(\scl{u}{a}{\lambda_1(t)}{x}{t},\scl{\partial_tu}{a+1}{\lambda_1(t)}{x}{t}\right)\Bigg\}
\end{displaymath}
also has compact closure in $\hh$. Moreover, since by Lemma \ref{l6.5} $\lim_{t\to\infty}$ $\lambda(t)=+\infty$ and $\lambda$ is continous, it is easy to see that $\lambda_1$ is continuous and $\lim_{t\to\infty}\lambda_1(t)=+\infty$. Moreover $\lambda_1$ is non-decreasing. Choose now $t_n\uparrow+\infty$ so that 
$\lambda_1(t_n)=2^n$.
\end{rem}

\begin{lem}\label{l6.8}
Let $\{t_n\}$ be defined as in Remark \ref{r6.7}. Then, there exists $C_0>0$ so that
\begin{equation}\label{e6.9}
(t_{n+1}-t_n)\lambda_1(t_n)\leq C_0.
\end{equation}
\end{lem}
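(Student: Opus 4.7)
The plan is a proof by contradiction that mirrors the strategy already used in Lemmas \ref{l6.5} and \ref{l6.6}: assume the bound fails, extract a nondispersive limit solution defined on all of $\R$ whose scaling parameter is pinched between two positive constants, and invoke Proposition \ref{p5.5}. Suppose there is a subsequence $n_k$ with $(t_{n_k+1}-t_{n_k})\lambda_1(t_{n_k}) \to +\infty$. Let $s_k = (t_{n_k}+t_{n_k+1})/2$ and apply the argument of Lemma \ref{l6.1}, but to the compact family $\widetilde K$ of Remark \ref{r6.7} (built from $\lambda_1$) at the times $s_k$. Passing to a subsequence, we obtain a limit $(v_0,v_1)\in\overline{\widetilde K}$, a solution $v$ of \eref{w+} defined for all $\tau\in\R$, and a function $\widetilde\lambda(\tau)$ such that
\begin{displaymath}
\widetilde v(x,\tau)=\left(\scl{v}{a}{\widetilde\lambda(\tau)}{x}{\tau},\scl{\partial_\tau v}{a+1}{\widetilde\lambda(\tau)}{x}{\tau}\right)\in\overline{\widetilde K},
\end{displaymath}
with $\widetilde\lambda(\tau)^{-1}=\lim_{k}\lambda_1(s_k)/\lambda_1(s_k+\tau/\lambda_1(s_k))$.

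The key point is that $\widetilde\lambda$ is pinched. Since $\lambda_1$ is non-decreasing and continuous with $\lambda_1(t_{n_k})=2^{n_k}$, $\lambda_1(t_{n_k+1})=2^{n_k+1}$, it takes values in $[2^{n_k},2^{n_k+1}]$ on the whole interval $[t_{n_k},t_{n_k+1}]$. In particular $\lambda_1(s_k)\in[2^{n_k},2^{n_k+1}]$, and the distances from $s_k$ to each endpoint of this interval satisfy
\begin{displaymath}
(s_k-t_{n_k})\lambda_1(s_k)\geq\tfrac12(t_{n_k+1}-t_{n_k})\lambda_1(t_{n_k})\longrightarrow+\infty,
\end{displaymath}
and similarly for $(t_{n_k+1}-s_k)\lambda_1(s_k)$. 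Hence, for each fixed $\tau\in\R$ and all $k$ sufficiently large, $s_k+\tau/\lambda_1(s_k)\in[t_{n_k},t_{n_k+1}]$, so both $\lambda_1(s_k)$ and $\lambda_1(s_k+\tau/\lambda_1(s_k))$ lie in $[2^{n_k},2^{n_k+1}]$ and their ratio is in $[1/2,2]$. Therefore $\widetilde\lambda(\tau)\in[1/2,2]$ for every $\tau\in\R$.

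This contradicts Proposition \ref{p5.5}: the solution $v$ is globally defined in both time directions, $\widetilde v(\tau)$ stays in the compact set $\overline{\widetilde K}$, and $\widetilde\lambda\geq 1/2>0$. Proposition \ref{p5.5} then forces $v\equiv 0$, so $(v_0,v_1)=(0,0)$. But $(0,0)\notin\overline{\widetilde K}$: indeed, the $\hh$-norm is scale invariant, so $\widetilde u(\cdot,s_k)\to(0,0)$ would give $(u(\cdot,s_k),\partial_tu(\cdot,s_k))\to(0,0)$ in $\hh$, and Theorem \ref{t2.16} (applied near $s_k$) together with Remark \ref{r2.8} would produce $\ns{u}{[s_k,+\infty)}<\infty$, contradicting $\ns{u}{[0,+\infty)}=+\infty$.

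The main obstacle is the step asserting $\widetilde\lambda(\tau)\in[1/2,2]$ for \emph{every} $\tau\in\R$, not merely for $\tau\geq 0$ as in Lemma \ref{l6.5}; this is why we pick $s_k$ as the midpoint of $[t_{n_k},t_{n_k+1}]$, so that both forward and backward excursions of size $\tau/\lambda_1(s_k)$ remain in a single dyadic block where $\lambda_1$ oscillates by at most a factor of $2$. Everything else is a routine repetition of the extraction argument of Lemma \ref{l6.1} and Remark \ref{r6.3}, applied to the set $\widetilde K$ built from the monotone regularization $\lambda_1$.
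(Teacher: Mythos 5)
Your proof is correct and follows essentially the same route as the paper: pass to midpoints $s_k=(t_{n_k}+t_{n_{k}+1})/2$, use monotonicity of $\lambda_1$ and the dyadic normalization $\lambda_1(t_{n})=2^{n}$ to pinch the ratio $\lambda_1(s_k)/\lambda_1(s_k+\tau/\lambda_1(s_k))$ in $[1/2,2]$ for every fixed $\tau$ once the rescaled interval length blows up, extract a limit via the argument of Lemma \ref{l6.1} applied to $\overline{\widetilde K}$, and invoke Proposition \ref{p5.5} against $(0,0)\notin\overline{\widetilde K}$. The only differences are notational.
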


\begin{proof}
If not, for some subsequence $\{n_i\}_{i=1}^\infty$, we have $(t_{n_i+1}-t_{n_i})\lambda_1(t_{n_i})\to+\infty$. But then, by monotonicity of $\lambda_1$, we have that
\begin{equation}\label{e6.10}
(t_{n_i+1}-t_{n_i})\lambda_1\left(\frac{t_{n_i+1}+t_{n_i}}{2}\right)\to+\infty.
\end{equation}
Note also that by our choice of $t_n$ and monotonicity of $\lambda_1$, for all $t\in[t_n,t_{n+1}]$ we have
\begin{equation}\label{e6.11}
\frac{1}{2}\leq\frac{\lambda_1(t)}{\lambda_1\left(\frac{t_{n+1}+t_{n}}{2}\right)}\leq2.
\end{equation}
Consider now
\begin{multline*}
\left(\scl{u}{a}{\lambda_1\left(\frac{t_{n_i+1}+t_{n_i}}{2}\right)}{x}{\frac{t_{n_i+1}+t_{n_i}}{2}},\right.\\\left.
\scl{\partial_tu}{a+1}{\lambda_1\left(\frac{t_{n_i+1}+t_{n_i}}{2}\right)}{x}{\frac{t_{n_i+1}+t_{n_i}}{2}}\right)\\
\to(v_0,v_1)\in\overline{\widetilde{K}}.
\end{multline*}
(Note  that $(0,0)\not\in\overline{\widetilde{K}}$ because $\ns{u}{0,\infty}=+\infty$ and Theorem \ref{t2.16}). Apply now Lemma \ref{l6.1}. Then 
$v(\tau)$ is defined in $\R$, $\widetilde v(\tau)$ is in $\overline{\widetilde{K}}$ and
\begin{displaymath}
\frac{1}{\widetilde\lambda_1(\tau)}=\lim_{i\to\infty}
\frac{\lambda_1\left(\frac{t_{n_i+1}+t_{n_i}}{2}\right)}
{\lambda_1\left(\frac{t_{n_i+1}+t_{n_i}}{2}+\frac{\tau}{\lambda_1\left(\frac{t_{n_i+1}+t_{n_i}}{2}\right)}\right)}
\end{displaymath}
(after taking a subsequence in $i$). But, \eref{e6.10} gives us that, for a fixed $\tau$, for $i$ large we have
\begin{displaymath}
t_{n_i}\leq\frac{t_{n_i+1}+t_{n_i}}{2}+\frac{\tau}{\lambda_1\left(\frac{t_{n_i+1}+t_{n_i}}{2}\right)}\leq t_{n_i+1}.
\end{displaymath}
Thus, in light of \eref{e6.11}, $\frac{1}{2}\leq\frac{1}{\widetilde\lambda_1(\tau)}\leq2$, which by Proposition \ref{p5.5} gives $(v_0,v_1)=(0,0)$, which 
contradicts $0\not\in\overline{\widetilde{K}}$. Thus, \eref{e6.9} follows.
\end{proof}

The proof of Theorem \ref{t5.1} now follows immediately: because of \eref{e6.9} and the definition of $t_n$, $(t_{n+1}-t_n)\leq C_02^{-n}$, and
$t_n\uparrow+\infty$. But this is a contradiction, since $t_n\leq2C_0$, summing the geometric series.



\begin{thebibliography}{10}


\bibitem{BG}
H.~Bahouri and P.~G{\'e}rard.
\newblock High frequency approximation of solutions to critical nonlinear wave
  equations.
\newblock {\em Amer. J. Math.}, 121(1):131--175, 1999.

\bibitem{BSm}
H.~Bahouri and J. ~Shatah.
\newblock Decay estimates for the critical semilinear wave equation.
\newblock {\em Ann. Inst. H. Poincare Anal. Non Lineaire}, 15(6):783--789, 1998.

\bibitem{Czm}
T. ~Cazenave.
\newblock Semilinear Schršdinger equations.
\newblock  Courant Lecture Notes in Mathematics, 10, New-york,  2003.

\bibitem{CKSTTm}
J. ~Colliander, M. ~Keel, G. ~Staffilani, H. ~Takaoka, T. ~Tao.
\newblock Global well-posedness and scattering for the energy-critical nonlinear {S}chr\"odinger equation in $R^3$.
\newblock  Annals of Math.,  to appear.




\bibitem{CKM}
R.~C\^ote, C.~Kenig, and F.~Merle.
\newblock Scattering below critical energy for the radial 4{D} {Y}ang--{M}ills
  equation and for the 2{D} corotational wave map system, 2007.
\newblock preprint, arXiv:math.AP/0709.3222v1, to appear in Comm. Math. Phys.

\bibitem{DHR}
T.~Duyckaerts, J.~Holmer, and S.~Roudenko.
\newblock Scattering for the non-radial 3{D} cubic nonlinear {S}chr\"odinger
  equation, 2007.
\newblock preprint, arXiv:math.AP/0710.3630.

\bibitem{ESS}
L.~Escauriaza, G.~A. Ser{\"e}gin, and V.~{\v{S}}verak.
\newblock {$L\sb {3,\infty}$}-solutions of {N}avier-{S}tokes equations and
  backward uniqueness.
\newblock {\em Russ. Math. Surv.}, 58(2):211--250, 2003.

\bibitem{F}
D.~Foschi.
\newblock Inhomogeneous {S}trichartz estimates.
\newblock {\em J. Hyperbolic Differ. Equ.}, 2:1--24, 2005.

\bibitem{GSV}
J.~Ginibre, A.~Soffer, and G.~Velo.
\newblock The global {C}auchy problem for the critical nonlinear wave equation.
\newblock {\em J. Funct. Anal.}, 110(1):96--130, 1992.

\bibitem{GV}
J.~Ginibre and G.~Velo.
\newblock Generalized {S}trichartz inequalities for the wave equation.
\newblock {\em J. Funct. Anal.}, 133(1):50--68, 1995.



\bibitem{Gm}
M. ~Grillakis.
\newblock Regularity for the wave equation with a critical nonlinearity.
\newblock {\em Comm. Pure Appl. Math.}, 45(6):749--774, 1992.




\bibitem{H}
J.~Harmse.
\newblock On {L}ebesgue space estimates for the wave equation.
\newblock {\em Indiana Univ. Math. J.}, 30:229--248, 1990.

\bibitem{HR}
J.~Holmer and S.~Roudenko.
\newblock A sharp condition for scattering of the radial 3{D} cubic nonlinear
  {S}chr{\"o}dinger equation.
\newblock preprint, http://arxiv.org/abs/math.AP/0703235.


\bibitem{Km}
L. ~Kapitanski.
\newblock Global and unique weak solutions of nonlinear wave equations.
\newblock {\em Math. Res. Lett.}, 1(2):211--223, 1994.


\bibitem{K}
C.~Kenig.
\newblock Global well-posedness and scattering for the energy critical focusing
  non-linear {S}chr\"{o}dinger and wave equations.
\newblock Lecture Notes for a mini-course given at ``Analyse des \'equations
  aux deriv\'ees partialles'', Evian-les-bains, June 2007.

\bibitem{KM2}
C.~Kenig and F.~Merle.
\newblock Global well-posedness, scattering and blow-up for the energy critical
  focusing non-linear wave equation.
\newblock {\em to appear, Acta Math.}

\bibitem{KM3}
C.~Kenig and F.~Merle.
\newblock Scattering for $\dot{H}^{1/2}$ bounded solutions to the cubic
  defocusing {NLS} in 3 dimensions.
\newblock {\em to appear, Trans. Amer. Math. Soc.}

\bibitem{KM1}
C.~Kenig and F.~Merle.
\newblock Global well-posedness, scattering and blow-up for the
  energy-critical, focusing, non-linear {S}chr\"{o}dinger equation in the
  radial case.
\newblock {\em Invent. Math.}, 166(3):645--675, 2006.

\bibitem{KPV}
C.~Kenig, G.~Ponce, and L.~Vega.
\newblock Well-posedness and scattering results for the generalized
  {K}orteweg-de {V}ries equation via the contraction principle.
\newblock {\em Comm. Pure Appl. Math.}, 46(4):527--620, 1993.

\bibitem{Ke}
S.~Keraani.
\newblock On the defect of compactness for the {S}trichartz estimates of the
  {S}chr\"odinger equations.
\newblock {\em J. Differential Equations}, 175(2):353--392, 2001.

\bibitem{KTV}
R.~Killip, T.~Tao, and M.~Vi\c{s}an.
\newblock The cubic nonlinear {S}chr\"odinger equation in two dimensions with
  radial data, 2007.
\newblock preprint, arXiv:math.AP/0707.3188.

\bibitem{KV}
R.~Killip and M.~Vi\c{s}an.
\newblock The focusing energy-critical nonlinear {S}chr{\"o}dinger equation in
  dimensions five and higher, 2008.
\newblock preprint, arXiv:math.AP/0804.1018.

\bibitem{KVZ}
R.~Killip, M.~Vi\c{s}an, and X.~Zhang.
\newblock The mass-critical nonlinear {S}chr\"{o}dinger equation with radial
  data in dimensions three and higher, 2007.
\newblock preprint, arXiv:math.AP/0708.0849.

\bibitem{MM1}
Y.~Martel and F.~Merle.
\newblock Blow-up in finite time and dynamics of blow-up solutions for the
  {$L^2$} critical generalized {K}d{V} equation.
\newblock {\em J. Amer. Math. Soc.}, 15:617--664, 2002.

\bibitem{MM2}
Y.~Martel and F.~Merle.
\newblock Stability of blow-up profile and lower bounds for blow-up rate for
  the critical generalized {K}d{V} equation.
\newblock {\em Ann. of Math.}, 155:235--280, 2002.

\bibitem{MV}
F.~Merle and L.~Vega.
\newblock Compactness at blow-up time for {$L\sp 2$} solutions of the critical
  nonlinear {S}chr\"odinger equation in 2{D}.
\newblock {\em Internat. Math. Res. Notices}, (8):399--425, 1998.


\bibitem{Mm}
C.S.~Morawetz.
\newblock The decay of solutions of the exterior initial-boundary value problem for the wave equation.
\newblock {\em Comm. Pure Appl. Math.}, 14:561--568, 1961.

\bibitem{O}
D.M. Oberlin.
\newblock Convolution estimates for some distributions with singularities on
  the light cone.
\newblock {\em Duke Math. J.}, 59:747--757, 1989.


\bibitem{Pm}
H.~Pecher.
\newblock Nonlinear small data scattering for the wave and Klein Gordon equation.
\newblock {\em  Math. Z.}, 185(2):261--270, 1984.


\bibitem{SS}
J.~Shatah and M.~Struwe.
\newblock {\em Geometric wave equations}, volume~2 of {\em Courant Lecture
  Notes in Mathematics}.
\newblock New York University Courant Institute of Mathematical Sciences, New
  York, 1998.
  

 \bibitem{SS2m}
J.~Shatah and M.~Struwe.
\newblock Regularity results for nonlinear wave equations.
\newblock {\em  Ann. of Math.}, (138):503--518, 1993.


\bibitem{SS3m}
J.~Shatah and M.~Struwe.
\newblock Well-posedness in the energy space for semilinear wave equations with critical growth.
\newblock {\em Internat. Math. Res. Notices}, (7):303--309, 1994.


 \bibitem{Sm}
M.~Struwe.
\newblock  Globally regular solutions to the $u\sp 5$ Klein-Gordon equation.
\newblock {\em Ann. Scuola Norm. Sup. Pisa Cl. Sci.}, (15):495--513, 1988.

 

\bibitem{Ta}
R.J. Taggart.
\newblock Inhomogeneous {S}trichartz estimates, 2008.
\newblock preprint, arXiv:math.AP/0802.4120.

\bibitem{T1}
T.~Tao.
\newblock Global regularity of wave maps {III}. {L}arge energy from
  {$\mathbb{R}^{1+2}$} to hyperbolic spaces, 2008.
\newblock preprint, arXiv:math.AP/0805.4666.

\bibitem{T2}
T.~Tao.
\newblock Global regularity of wave maps {IV}. {A}bsence of stationary or
  self-similar solutions in the energy class, 2008.
\newblock preprint, arXiv:math.AP/0806.3592.


\bibitem{TVm}
T.~Tao, M.~Vi\c{s}an.
\newblock Stability of energy-critical nonlinear {S}chr\"odinger  equations in high dimensions.
\newblock {\em Duke Math. J.}, 118:28 pages, 2005.




\bibitem{TVZ}
T.~Tao, M.~Vi\c{s}an, and X.~Zhang.
\newblock Global well-posedness and scattering for the defocusing mass-critical
  nonlinear {S}chr\"odinger equation for radial data in high dimensions.
\newblock {\em Duke Math. J.}, 140(1):165--202, 2007.

\bibitem{V}
M.C. Vilela.
\newblock Inhomogeneous {S}trichartz estimates for the {S}chr{\"o}dinger
  equation.
\newblock {\em Trans. Amer. Math. Soc.}, 359:2123--2136, 2007.


























\end{thebibliography}

\end{document}